\newtheorem{theorem}{Theorem}
\newtheorem{corollary}[theorem]{Corollary}
\newtheorem{definition}[theorem]{Definition}
\newtheorem{lemma}[theorem]{Lemma}
\newtheorem{proposition}[theorem]{Proposition}
\newtheorem{remark}[theorem]{Remark}
\begin{document}

\title[An alternative definition of tense operators]
{An alternative definition of tense operators on residuated lattices}

\address{CIC and NUCOMPA, Facultad de Ciencias Exactas, Universidad Nacional del Centro, Pinto 399, 7000 Tandil, Argentina.}
\email{ismaelcalomino@gmail.com}
\author{Ismael Calomino}

\address{CONICET and Instituto de Ciencias B\'{a}sicas, Universidad Nacional de San Juan, 5400 San Juan, Argentina.}
\email{gpelaitay@gmail.com}
\author{Gustavo Pelaitay}

\address{CONICET and Departamento de Matem\'{a}tica, Facultad de Ciencias Exactas, Universidad Nacional del Centro, Pinto 399, 7000 Tandil, Argentina.}
\email{wizubo@gmail.com}
\author{William Zuluaga Botero}

\subjclass[2010]{Primary 03B44; Secondary 06D20, 06D50}
\keywords{Residuated lattice, tense operator, Kalman's construction}
\thanks{This work was supported by Consejo Nacional de Investigaciones Cient\'{i}ficas y T\'{e}cnicas (PIP 11220170100195CO and PIP 11220200100912CO, CONICET-Argentina) and Agencia Nacional de Promoci\'{o}n Cient\'{i}fica y Tecnol\'{o}gica (PICT2019-2019-00882, ANPCyT-Argentina). This project has also received funding from MOSAIC Project 101007627 (European Union's Horizon 2020 research and innovation programme under the Marie Sklodowska-Curie).}

\begin{abstract}In this paper we introduce and study an alternative definition of tense operators on residuated lattices. We give a categorical equivalence for the class of tense residuated lattices, which is motivated by an old construction due to J. Kalman. The paper concludes with some applications regarding the description of congruences and a 2-contextual translation. 
\end{abstract}

\maketitle

\section{Introduction}

Classical tense logic is a logical system obtained from introducing within the classical propositional logic the notion of time, i.e., an expansion of propositional logic by new unary operators which are called tense operators. It is customary to denote these  operators by $G$, $H$, $F$ and $P$, and usually we define $F$ and $P$ via $G$ and $H$ as $F(x) = \neg G(\neg x)$ and $P(x) = \neg H(\neg x)$, where $\neg x$ denote the Boolean negation of $x$. It is well known that the class of tense Boolean algebras provides an algebraic semantics for classical tense logic \cite{Burgess}. A {\it{tense Boolean algebra}} is a structure $\langle B, G, H \rangle$ such that $B$ is a Boolean algebra and $G$ and $H$ are unary operators on $B$ satisfying the following conditions:
\begin{enumerate}
\item[(TB1)] $G(1)=1$ and $H(1)=1$,
\item[(TB2)] $G(x \wedge y) = G(x) \wedge G(y)$ and $H(x \wedge y) = H(x) \wedge H(y)$,
\item[(TB3)] $x \leq GP(x)$ and $x \leq HF(x)$. 
\end{enumerate}
Note that from conditions (TB1) and (TB2) it is the case that both operators $G$ and $H$ are in fact, certain kind of modal operators, so the class of tense Boolean algebras is, in particular, a class of Boolean algebras with operators. Such class of alebras was studied in detail by Jónsson and Tarski in \cite{JonTar}.

Tense operators in intuitionistic logic were introduced by Ewald in \cite{Ewald}. In this paper it was established the corresponding intuitionistic tense logical system, called IKt. The axiomatization of Ewald is not minimal and in contrast to classical tense logic, the tense operators $F$ and $P$ cannot be defined in terms of $G$ and $H$. In \cite{FigPel14}, Figallo and Pelaitay gave an algebraic axiomatization of the IKt system and showed that the algebraic axiomatization given by Chajda in \cite{Chajda11} of the tense operators $P$ and $F$ in intuitionistic logic is not in accordance with the Halmos approach to existential operators \cite{Halmos}. Thereafter, the study of tense operators has been extended to different algebraic structures associated with non-classical logics \cite{Chiri,DiaGeor,ChajdaPaseka}. This can be evidenced by the approach applied in  \cite{FigPel14,CasCeSan}, and also in \cite{PelZu23}, in where a Kalman's construction for the class of tense distributive lattices with implication was studied.

Following the methodology proposed in \cite{Halmos,Chajda11,ChajdaPasekabook, Bak} and \cite{Paad}, the research on algebraic properties of tense operators have been taken into the context of residuated lattices. Motivated by the results presented in \cite{FigPel14,PelZu23}, the aim of this paper is to propose an alternative definition of tense operators on residuated lattices different from the one presented in \cite{Bak}. We also establish a categorical equivalence through an adaptation of the Kalman's construction developed in \cite{PelZu23} and moreover, we present some applications. 

The paper is organized as follows. In Section \ref{sec2} we review some results on integral commutative residuated distributive lattices, or ICRDL-algebras for short. In Section \ref{sec3} we present a definition of tense operators on ICRDL-algebras, unalike from the one  depicted in \cite{Bak}, and we prove a Glivenko style theorem for ICRDL-algebras as a generalization of similar results for Heyting algebras. In Section \ref{sec4}, and following the results given in \cite{CMS,CLS}, we introduce the class of tense c-differential residuated lattices, or tense DRL-algebras for short. In Section \ref{sec5}, we extend the Kalman's construction to establish a categorical equivalence between the category of tense ICRDL-algebras and the category of tense DRL-algebras. In Section \ref{sec6}, we prove that there is an isomorphism between the congruence lattice of a tense ICRDL-algebra $\mathbf{L}$ and the congruence lattice of the tense ICRDL-algebra arising from the Kalman functor $K(\mathbf{L})$. Furthermore, as an application, we see that there is a correspondence between tense filters of the classes of tense ICRDL-algebras and tense DRL-algebras. Finally, in Section \ref{sec7} we apply the results given in Section \ref{sec6} to obtain a nontrivial finite 2-contextual translation between the equational consequence relations relative to tense DRL-algebras and tense ICRDL-algebras, respectively. The reader is assumed to be familiar with standard results about adjoint functors as presented in \cite{McL1978}.

\section{Preliminaries} \label{sec2}

In this section we recall some results that will be useful through this paper. For more details on residuated lattices the reader can see \cite{GaJiKoOno}.

\begin{definition} \cite{GaJiKoOno} 
An integral commutative residuated distributive lattice, or ICRDL-algebra for short, is a structure $\langle L, \vee, \wedge,\cdot, \to, 0, 1 \rangle$ of type $(2,2,2,2,0,0)$such that:
\begin{itemize}
\item [(R1)] $\langle L, \vee, \wedge, 0, 1 \rangle$ is a bounded distributive lattice,
\item [(R2)] $\langle L, \cdot, 1 \rangle$ is a commutative monoid, 
\item [(R3)] $x \cdot y \leq z$ if and only if $x \leq y \to z$, for all $x,y,z \in L$.
\end{itemize}
As usual, in what follows, we will denote ICRDL-algebras by their underlying set $L$, when it is clear from the context.
\end{definition}

\begin{proposition} \cite{GaJiKoOno}
Let $L$ be a ICRDL-algebra. Then, the following hold:
\begin{enumerate}
\item $1 \to x = x$,
\item $x \cdot y \leq x, y$,
\item $y \leq x \to y$,
\item $x \leq y$ if and only if $x \to y=1$,
\item $x \to y \leq (x \cdot z) \to (y \cdot z)$,
\item $x \leq y$ implies $x \cdot z \leq y \cdot z$,
\item $x \leq y$ implies $z \to x \leq z \to y$ and $y \to z \leq x \to z$,
\item $x \to (y \to z) = (x \cdot y) \to z$,
\item $(x\vee y) \rightarrow z = (x \rightarrow z) \wedge (y\rightarrow z)$.
\end{enumerate}
\end{proposition}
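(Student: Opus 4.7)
The engine driving all nine items is the residuation adjunction (R3), $x \cdot y \leq z$ iff $x \leq y \to z$, together with integrality (the monoid unit $1$ is the top of the lattice). The plan is to first derive the one-step identities whose proofs amount to plugging an expression into its own defining adjunction, and then bootstrap them into the more structural inequalities.

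I would begin with item (4): $x \leq y$ iff $1 \cdot x \leq y$ iff $1 \leq x \to y$ iff $x \to y = 1$, using that $1$ is the top. Item (1) follows because $y \leq 1 \to x$ iff $y \cdot 1 \leq x$ iff $y \leq x$, so $1 \to x$ and $x$ cut out the same principal down-set. For item (2), apply (R3) to $1 \leq y \to y$ (which is item (4) applied to $y \leq y$) to obtain $x \cdot y \leq y$ for every $x$, and commutativity delivers $x \cdot y \leq x$ as well. Item (3) is the same identity read through (R3) in the other direction: $y \cdot x \leq y$ rewrites as $y \leq x \to y$.

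From these, the monotonicity items are immediate. For (6), given $x \leq y$, apply (R3) to $y \cdot z \leq y \cdot z$ to obtain $y \leq z \to (y \cdot z)$; transitivity then yields $x \leq z \to (y \cdot z)$ and (R3) gives $x \cdot z \leq y \cdot z$. For (5), evaluate (R3) at $x \to y \leq x \to y$ to obtain the canonical evaluation $(x \to y) \cdot x \leq y$; multiplying by $z$ via (6) and applying (R3) yields $x \to y \leq (x \cdot z) \to (y \cdot z)$. Item (7) splits similarly: $(z \to x) \cdot z \leq x \leq y$ gives the first half, while $(y \to z) \cdot x \leq (y \to z) \cdot y \leq z$ gives the second. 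Item (8) is a pure adjunction argument: for any $a$, $a \leq x \to (y \to z)$ iff $a \cdot x \leq y \to z$ iff $(a \cdot x) \cdot y \leq z$ iff $a \cdot (x \cdot y) \leq z$ (by associativity) iff $a \leq (x \cdot y) \to z$, so by antisymmetry the two sides agree.

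The only item where I expect any friction is (9). The inequality $(x \vee y) \to z \leq (x \to z) \wedge (y \to z)$ is immediate from item (7) applied to $x, y \leq x \vee y$. The reverse direction requires the key fact that the monoid product distributes over binary joins, $a \cdot (x \vee y) = (a \cdot x) \vee (a \cdot y)$; this is not listed among the axioms but is forced by the adjunction, since $a \cdot {-}$ is a left adjoint in each argument and hence preserves all existing joins. Granting this, set $a = (x \to z) \wedge (y \to z)$; then $a \cdot x, a \cdot y \leq z$, so $a \cdot (x \vee y) \leq z$, and (R3) concludes $a \leq (x \vee y) \to z$. The main conceptual step throughout is recognizing the adjointness pattern; the rest is routine rewriting.
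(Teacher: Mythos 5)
Your proof is correct: every item is derived cleanly from the residuation adjunction (R3) together with integrality, commutativity, and the observation that $x \cdot {-}$, being a left adjoint, preserves joins (which settles the only delicate direction of item (9)). The paper itself gives no proof, citing \cite{GaJiKoOno}, and your argument is precisely the standard one found there, so there is nothing to reconcile.
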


We recall that in every ICRDL-algebra, we can define a unary operation $\neg$ as $\neg x := x \to 0$. The subsequent proposition summarizes the properties of this operation.

\begin{proposition}  \label{prop_neg}
Let $L$ be a ICRDL-algebra. Then, the following hold:
\begin{enumerate}
\item $x \leq  y$ implies $\neg y \leq \neg x$,
\item $x \leq \neg \neg x$,
\item $\neg \neg \neg x = \neg x$,
\item $\neg 1 = 0$ and $\neg 0 =1$,
\item $x \cdot \neg x = 0$,
\item $x \to y \leq \neg y \to \neg x$,
\item $\neg (x \vee y) = \neg (\neg \neg x \vee \neg \neg y) = \neg x \wedge \neg y$,
\item $\neg \neg (x \wedge y) = \neg \neg x \wedge \neg \neg y$.
\end{enumerate}
\end{proposition}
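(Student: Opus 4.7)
The plan is to derive the eight items by a combination of residuation (R3) and the preceding proposition, proceeding in an order that lets each item exploit the ones already established.

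Items (1) and (4) are immediate: (1) is antitonicity of $\to$ in its first argument (item (7) of the previous proposition) specialized to $z=0$, and (4) unfolds as $\neg 1 = 1 \to 0 = 0$ and $\neg 0 = 0 \to 0 = 1$ via items (1) and (4) there. The cornerstone is (5): applying (R3) to the tautology $\neg x \leq \neg x$ yields $\neg x \cdot x \leq 0$, so $x \cdot \neg x = 0$ by commutativity and integrality. Item (2) is then the converse residuation of (5): from $x \cdot \neg x \leq 0$ one gets $x \leq \neg x \to 0 = \neg\neg x$. For (3) one combines (1) and (2): (1) applied to (2) gives $\neg\neg\neg x \leq \neg x$, while (2) with $\neg x$ in place of $x$ gives the opposite inequality.

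I would prove (6) by two passes through (R3): the inequality $x \to y \leq \neg y \to \neg x$ rephrases as $(x \to y) \cdot \neg y \cdot x \leq 0$, and since $x \cdot (x \to y) \leq y$, this is bounded above by $y \cdot \neg y = 0$ by (5). For (7), the outer equality is item (9) of the preceding proposition applied with $z = 0$, giving $\neg(x \vee y) = (x \to 0) \wedge (y \to 0) = \neg x \wedge \neg y$. The middle equality then comes by applying the same identity to $\neg\neg x \vee \neg\neg y$ and collapsing $\neg\neg\neg = \neg$ via (3).

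The real obstacle is (8). The direction $\neg\neg(x \wedge y) \leq \neg\neg x \wedge \neg\neg y$ is immediate from iterated monotonicity (apply (1) twice to $x \wedge y \leq x$ and $x \wedge y \leq y$, then take meets). For the reverse inclusion, the plan is to establish first the nucleus-type bound $\neg\neg x \cdot \neg\neg y \leq \neg\neg(x \wedge y)$: starting from $x \cdot y \cdot \neg(x \wedge y) \leq (x \wedge y) \cdot \neg(x \wedge y) = 0$ and iterating (R3) together with (5) to push $\neg\neg$ past each argument in turn. The delicate step is then to promote this from the product $\neg\neg x \cdot \neg\neg y$ to the lattice meet $\neg\neg x \wedge \neg\neg y$ on the left; this is where one must combine the distributivity of the underlying lattice with the structure of the regular elements $\{z : \neg\neg z = z\}$ produced by (3), and is the main place where real care is required.
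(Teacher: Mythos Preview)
The paper does not prove this proposition at all; it is stated as a well-known collection of facts about residuated lattices (implicitly referring to \cite{GaJiKoOno}). So there is no ``paper's approach'' to compare yours against, and your arguments for items (1)--(7) are correct and standard.

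Item (8), however, has a genuine gap. Your intermediate bound $\neg\neg x \cdot \neg\neg y \leq \neg\neg(x\wedge y)$ is indeed provable by the residuation argument you sketch, but the ``promotion'' step is in the wrong direction: in any integral residuated lattice one has $a\cdot b \leq a\wedge b$, so $\neg\neg x \cdot \neg\neg y \leq \neg\neg x \wedge \neg\neg y$. Knowing that the \emph{smaller} quantity $\neg\neg x \cdot \neg\neg y$ lies below $\neg\neg(x\wedge y)$ says nothing about whether the \emph{larger} quantity $\neg\neg x \wedge \neg\neg y$ does. Your gesture toward ``distributivity of the underlying lattice'' and ``structure of the regular elements'' does not constitute an argument, and in fact neither ingredient by itself bridges this gap: closure of $Reg(L)$ under $\wedge$ already follows from the easy direction and (2), and lattice distributivity does not force $\cdot$ to distribute over $\wedge$.

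A route that does work avoids the product detour entirely. First observe (using (7)) that $\neg\neg x \wedge \neg\neg y = \neg(\neg x \vee \neg y)$, so the hard direction is equivalent to $\neg(x\wedge y)\leq \neg\neg(\neg x\vee \neg y)$, i.e.\ to $\neg(x\wedge y)\cdot \neg(\neg x\vee\neg y)=0$. In the Heyting case ($\cdot=\wedge$) your own nucleus argument, applied with $\wedge$ in place of $\cdot$, already gives exactly this: from $(x\wedge y)\cdot\neg(x\wedge y)=0$ one residuation at a time replaces $x$ by $\neg\neg x$ and then $y$ by $\neg\neg y$. In the general ICRDL setting the same two-step replacement still works once you establish the auxiliary identity $\neg(x\wedge y)=\neg(\neg\neg x\wedge y)$; this is the step that actually uses the interaction between $\wedge$ and the residuation, and it is what your sketch is missing.
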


If $L$ is a ICRDL-algebra, then the set of regular elements of $L$ is $Reg(L) = \{ \neg \neg a \colon a \in A \}$. For any $\star \in \{ \vee, \wedge, \cdot, \to \}$, we consider the operation $x \star_{r} y := \neg \neg (x \star y)$. So, the structure $\langle Reg(L), \vee_{r}, \wedge_{r}, \cdot_{r}, \to_{r}, 0, 1 \rangle$ is a ICRDL-algebra. Note that $a \to_{r} b = a \to b$ and $a \wedge_{r} b = a \wedge b$, for all $a,b \in Reg(L)$. However, $\vee_{r}$ and $\cdot_{r}$ are different, in general, from $\vee$ and $\cdot$, respectively, and so $Reg(L)$ may not be a subalgebra of $L$. Nevertheless, from the results given in \cite{CignoliTorrens}, in some cases $Reg(L)$ can be obtained as a homomorphic image of $L$.

\begin{lemma} \cite{CignoliTorrens}
Let $L$ be a ICRDL-algebra. Then, the following are equivalent:
\begin{enumerate}
\item The correspondence $x \mapsto \neg \neg x$ is a homomorphism of $L$ onto $Reg(L)$,
\item $L$ satisfies the equation 
\[
\neg \neg (\neg \neg x \to x) = 1.
\]
\end{enumerate}
\end{lemma}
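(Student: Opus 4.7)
The plan is to establish the two implications separately. The direction (1)$\Rightarrow$(2) is immediate: applying the hypothesised homomorphism to $\neg\neg x \to x$ and using Proposition~\ref{prop_neg}(3) to recognise that $\neg\neg(\neg\neg x)=\neg\neg x$, one obtains $\neg\neg(\neg\neg x \to x) = \neg\neg x \to_{r} \neg\neg x = 1$.

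For the converse, the image of $\neg\neg$ is $Reg(L)$ by definition, so surjectivity is automatic, and the task reduces to verifying that $\neg\neg$ preserves each operation of the ICRDL signature. The cases of $0$, $1$ and $\wedge$ are covered directly by Proposition~\ref{prop_neg}(4) and (8) (together with $\wedge_{r}=\wedge$), while $\vee$ follows by double-negating the identity in Proposition~\ref{prop_neg}(7); none of these steps invokes hypothesis (2). The heart of the argument, and the only place where (2) intervenes, is the preservation of $\cdot$ and $\to$. Both cases rest on a single observation: writing $d = \neg\neg x \to x$, hypothesis (2) says $\neg\neg d = 1$, which by the order characterisation $u \leq v$ iff $u \to v = 1$ forces $\neg d = 0$; hence for every $a$ one has $\neg(a \cdot d) = a \to \neg d = \neg a$, and so $\neg\neg(a \cdot d) = \neg\neg a$. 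In other words, multiplication by elements of the form $\neg\neg x \to x$ is invisible to $\neg\neg$.

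With this ``invisibility of dense factors'' in hand, the adjunction (R3) closes both remaining cases. For $\cdot$, the easy inequality $\neg\neg(x \cdot y) \leq \neg\neg(\neg\neg x \cdot \neg\neg y)$ comes from $x \leq \neg\neg x$; the reverse is obtained by multiplying $(\neg\neg x \to x)\cdot\neg\neg x \leq x$ with its analogue for $y$ and collapsing the two dense factors under $\neg\neg$. For $\to$, Proposition~\ref{prop_neg}(6) applied twice yields $x \to y \leq \neg\neg x \to \neg\neg y$, whence one inequality, and the reverse follows from $(\neg\neg y \to y)\cdot(\neg\neg x \to \neg\neg y)\cdot \neg\neg x \leq y$ after applying $\neg\neg$ and invoking the invisibility of $\neg\neg y \to y$. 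The main obstacle is identifying the correct dense witness to insert so that the residuated adjunction produces an inequality inside $\neg\neg$ which the invisibility lemma then wipes clean; everything else reduces to routine applications of (R3) and Proposition~\ref{prop_neg}.
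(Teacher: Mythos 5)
The paper does not actually prove this lemma: it is imported verbatim from Cignoli--Torrens \cite{CignoliTorrens}, so there is no in-paper argument to compare against. Judged on its own, your proof is correct and self-contained. The direction $(1)\Rightarrow(2)$ is fine, and your ``invisibility'' observation is sound: from $\neg\neg d=1$ one gets $\neg d=0$, and then $\neg(a\cdot d)=a\to\neg d=\neg a$ by the currying identity $x\to(y\to z)=(x\cdot y)\to z$, whence $\neg\neg(a\cdot d)=\neg\neg a$. The treatment of $0$, $1$, $\wedge$, $\vee$ via Proposition~\ref{prop_neg} is right, as is the product case: multiplying $(\neg\neg x\to x)\cdot\neg\neg x\le x$ with its $y$-analogue and collapsing the two dense factors under $\neg\neg$ gives the nontrivial inequality. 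The only place where the write-up is compressed enough to mislead is the implication case: applying $\neg\neg$ directly to $(\neg\neg y\to y)\cdot(\neg\neg x\to\neg\neg y)\cdot\neg\neg x\le y$ and then erasing the dense factor yields $\neg\neg\bigl((\neg\neg x\to\neg\neg y)\cdot\neg\neg x\bigr)\le\neg\neg y$, which is not yet the target. One must first residuate $\neg\neg x$ out and weaken, i.e.\ pass to $(\neg\neg y\to y)\cdot(\neg\neg x\to\neg\neg y)\le\neg\neg x\to y\le x\to y$ using $x\le\neg\neg x$, and only then apply $\neg\neg$ and invisibility to obtain $\neg\neg(\neg\neg x\to\neg\neg y)\le\neg\neg(x\to y)$. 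With that routine intermediate step made explicit, the argument is complete and matches the standard Glivenko-style proof one finds in \cite{CignoliTorrens}.
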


Recall that a pseudocomplemented ICRDL-algebra is a ICRDL-algebra $L$ that satisfies the equation
\[
x \wedge \neg x = 0.
\]

\begin{theorem} \cite{Cignoli08,CastaDiazTorrens} \label{R_boolean}
Let $L$ be a ICRDL-algebra. Then $L$ is pseudocomplemented if and only if $Reg(L)$ is a Boolean algebra.
\end{theorem}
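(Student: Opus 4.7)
The plan is to prove both directions by exhibiting, or reconstructing, the complement of an arbitrary regular element. The main tools will be Proposition \ref{prop_neg}, especially the De Morgan identity $\neg(a \vee b) = \neg a \wedge \neg b$ and the fact that $\neg$ sends $Reg(L)$ to itself (since $\neg(\neg\neg a) = \neg a = \neg\neg\neg a$).

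For the forward direction, assume $L$ satisfies $x \wedge \neg x = 0$. Since $Reg(L)$ is already a bounded distributive lattice (with the induced meet $\wedge_{r} = \wedge$ and the possibly different join $\vee_{r}$), to show it is Boolean it is enough to exhibit a complement of each $a \in Reg(L)$, and I claim $\neg a$ does the job. Indeed, $a \wedge_{r} \neg a = a \wedge \neg a = 0$ by pseudocomplementation, while
\[
a \vee_{r} \neg a \;=\; \neg\neg(a \vee \neg a) \;=\; \neg(\neg a \wedge \neg\neg a) \;=\; \neg(\neg a \wedge a) \;=\; \neg 0 \;=\; 1,
\]
using Proposition \ref{prop_neg}(7), the regularity of $a$, and pseudocomplementation again.

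For the converse, assume $Reg(L)$ is Boolean and fix $x \in L$. Set $a = \neg\neg x \in Reg(L)$ and let $b \in Reg(L)$ be the (Boolean) complement of $a$, so that $a \wedge b = 0$ and $a \vee_{r} b = 1$. The idea is to show that $\neg a$ is forced to lie below $\neg b$ and simultaneously have zero meet with $\neg b$, which collapses $a \wedge \neg a$ to $0$. From $a \cdot b \leq a \wedge b = 0$ and residuation we get $b \leq \neg a$; applying the antitone $\neg$ (Proposition \ref{prop_neg}(1)) and the regularity of $a$ yields $a = \neg\neg a \leq \neg b$. On the other hand, $a \vee_{r} b = 1$ unfolds as $\neg\neg(a \vee b) = 1$, hence $\neg(a \vee b) = 0$, and Proposition \ref{prop_neg}(7) turns this into $\neg a \wedge \neg b = 0$. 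Combining, $a \wedge \neg a \leq \neg b \wedge \neg a = 0$, and since $x \leq \neg\neg x = a$ and $\neg a = \neg\neg\neg x = \neg x$ we conclude $x \wedge \neg x \leq a \wedge \neg a = 0$.

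The step I expect to be the main obstacle is the converse direction: one would like to simply quote "the complement must be $\neg a$" in a Boolean algebra, but uniqueness of complements cannot be invoked until we already know $a \wedge \neg a = 0$, which is what we are trying to prove. The trick outlined above sidesteps this circularity by using only the abstract complement $b$, together with residuation and the De Morgan identity, to trap $\neg a$ between $a$ and an element disjoint from it.
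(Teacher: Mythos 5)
Your proof is correct. Note that the paper does not prove this statement at all---it is quoted from \cite{Cignoli08,CastaDiazTorrens}---so there is no in-paper argument to compare against; your self-contained proof (complementing a regular element by $\neg a$ in the forward direction, and in the converse trapping $\neg a$ via the abstract complement $b$, residuation, and the De Morgan law $\neg(a\vee b)=\neg a\wedge\neg b$) checks out step by step against Proposition \ref{prop_neg} and is a legitimate direct verification of the cited result.
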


\section{Tense ICRDL-algebras} \label{sec3}

In this section, we present an alternative definition of tense operators in ICRDL-algebras. We stress that such a definition differs from the one proposed by Barkhshi in \cite{Bak}.

\subsection{Definition and properties}

In \cite{PelZu23}, tense operators were defined on Heyting algebras and distributive lattices with implication. Given that every Heyting algebra is, in particular, an ICRDL-algebra, it is natural to provide the following definition.

\begin{definition} \label{def2} 
Let $L$ be a ICRDL-algebra. Let $G$, $H$, $F$ and $P$ be unary operations on $L$ satisfying:
\begin{itemize}
\item [(T1)] $P(x) \leq y$ if and only if $x \leq G(y)$,
\item [(T2)] $F(x) \leq y$ if and only if $x \leq H(y)$,
\item [(T3)] $G(0)=0$ and $H(0)=0$,
\item [(T4)] $G(x) \cdot F(y) \leq F(x \cdot y)$ and $H(x) \cdot P(y) \leq P(x \cdot y)$,
\item [(T5)] $G(x \vee y) \leq G(x) \vee F(y)$ and $H(x \vee y) \leq H(x) \vee P(y)$,
\item [(T6)] $G(x \to y) \leq G(x) \to G(y)$ and $H(x \to y) \leq H(x) \to H(y)$. 
\end{itemize}
An algebra ${\mathbf{L}} = \langle L, G, H, F, P \rangle$ will be called tense ICRDL-algebra and $G$,$H$, $F$ and $P$ will be called tense operators.
\end{definition}

\begin{remark} Note that any tense Heyting algebra $\langle A, G, H, F, P\rangle$ (refer to Definition 8.2 in \cite{PelZu23}) is also a tense ICRDL-algebra.
\end{remark}

In the following remarks, we can observe that Definition \ref{def2} and the one proposed by Bakhshi in \cite{Bak} are not necessarily related.

\begin{remark} \label{rem1}

In \cite{Bak}, the author introduces tense operators in the class of residuated lattices as follows: a structure $\langle L, G, H \rangle$ is a tense residuated lattice if $L$ is a residuated lattice, and $G$ and $H$ are unary operations on $L$ satisfying
\begin{enumerate}
\item[(TRL0)] $G(1)=1$ and $H(1)=1$,
\item[(TRL1)] $G(x \to y) \leq G(x) \to G(y)$ and $H(x \to y) \leq H(x) \to H(y)$,
\item[(TRL2)] $x \leq GP(x)$ and $x \leq HF(x)$,
\end{enumerate}
where $F(x) = \neg G( \neg x)$ and $P(x) = \neg H (\neg x)$. If we consider the following bounded distributive lattice
\begin{center}
\begin{tikzpicture}
\filldraw [thick]
(0,1) circle (2pt) node[left] {$b$} --
(1,0) circle (2pt) node[right] {$0$}     -- 
(1,0)--(2,1) circle (2pt) node[right] {$a$} --
(1,2) circle (2pt) node[left] {$c$}--(0,1)
(2,1)--(3,2)circle (2pt) node[right] {$d$}--
(2,3)circle (2pt) node[left]{$1$} -- (1,2);
\end{tikzpicture}
\end{center}
and the operations $\cdot = \wedge$,   
\begin{center}
\begin{tabular}{c|cccccc}
$\to$ & $0$ & $a$ & $b$ & $c$ & $d$ & $1$\tabularnewline
\hline 
$0$ &   $1$   & $1$       & $1$    & $1$ & $1$ & $1$\tabularnewline
$a$ &   $b$   & $1$       & $b$    & $1$ & $1$ & $1$\tabularnewline
$b$ &   $d$   & $d$       & $1$    & $1$ & $d$ & $1$\tabularnewline
$c$ &   $0$   & $d$       & $b$    & $1$ & $d$ & $1$\tabularnewline
$d$ &   $b$   & $c$       & $b$    & $c$ & $1$ & $1$\tabularnewline
$1$ &   $0$   & $a$      & $b$     & $c$ & $d$ & $1$\tabularnewline
\end{tabular}
\hspace{0.5cm}
\begin{tabular}{c|cccccc}
$x$ & $0$ & $a$ & $b$ & $c$ & $d$ & $1$\tabularnewline
\hline 
$G(x)$ &   $0$   & $a$       & $0$    & $c$ & $d$ & $1$\tabularnewline
$H(x)$ &   $0$   & $0$       & $b$    & $c$ & $0$ & $1$\tabularnewline
$F(x)$ &   $0$   & $c$       & $b$    & $c$ & $1$ & $1$\tabularnewline
$P(x)$ &   $0$   & $a$       & $b$    & $c$ & $d$ & $1$\tabularnewline
\end{tabular}
\end{center}
then it is easy to check that $\langle L, G, H, F, P \rangle$ is a tense ICRDL-algebra. However, it is not a tense residuated lattice in the sense of \cite{Bak} since
\[
F(a) = c \neq 1 = \neg G(\neg a),
\] 
and  
\[
P(c) = c \neq 1 = \neg H(\neg c).
\]
\end{remark}

\begin{remark} \label{rem2}
Consider the chain of four elements $L = \{ 0, a, b, 1 \}$, where $0 < a < b < 1$, along with the binary operations $\cdot$ and $\to$ defined by the following tables: 
\begin{center}
\begin{tabular}{c|cccc}
$.$ & $0$ & $a$ & $b$ & $1$ \tabularnewline
\hline 
$0$ &   $0$   & $0$       & $0$    & $0$ \tabularnewline
$a$ &   $0$   & $0$       & $0$    & $a$ \tabularnewline
$b$ &   $0$   & $0$       & $a$    & $b$ \tabularnewline
$1$ &   $0$   & $a$       & $b$    & $1$ \tabularnewline
\end{tabular}
\hspace{0.5cm}
\begin{tabular}{c|cccc}
$\to$ & $0$ & $a$ & $b$ & $1$ \tabularnewline
\hline 
$0$ &   $1$   & $1$       & $1$    & $1$ \tabularnewline
$a$ &   $b$   & $1$       & $1$    & $1$ \tabularnewline
$b$ &   $a$   & $b$       & $1$    & $1$ \tabularnewline
$1$ &   $0$   & $a$       & $b$    & $1$ \tabularnewline
\end{tabular}
\end{center}
We define the tense operators $G$ and $H$ as $G(x) = H(x) = 1$ for all $x \in \{0, a, b, 1\}$. Then the structure $\langle L, G, H \rangle$ is a tense residuated lattice in the sense of \cite{Bak}, but it is not a tense ICRDL-algebra since $H(0) = 1 \neq 0$ and $G(0) = 1 \neq 0$.
\end{remark}

The following proposition can be proved in a similar fashion   to Proposition 3.1 in \cite{PelZu23}, so we leave the details to the reader.

\begin{proposition} 
Let ${\bf{L}}$ be a tense ICRDL-algebra. Then:
\begin{itemize}
\item [(T7)] $G(x \to y) \leq F(x) \to F(y)$ and $H(x \to y)\leq P(x) \to P(y)$,
\item [(T8)] $G(1) = 1$ and $H(1) =1$,
\item [(T9)] $G( x \wedge y) = G(x) \wedge G(y)$ and $H( x \wedge y) = H(x) \wedge H(y)$,
\item [(T10)] $x \leq GP(x)$ and $x \leq HF(x)$,
\item [(T11)] $F(0) = 0$ and $P(0) = 0$, 
\item [(T12)] $F( x \vee y) = F(x) \vee F(y)$ and $P(x \vee y) = P(x) \vee P(y)$, 
\item [(T13)] $FH(x) \leq x$ and $PG(x) \leq x$,
\item [(T14)] $x \leq y$ implies $G(x) \leq G(y)$ and $H(x) \leq H(y)$, 
\item [(T15)] $x \leq y$ implies $F(x) \leq F(y)$ and $P(x)\leq P(y)$,
\item [(T16)] $x \cdot F(y) \leq F(P(x) \cdot y)$ and $x \cdot P(y) \leq P(F(x) \cdot y)$, 
\item [(T17)] $F(x) \cdot y = 0$ if and only if $x \cdot P(y) = 0$, 
\item [(T18)] $G (x \vee H(y)) \leq G(x) \vee y$ and $H(x \vee G(y)) \leq H(x) \vee y$, 
\item [(T19)] $x \vee H(y) = 1$ if and only if $G(x) \vee y = 1$.
\end{itemize}
\end{proposition}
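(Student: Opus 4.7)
My plan is to treat (T1) and (T2) as order adjunctions $P \dashv G$ and $F \dashv H$ on the underlying poset of $L$, and then obtain (T7)--(T19) either as formal consequences of these adjunctions or by combining them with (T3)--(T6) and the residuated-lattice identities from Section~\ref{sec2}. This is essentially the argument of Proposition~3.1 in \cite{PelZu23}, rewritten so that the monoidal product $\cdot$ and residual $\to$ play the role previously filled by meet and Heyting implication.

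The first batch is pure adjunction bookkeeping. Monotonicity (T14) and (T15) is immediate, since adjoint maps preserve order. The unit and counit of $F \dashv H$ give $x \leq HF(x)$ and $FH(x) \leq x$, and symmetrically for $P \dashv G$, which yields (T10) and (T13). Right adjoints preserve meets and the top element, giving (T8) and (T9); left adjoints preserve joins and, using (T3), also the bottom element, giving (T11) and (T12).

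The interesting steps come next. For (T7), I would multiply through by $F(x)$: from $(x \to y) \cdot x \leq y$, monotonicity of $F$, and (T4), we get
\[
G(x \to y) \cdot F(x) \leq F\bigl((x \to y) \cdot x\bigr) \leq F(y),
\]
and residuation yields the desired inequality. For (T16) the key trick is to insert the unit $x \leq GP(x)$ from (T10) and then apply (T4):
\[
x \cdot F(y) \leq GP(x) \cdot F(y) \leq F(P(x) \cdot y),
\]
the other half being symmetric. Property (T17) is then a one-line consequence of (T16) and (T11): if $F(x) \cdot y = 0$ then $x \cdot P(y) \leq P(F(x) \cdot y) = P(0) = 0$, and conversely using the mirror inequality. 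Property (T18) drops out of (T5) after substituting $H(y)$ for $y$ and collapsing $FH(y) \leq y$ via (T13). Finally, (T19) is (T18) plus (T8): if $x \vee H(y) = 1$ then $1 = G(1) = G(x \vee H(y)) \leq G(x) \vee y$, and the converse follows by exchanging the roles of the two adjoint pairs.

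I do not expect any serious obstacle beyond keeping the two \emph{mirror} statements synchronized throughout, since the argument is completely parallel in the pairs $(G,F)$ and $(H,P)$. The only mildly creative step is the use of the unit $x \leq GP(x)$ to derive (T16); once that observation is in hand, everything else is formal manipulation with (T4), (T5), (T11), (T13) and the residuation law.
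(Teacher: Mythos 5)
Your proof is correct and follows essentially the route the paper intends: the paper omits the argument, deferring to Proposition 3.1 of \cite{PelZu23}, and your treatment of (T1)--(T2) as adjunctions $P \dashv G$ and $F \dashv H$, combined with (T4), (T5), (T11), (T13) and residuation for the remaining items, is precisely that argument transported to the residuated setting. One cosmetic remark: (T11) needs no appeal to (T3), since $F(0)\leq y$ iff $0\leq H(y)$ holds for every $y$, so $F(0)=0$ already because left adjoints preserve the empty join.
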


The following result was proved for Heyting algebras in \cite{FP}. We now present its version for ICRDL-algebras.

\begin{lemma} Let $G$ and $H$ be two unary operators on a ICRDL-algebra ${\bf{L}}$ such that $G$ and $H$ satisfy {\rm (T14)}. Then {\rm (T6)} is equivalent to
\begin{itemize}
\item [(T6')] $G(x) \cdot G(y) \leq G( x \cdot y)$ and $H(x) \cdot H(y) \leq H(x\cdot y)$.
\end{itemize}
\end{lemma}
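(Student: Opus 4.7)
The plan is to use only the residuation law (R3), the commutativity of $\cdot$, and the monotonicity hypothesis (T14). Both directions follow from the standard evaluation inequalities $(x\to y)\cdot x\le y$ and $y\le x\to(x\cdot y)$, which are immediate from (R3) together with commutativity. I will write the argument for $G$; the case of $H$ is completely analogous.

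For $(T6)\Rightarrow(T6')$, I would start from $y\cdot x\le x\cdot y$, which by (R3) yields $y\le x\to(x\cdot y)$. Applying (T14) gives $G(y)\le G(x\to(x\cdot y))$, and then (T6) gives $G(x\to(x\cdot y))\le G(x)\to G(x\cdot y)$. Chaining these and applying (R3) in the reverse direction produces $G(y)\cdot G(x)\le G(x\cdot y)$, which is (T6') after using commutativity of $\cdot$.

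For $(T6')\Rightarrow(T6)$, I would begin with the counit-style inequality $(x\to y)\cdot x\le y$ (again immediate from (R3) applied to $x\to y\le x\to y$). Monotonicity (T14) then yields $G((x\to y)\cdot x)\le G(y)$, and (T6') gives $G(x\to y)\cdot G(x)\le G((x\to y)\cdot x)$. Combining these inequalities produces $G(x\to y)\cdot G(x)\le G(y)$, and a final application of (R3) delivers $G(x\to y)\le G(x)\to G(y)$, which is (T6).

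There is no real obstacle here; the argument is the familiar residuation-based translation between the product-preserving and the implication-preserving forms of a normal modal operator, and the only ingredients needed beyond (R3) are commutativity of $\cdot$ and the monotonicity hypothesis (T14). The proof for $H$ is obtained by replacing $G$ with $H$ throughout.
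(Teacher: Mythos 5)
Your proof is correct and follows essentially the same route as the paper's: both directions chain the residuation unit/counit inequalities $y \le x \to (x\cdot y)$ and $(x\to y)\cdot x \le y$ with monotonicity (T14) and the relevant hypothesis, then transpose via (R3). The paper's argument is just a more compressed version of exactly this.
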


\begin{proof}
Let $a,b \in L$ and suppose that (T6') is true. Then 
\[
G(a) \cdot G(a \to b) \leq G(a \cdot (a \to b)) \leq G(b).
\]
So, $G(a \to b) \leq G(a) \to G(b)$. Reciprocally, if we suppose that (T6) is true, then we have $G(a) \leq G(b \to (a \cdot b)) \leq G(b) \to G(a \cdot b)$ and $G(a) \cdot G(b) \leq G(a \cdot b)$. Is similar for the operator $H$.
\end{proof}

\begin{remark} 
Note that if ${\bf{L}}$ is a tense ICRDL-algebra, then is a fuzzy dynamic algebra in the sense of \cite{ChajdaPaseka}.  
\end{remark}

\begin{proposition} \label{propo_pro}
Let $\langle L, G, H, F, P \rangle$ be a tense ICRDL-algebra. Then
\[
G(x) \cdot F(x \to y) \leq F(y) \text{ and } H(x) \cdot P(x \to y) \leq P(y).
\]
\end{proposition}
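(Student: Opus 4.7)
The proof is a short direct computation from the axioms, and the plan is simply to chain together three ingredients: axiom (T4), the residuation inequality $x \cdot (x \to y) \leq y$, and the monotonicity of $F$ (and $P$) given by (T15).

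First I would apply (T4), $G(a) \cdot F(b) \leq F(a \cdot b)$, with the substitutions $a := x$ and $b := x \to y$. This immediately yields
\[
G(x) \cdot F(x \to y) \leq F(x \cdot (x \to y)).
\]
Next I would invoke the standard residuation inequality $x \cdot (x \to y) \leq y$, which follows from (R3) applied to $x \to y \leq x \to y$; this is essentially Proposition~2 items (2)-(3) combined with (R3). Applying $F$ and using (T15) (monotonicity of $F$), I get $F(x \cdot (x \to y)) \leq F(y)$. Chaining the two inequalities finishes the first half.

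The second inequality is proved in exactly the same way, but using the $H/P$ component of (T4), namely $H(x) \cdot P(x \to y) \leq P(x \cdot (x \to y))$, and then the monotonicity of $P$ also from (T15) to get $P(x \cdot (x \to y)) \leq P(y)$.

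There is essentially no obstacle here: the whole argument is a two-line application of (T4), residuation, and monotonicity of $F$ and $P$. The only conceptual point worth noting is that (T4) is really the tense analogue of the ``modus ponens'' principle $x \cdot (x \to y) \leq y$ transported through the future/past operators, which is precisely why this proposition looks like a tensed version of modus ponens.
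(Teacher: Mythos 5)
Your proof is correct. It is not, however, the same chain of inequalities as the paper's: the paper starts from $a \leq (a\to b)\to b$, applies monotonicity of $G$ (T14), then the derived property (T7), namely $G(x\to y)\leq F(x)\to F(y)$, and only residuates at the very end, whereas you apply the axiom (T4) directly to the pair $(x,\,x\to y)$, use the residuation inequality $x\cdot(x\to y)\leq y$ inside the argument of $F$, and finish with monotonicity of $F$ (T15). The two routes are essentially residuated mirror images of one another --- (T7) is itself a residuated reformulation of (T4) --- but yours has the small advantage of appealing to an axiom rather than to the unproved derived property (T7), at the cost of invoking (T15) instead of (T14); both monotonicity facts are stated without proof in the same proposition, so neither version is more self-contained than the other. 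Your closing remark that the proposition is a ``tensed modus ponens'' is exactly the right way to read it.
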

\begin{proof}
Let $a,b \in L$.  Since $a \leq (a \to b) \to b$, then 
\[
G(a) \leq G((a \to b) \to b) \leq F(a \to b) \to F(b)
\]
and $G(a) \cdot F(a \to b) \leq F(b)$. Similarly, we can prove the inequality $H(x) \cdot P(x \to y) \leq P(y)$. 
\end{proof}

\subsection{Connections between tense ICRDL-algebras and tense Boolean algebras}

We know that if $\bf{L}$ is a ICRDL-algebra, then the set of regular elements $Reg(L)$ of $\bf{L}$ is a ICRDL-algebra. We conclude this section by showing some necessary and sufficient conditions for the ICRDL-algebra $Reg(L)$ to be a tense Boolean algebra.

The following result was proven in \cite{FigPel14}, within the context of Heyting algebras.

\begin{proposition}\label{propo_neg}
Let ${\bf{L}}$ be a tense ICRDL-algebra. Then:
\begin{enumerate}
\item $G(\neg x) \leq \neg G(x)$ and $H(\neg x) \leq \neg H(x)$,
\item $G(\neg x) \leq \neg F(x)$ and $H(\neg x) \leq \neg P(x)$,
\item $G(x) \leq \neg F( \neg x)$ and $H(x) \leq \neg P( \neg x)$,
\item $F(\neg x) \leq \neg G(x)$ and $P(\neg x) \leq \neg H(x)$.
\end{enumerate}
\end{proposition}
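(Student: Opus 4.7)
The plan is to derive each inequality by reducing it, via residuation, to an identity of the form $\alpha(x)\cdot \beta(x) = 0$, and then to produce that identity from Proposition \ref{prop_neg}(5) (namely $x\cdot \neg x = 0$) combined with the multiplicative tense axioms (T4) and (T6'), together with (T3) and (T11) ensuring that $G$, $H$, $F$, $P$ all send $0$ to $0$. Throughout, the residuation step is just the observation that, for any $u,v\in L$, one has $u\cdot v = 0 \iff u\leq \neg v \iff v\leq \neg u$, which follows directly from (R3) with $z=0$ and commutativity of $\cdot$.

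For item (1), I would apply (T6') to obtain $G(\neg x)\cdot G(x) \leq G(\neg x \cdot x)$, and then use Proposition \ref{prop_neg}(5) together with (T3) to conclude $G(\neg x)\cdot G(x)\leq G(0)=0$; residuating gives $G(\neg x)\leq \neg G(x)$. The $H$ case is identical.

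For items (2), (3) and (4), I would use (T4) in place of (T6'). Specifically, (T4) with the substitutions $(x,y)\mapsto(\neg x,x)$ yields $G(\neg x)\cdot F(x)\leq F(\neg x\cdot x)=F(0)=0$ (using Proposition \ref{prop_neg}(5) and (T11)), which by residuation gives (2). Symmetrically, (T4) with $(x,y)\mapsto (x,\neg x)$ gives $G(x)\cdot F(\neg x)\leq F(x\cdot \neg x)=0$; residuating against $G(x)$ gives (3), and residuating against $F(\neg x)$ gives (4). The $H$/$P$ halves of (2)--(4) are obtained in exactly the same way, using the second conjunct of (T4) and (T11).

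There is no genuine obstacle here; the only thing to keep straight is which variable one residuates against at the last step (this is what separates (3) from (4), since both rest on the same identity $G(x)\cdot F(\neg x)=0$). The proof is essentially a bookkeeping exercise with (T4), (T6'), (T3), (T11) and Proposition \ref{prop_neg}(5).
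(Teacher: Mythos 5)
Your proof is correct and follows essentially the same strategy as the paper: establish an annihilation identity of the form $\alpha(x)\cdot\beta(x)=0$ and residuate. The paper only writes out item (4), obtaining $G(x)\cdot F(\neg x)\leq F(0)=0$ by specializing Proposition \ref{propo_pro} at $y=0$, whereas you reach the same identity by specializing axiom (T4) at $y=\neg x$ together with $x\cdot\neg x=0$ and (T11); this is a negligible difference, and your treatment of the remaining items via (T6') and (T4) is equally sound.
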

\begin{proof}
We only prove (4). By Proposition \ref{propo_pro}, we have $G(x) \cdot F(x \to y) \leq F(y)$. Consequently, according to (T11), it follows that $G(x) \cdot F(x \to 0) \leq F(0) = 0$, and $F(\neg x) \leq G(x) \to 0$, i.e., $F(\neg x) \leq \neg G(x)$.
\end{proof}

If ${\bf{L}}$ is a tense ICRDL-algebra, then we know that the structure $Reg( \mathbf{L} ) = \langle Reg(L), \vee_{r}, \wedge_{r}, \cdot_{r}, \to_{r}, 0, 1 \rangle$ is a ICRDL-algebra. Now, we define on $Reg(L)$ the following unary operators 
\begin{eqnarray*}
G_{r}(x)   & := & \neg \neg G(x), \\
H_{r}(x)   & := & \neg \neg H(x), \\
F_{r}(x)   & := & \neg \neg F(x), \\
P_{r}(x)   & := & \neg \neg P(x), 
\end{eqnarray*}
and consider the inequalities 
\begin{itemize}
\item[(A1)] $\neg F (\neg x) \leq G(x)$, 
\item[(A2)] $\neg P (\neg x) \leq H(x)$.
\end{itemize}

\begin{remark} \label{re_A1-A2}
Not every tense ICRDL-algebra satisfies conditions $(A1)$ and $(A2)$ (see \cite{FigPel14}, Example 2). On the other hand, by Proposition \ref{propo_neg}, in every tense ICRDL-algebra  that satisfies conditions {\rm{(A1)}} and {\rm{(A2)}}, we have the equations $G(x) = \neg F (\neg x)$ and $H(x) = \neg P (\neg x)$.
\end{remark}

\begin{theorem}
Let ${\bf{L}}$ be a tense ICRDL-algebra. Then, there are equivalent:
\begin{enumerate}
\item $\bf{L}$ is pseudocomplemented and satisfies  conditions (A1) and (A2), 
\item $\langle Reg( \mathbf{L} ), G_{r}, H_{r} \rangle$ is a tense Boolean algebra.
\end{enumerate}
\end{theorem}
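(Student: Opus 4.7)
I would prove the equivalence in two directions, starting from the observation that Proposition~\ref{propo_neg}(3) already supplies the inequalities $G(x)\le\neg F(\neg x)$ and $H(x)\le\neg P(\neg x)$ for free, so that (1) is equivalent to the identities $G(x)=\neg F(\neg x)$ and $H(x)=\neg P(\neg x)$ holding identically on $L$ together with pseudocomplementedness.

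For $(1)\Rightarrow(2)$, Theorem~\ref{R_boolean} plus pseudocomplementedness gives that $Reg(L)$ is Boolean. The key simplification is that $\neg F(\neg x)$ and $\neg P(\neg x)$ are automatically regular (by Proposition~\ref{prop_neg}(3)), so (A1) and (A2) force $G(L),H(L)\subseteq Reg(L)$, whence $G_{r}=G$ and $H_{r}=H$ on all of $L$. With this reduction, (TB1) is (T8), (TB2) is (T9) (noting that meets in $Reg(L)$ agree with those of $L$), and (TB3) follows by rewriting $\neg H_{r}(\neg x)=\neg H(\neg x)=\neg\neg P(x)$ via (A2) and triple negation, and then chaining $x\le GP(x)\le G(\neg\neg P(x))=G_{r}(\neg H_{r}(\neg x))$ using (T10) and the monotonicity (T14); the second half of (TB3) is symmetric, using (A1) to write $\neg G(\neg x)=\neg\neg F(x)$ and applying $x\le HF(x)\le H(\neg\neg F(x))$.

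For $(2)\Rightarrow(1)$, pseudocomplementedness is immediate from Theorem~\ref{R_boolean}. To obtain (A1) for arbitrary $y\in L$, I would apply (TB3) at the regular element $\neg y\in Reg(L)$ to get $\neg y\le\neg\neg G(\neg H(\neg\neg y))$ after collapsing triple negations, and combine this with the orthogonality identity (T17), Proposition~\ref{prop_neg}(5) (which supplies $F(\neg y)\cdot\neg F(\neg y)=0$, hence $P(\neg F(\neg y))\cdot\neg y=0$), and the adjunction (T1) to transport the conclusion back to $\neg F(\neg y)\le G(y)$. The derivation of (A2) is entirely symmetric, using the dual half of (TB3) and adjunction (T2).

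The main obstacle I foresee is the reverse direction: the tense Boolean structure on $Reg(L)$ is intrinsically phrased in terms of double negations of $G$ and $H$, whereas (A1) and (A2) involve $F$ and $P$ directly on arbitrary elements of $L$. The delicate step will be to use (T17) and the adjunctions (T1), (T2) to strip the double negations from the derived $Reg(L)$-level inequality, so that a statement available only for regular elements gets promoted to one valid throughout $L$; here the interaction laws of Proposition~\ref{propo_neg} should serve as the bridge between the $G$/$H$-statements produced by (TB3) and the $F$/$P$-statements required by (A1) and (A2).
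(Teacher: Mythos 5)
Your direction $(1)\Rightarrow(2)$ is essentially the paper's proof. The only difference is cosmetic: you first note that (A1), (A2) together with Proposition \ref{propo_neg}(3) give $G(x)=\neg F(\neg x)$ and $H(x)=\neg P(\neg x)$, so $G$ and $H$ already take regular values and $G_r=G$, $H_r=H$; the paper instead keeps the double negations and checks $F_r(a)=\neg G_r(\neg a)$, $P_r(a)=\neg H_r(\neg a)$ and $a\le GP(a)\le\neg\neg G(\neg\neg P(a))=G_rP_r(a)$ directly. Both computations are correct and use the same ingredients, namely (T8)--(T10), (T14), (T15) and Proposition \ref{prop_neg}(2)--(3).

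The genuine gap is in $(2)\Rightarrow(1)$. Your proposed chain for (A1) does not reach the target: from $F(\neg y)\cdot\neg F(\neg y)=0$ and (T17) you get $\neg y\cdot P(\neg F(\neg y))=0$, hence $P(\neg F(\neg y))\le\neg y\to 0=\neg\neg y$, and the adjunction (T1) then yields only $\neg F(\neg y)\le G(\neg\neg y)$, not $\neg F(\neg y)\le G(y)$. Note that this weaker inequality was derived without invoking hypothesis (2) at all, so it holds in \emph{every} tense ICRDL-algebra; since by Remark \ref{re_A1-A2} not every tense ICRDL-algebra satisfies (A1), the missing step $G(\neg\neg y)\le G(y)$ cannot be recovered. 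Worse, the ``delicate step'' you flag cannot be completed by any argument, because the implication as stated fails: take the three-element Heyting chain $0<a<1$, viewed as an ICRDL-algebra with $\cdot=\wedge$, and let $G=H=F=P$ be the identity. All of (T1)--(T6) hold, the algebra is pseudocomplemented, $Reg(L)=\{0,1\}$ and $G_r$, $H_r$ restrict to the identity there, so $\langle Reg(L),G_r,H_r\rangle$ is a tense Boolean algebra; yet (A1) reads $\neg\neg x\le x$ and fails at $x=a$. So you have not overlooked an argument that the paper supplies --- its entire proof of this direction is the phrase ``it is immediate'' --- and the equivalence can only be salvaged by weakening (A1) and (A2), e.g.\ to inequalities quantified over regular elements or to the form $\neg F(\neg x)\le G(\neg\neg x)$ that your (T17)/(T1) computation actually produces.
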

\begin{proof}
$(1) \Rightarrow (2)$ By Theorem \ref{R_boolean}, the structure $Reg( \mathbf{L} )$ is a Boolean algebra. By (T8) and (T9) it easily follows (TB1) and (TB2). We prove (TB3). Let $a \in Reg(L)$. Then, by Remark \ref{re_A1-A2} and as $a = \neg \neg a$, we have 
\[
\neg G_{r}(\neg a) = \neg \neg \neg G(\neg a) = \neg G(\neg a) = \neg \neg F(\neg \neg a) = \neg \neg F(a) = F_{r}(a),
\]
i.e., $F_{r}(a) = \neg G_{r}(\neg a)$. Similarly, we have $P_{r}(a) = \neg H_{r}(\neg a)$. Finally, we see that $a \leq G_{r}P_{r}(a)$. By (T10), (T14), (T15) and (2) of Proposition \ref{prop_neg},
\[
a \leq GP(a) \leq \neg \neg G( \neg \neg P(a)) = G_{r}P_{r}(a).
\]   
Analogously, $a \leq H_{r}F_{r}(a)$. So, $\langle Reg( \mathbf{L} ), G_{r}, H_{r} \rangle$ is a tense Boolean algebra. 

$(2) \Rightarrow (1)$ It is immediate.
\end{proof}

\subsection{Tense DRL-algebras} \label{sec4}

The aim of this subsection is to introduce the class of tense DRL-algebras, which will be crucial for the rest of the paper.

We recall that a centered integral involutive residuated lattice is a structure $\langle A, \vee, \wedge, \ast, \sim, c, 0, 1 \rangle$ such that: 
\begin{enumerate}
\item $\langle A, \vee, \wedge, \ast, \sim, \Rightarrow, 0,1 \rangle$ is an integral commutative residuated lattice, where $x \Rightarrow y = \sim (x \ast (\sim y))$, 
\item $\sim$ is a dual lattice-automorphism which is also an involution, 
\item $c$ is a fixed point of the involution.
\end{enumerate}

If there is no risk of confusion, we will denote centered integral involutive residuated lattices by their underlying set $A$.

A centered integral involutive residuated lattice $A$ is said to be a {\it{c-differential residuated lattice}}, or {\it{DRL-algebra}} for short, if the following equation, called {\it{Leibniz condition}} in \cite{CMS,CLS}, holds:
\begin{itemize}
\item[(LC)] $(x \ast y) \wedge c = ((x \wedge c) \ast y) \vee (x \ast (y \wedge c))$. 
\end{itemize}

The following lemma will be useful in Section \ref{sec7} of this paper.

\begin{lemma} \label{lem: technical DRL}
Let $A$ be a DRL-algebra. Then, the following hold: 
\begin{enumerate}
\item $[(x\vee c)\ast(y\vee c)] \vee c =(x\ast y) \vee c.$
\item $\sim ((x\vee c)\ast ( \sim (\sim y\vee c)))\wedge \sim ((y\vee c)\ast (\sim (\sim x\vee c)))= \sim (x\ast y)\vee c.$
\end{enumerate}
\end{lemma}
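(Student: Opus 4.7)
\smallskip
\noindent\textbf{Plan of proof.}

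For part (1), the argument is purely formal and does not need the Leibniz condition (LC) at all. In any residuated lattice, $\ast$ has a right adjoint in each coordinate, so it distributes over $\vee$. Expanding,
\[
(x\vee c)\ast(y\vee c)=(x\ast y)\vee(x\ast c)\vee(c\ast y)\vee(c\ast c).
\]
By integrality, $1$ is the top, and hence each of $x\ast c$, $c\ast y$, $c\ast c$ is bounded above by $c$. Joining both sides with $c$ absorbs these three terms and leaves $(x\ast y)\vee c$, which is what we want.

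For part (2), I would first simplify the inner expressions. Because $\sim$ is a dual lattice-automorphism that is an involution and fixes $c$, one has $\sim(\sim y\vee c)=y\wedge c$ and $\sim(\sim x\vee c)=x\wedge c$. Using $\sim a\wedge\sim b=\sim(a\vee b)$ on the left-hand side and $\sim(x\ast y)\vee c=\sim((x\ast y)\wedge c)$ on the right, the statement reduces (after applying the injective map $\sim$) to
\[
\bigl[(x\vee c)\ast(y\wedge c)\bigr]\vee\bigl[(y\vee c)\ast(x\wedge c)\bigr]=(x\ast y)\wedge c.
\]
By (LC), the right-hand side equals $((x\wedge c)\ast y)\vee(x\ast(y\wedge c))$.

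The key observation that drives the rest is that in every DRL-algebra one has $c\ast c=0$, and more generally $c\ast a=0$ whenever $a\leq c$. Indeed, the involutive condition forces $x\Rightarrow 0=\sim(x\ast 1)=\sim x$, so (as in Proposition \ref{prop_neg}(5) applied in the DRL setting) $x\ast\sim x=0$; specializing to $x=c=\sim c$ gives $c\ast c=0$, and by monotonicity $c\ast a\le c\ast c=0$ for $a\le c$. Distributing $\ast$ over $\vee$ on the left-hand side and using this observation with $a=y\wedge c$ and $a=x\wedge c$ makes the cross terms $c\ast(y\wedge c)$ and $c\ast(x\wedge c)$ vanish, leaving exactly $(x\ast(y\wedge c))\vee(y\ast(x\wedge c))$, which coincides with the Leibniz decomposition modulo commutativity of $\ast$.

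The main obstacle is precisely the identity $c\ast c=0$: it is not written down explicitly in the preliminaries, but without it one cannot see why the ``centered'' terms in the expansion disappear. Once that fact is isolated, the rest of (2) is just bookkeeping: rewrite both sides through the involution, expand by distributivity of $\ast$ over $\vee$, kill the $c\ast(\cdot\wedge c)$ terms, and match with (LC).
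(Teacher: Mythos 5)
Your proof is correct and follows essentially the same route as the paper: expand $\ast$ over $\vee$ and absorb the $c$-terms for (1), and for (2) pass through the involution to reduce to the Leibniz identity after killing the cross terms $c\ast(x\wedge c)$ and $c\ast(y\wedge c)$. The only cosmetic difference is that the paper justifies $c\ast(x\wedge c)=0$ directly as $\sim(c\Rightarrow(\sim x\vee c))=\sim 1=0$, whereas you derive it from $c\ast c=0$ (via $a\ast\sim a=0$) and monotonicity; both rest on the same residuation fact.
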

\begin{proof}
On the one hand, observe that 
\begin{displaymath}
\begin{array}{lcl}
[(x\vee c)\ast(y\vee c)] \vee c & = & (x\ast y) \vee (c\ast y) \vee (x\ast c) \vee (c\ast c) \vee c  \\
                                               & = & (x\ast y) \vee c.
\end{array}
\end{displaymath}
On the other hand, let 
\[
t(x,y)=\sim ((x\vee c)\ast ( \sim (\sim y\vee c)))\wedge \sim ((y\vee c)\ast (\sim (\sim x\vee c))).
\]
Then, we have  
\begin{displaymath}
\begin{array}{lcl}
t(x,y) & = & \sim [((x\vee c)\ast (y\wedge c))\vee ((y\vee c)\ast (x\wedge c))]  \\
         & = & \sim [(x\ast (y\wedge c))\vee (y\ast (x\wedge c))\vee (c\ast (x\wedge c))\vee (c\ast (y\wedge c))].
\end{array}
\end{displaymath} 
Note that $c\ast(x\wedge c)=\sim (c\Rightarrow (\sim x\vee c))=\sim 1 =0$. Similarly $c\ast(y\wedge c)=0$. Finally, by (LC), $(x\ast (y\wedge c))\vee (y\ast (x\wedge c))=(x \ast y) \wedge c$. Hence, 
\[
t(x,y) = \sim [((x \ast y) \wedge c) \vee 0 ] =  \sim (x \ast y) \vee c. 
\]
This concludes the proof.
\end{proof}

Let $A$ be a DRL-algebra and $G,H \colon A \rightarrow A$ be two unary operators. We define the operators $F$ and $P$ by $F(x) :=\sim G(\sim x)$ and $P(x) :=\sim H (\sim x)$, for any $x \in A$. 

\begin{definition}\label{def: tDRL}
Let $A$ be a DRL-algebra. Let $G$ and $H$ be unary operations on $A$ satisfying:
\begin{itemize}
\item [(t0)] $G(1)=1$ and $H(1)=1,$ 
\item [(t1)] $G(c)=c$ and $H(c)=c$,
\item [(t2)] $G(x \wedge y) = G(x) \wedge G(y)$ and $H(x \wedge y) = H(x) \wedge H(y)$,
\item [(t3)] $x \leq GP(x)$ and $x \leq HF(x)$,
\item [(t4)] $G(x \vee y) \leq G(x) \vee F(y)$ and $H(x \vee y) \leq H(x) \vee P(y)$,
\item [(t5)] $G(x \Rightarrow y) \leq G(x) \Rightarrow G(y)$ and $H(x \Rightarrow y) \leq H(x) \Rightarrow H(y)$.
\end{itemize}
An algebra ${\bf{A}} = \langle A, G, H \rangle$ will be called tense DRL-algebra.
\end{definition}

\begin{remark}
Notice that if ${\bf A}$ is a tense DRL-algebra, then the reduct $\langle A, \vee, \wedge, \sim, G, H, 0, 1\rangle$ becomes a tense De Morgan algebra, as defined in \cite{FP14}.    
\end{remark}

The following result enables us to provide an equivalent definition of tense DRL-algebras.

\begin{lemma} \label{Algunos ahorros}
Let ${\bf{A}}$ be a tense DRL-algebra. Then, axiom {\rm(t5)} is equivalent to each of the following axioms: 
\begin{itemize}
\item [(t6)] $G(x \Rightarrow y) \leq F(x) \Rightarrow F(y)$ and $H(x \Rightarrow y) \leq P(x) \Rightarrow P(y)$.
\item [(t7)] $G(x) \ast F( \sim y) \leq F(x \ast (\sim y))$ and $H(x) \ast P(\sim y) \leq P(x \ast (\sim y))$,
\item [(t8)] $F(x) \ast G(y) \leq F(x \ast y)$ and $P(x) \ast H(y) \leq F(x \ast y)$.
\end{itemize}
\end{lemma}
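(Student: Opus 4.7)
The plan is to prove the three equivalences by converting each of (t5), (t6), (t7), (t8) into a common canonical form involving only the operator $G$, and then noting that the same form arises from each axiom after applying the involution $\sim$ together with the residuation law. The analogous statements for $H$ and $P$ will follow verbatim by replacing $G,F$ with $H,P$ throughout, so I will only handle the $G$-$F$ half.

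First I would record the two elementary facts about a centered involutive residuated lattice that will be used repeatedly: (i) residuation $a\ast b\le c \iff a\le b\Rightarrow c$; (ii) contraposition $a\ast b\le c \iff \sim c\ast b\le \sim a$ (which follows from (i) together with $x\Rightarrow y=\sim(x\ast\sim y)$ and $\sim\sim x=x$); (iii) $\sim F(y)=G(\sim y)$ and $\sim G(y)=F(\sim y)$. I would also note that (t2) and (t3) together imply monotonicity of $G$, $H$, $F$, $P$, although for the present lemma only the algebraic identities above are actually needed.

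Next I would unfold each axiom. Rewriting (t5) via residuation and the identity $x\Rightarrow y=\sim(x\ast\sim y)$ gives the form
\[
(\star)\qquad G(\sim(u\ast v))\ast G(v)\le G(\sim u),
\]
after the relabeling $u=\sim y$, $v=x$. Similarly, unpacking (t8) by applying $\sim$ to $F(x)\ast G(y)\le F(x\ast y)$ and using contraposition produces exactly $(\star)$ with $u=x$, $v=y$. Thus (t5)$\iff$(t8). For (t7), the substitution $z=\sim y$ transforms $G(x)\ast F(\sim y)\le F(x\ast\sim y)$ into $G(x)\ast F(z)\le F(x\ast z)$, which is (t8) up to commutativity of $\ast$; hence (t7)$\iff$(t8). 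Finally for (t6), expand $F(x)\Rightarrow F(y)=\sim(F(x)\ast\sim F(y))=\sim(\sim G(\sim x)\ast G(\sim y))$ using (iii); residuating $G(x\Rightarrow y)\le F(x)\Rightarrow F(y)$ yields $G(x\Rightarrow y)\ast G(\sim y)\le G(\sim x)$, which after substituting $v=\sim y$ is again $(\star)$. Thus (t6)$\iff$(t5) as well.

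I expect the main (mild) obstacle to be bookkeeping: making sure the direction of every inequality is preserved when $\sim$ is applied, and that the residuation is invoked on the correct factor. Nothing deep is required beyond involutivity of $\sim$ and the two residuation/contraposition equivalences; the proof is essentially a careful accounting of substitutions that shows (t5), (t6), (t7), (t8) each rephrase the single inequality $(\star)$. The $H/P$ half is identical in form.
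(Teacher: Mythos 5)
Your proof is correct and uses essentially the same tools as the paper's: the identity $x\Rightarrow y=\sim(x\ast(\sim y))$, the duality $\sim F(y)=G(\sim y)$, and residuation/contraposition under the involution. The only difference is organizational — you reduce all four axioms to a single canonical inequality and verify every equivalence, whereas the paper carries out just one of them (via the same manipulations) and leaves the rest to the reader — so your write-up is, if anything, more complete.
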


\begin{proof}
Let $a,b \in A$. We only prove that ${\rm (t6)}$ is equivalent to ${\rm (t7)}$. We start by noticing that the following follows straight from the definitions: $a \ast (\sim b) = \sim (a \Rightarrow b)$ and $\sim F(a) = G(\sim a)$. Therefore, if $G(a \Rightarrow b) \leq G(a) \Rightarrow G(b)$, then 
\[
G( \sim (a \ast (\sim b)))\leq G(a) \Rightarrow G(b).
\]
Thus, since $\sim$ is antitone, from the identities above we get $\sim (G(a) \Rightarrow G(b)) \leq F(a \ast (\sim b))$. Hence, $G(a) \ast F(\sim b) \leq F(a \ast (\sim b))$. The converse uses the same arguments. The details of the remaining proofs are left to the reader.
\end{proof}

The following proposition will be useful in the categorical equivalence that we will present in the next section.

\begin{proposition} \label{propiedades tDRLc}
Let ${\bf{A}}$ be a tense DRL-algebra. Then:
\begin{itemize}
\item [{(c1)}] $F(c) = c$ and $P(c) = c$, 
\item [{(c2)}] $G(x \vee c) = G(x) \vee c$ and $H(x \vee c) = H(x) \vee c$, 
\item [{(c3)}] $F(x \wedge c) = F(x) \wedge c$ and $P(x \wedge c) = P(x) \wedge c$.
\end{itemize}
\end{proposition}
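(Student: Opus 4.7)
The plan is to establish the three items in order, using the definitions $F(x):=\sim G(\sim x)$ and $P(x):=\sim H(\sim x)$ together with the fact that $\sim c=c$ and $\sim$ is a dual lattice automorphism with $\sim\sim x=x$. I will only spell out the $G/F$ side each time; the $H/P$ cases are verbatim duals.

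For (c1), I would write
\[
F(c)=\sim G(\sim c)=\sim G(c)=\sim c=c,
\]
invoking $\sim c=c$ and axiom (t1). The identity $P(c)=c$ is completely analogous.

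For (c2), note first that (t2) implies monotonicity of $G$. Hence $G(x\vee c)\geq G(x)$ and $G(x\vee c)\geq G(c)=c$ by (t1), which yields $G(x)\vee c\leq G(x\vee c)$. For the reverse inequality I would apply (t4) together with (c1):
\[
G(x\vee c)\leq G(x)\vee F(c)=G(x)\vee c.
\]

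For (c3), the cleanest route is to dualize (c2). Using $\sim c=c$ and the De Morgan law $\sim(a\vee b)=\sim a\wedge\sim b$,
\[
F(x\wedge c)=\sim G(\sim(x\wedge c))=\sim G(\sim x\vee c)\stackrel{\text{(c2)}}{=}\sim\bigl(G(\sim x)\vee c\bigr)=\sim G(\sim x)\wedge\sim c=F(x)\wedge c.
\]
The $P$-case is identical with $H$ in place of $G$. I do not anticipate any real obstacle: the only small point to be careful about is that monotonicity of $G$ and $H$ is being used implicitly in (c2), but it is an immediate consequence of (t2), and the involutive behaviour of $\sim$ on $c$ is precisely what makes (c3) reduce to (c2).
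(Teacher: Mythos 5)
Your proof is correct and follows essentially the same route as the paper: (c1) from $\sim c=c$ and (t1), and (c2) by combining monotonicity of $G$ with (t1) for one inequality and (t4) plus (c1) for the other. The paper omits the proof of (c3); your dualization via $F(x)=\sim G(\sim x)$, the De Morgan law, and (c2) is exactly the intended argument and fills that gap correctly.
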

\begin{proof} 
We will only prove (c1) and (c2).

(c1): By (t1) and (t3), $F(c) = \sim G(\sim c) = \sim G(c) = \sim c = c$. Similarly, $P(c)=c$.

(c2): Let $a \in A$. Since $a \leq a \vee c$ and $c \leq a \vee c$, we have that $G(a) \leq G(a \vee c)$ and $c = G(c) \leq G(a \vee c)$. So, $G(a) \vee c \leq G(a \vee c).$ On the other hand, from (t4) and (c1), we obtain $G(a \vee c) \leq G(a) \vee F(c) = G(a) \vee c$. Therefore, $G(a \vee c) = G(a) \vee c$. Similarly, we have $H(a \vee c) = H(a) \vee c$.
\end{proof}

\section{Kalman's construction} \label{sec5}

In this section we prove some results that establish the connection between the class of tense ICRDL-algebras and the class of tense DRL-algebras.

Let ${\mathbf{L}}$ be a tense ICRDL-algebra and let us consider
\[
K(L) = \{ (a,b) \in L \times L \colon a\cdot b = 0 \}.
\]
It is well known from \cite{CMS,CLS} that by defining
\begin{eqnarray*}
(a,b) \vee (x,y).           & := & (a \vee x, b \wedge y), \\ 
(a,b) \wedge (x,y)       & := & (a \wedge x, b \vee y), \\
(a,b) \ast (x,y)             & := & (a \cdot x, (a \to y) \wedge (x \to b)), \\
(a,b) \Rightarrow (x,y) & := & ((a \to x) \wedge (y \to b), a \cdot y), \\
                     \sim (a,b)& := & (b,a), \\
                                 0 & := & (0,1), \\
                                 1 & := & (1,0), \\
                                 c & := &(0,0),
\end{eqnarray*}
we get that the structure $ \mathbf{L}_{K} = \langle K(L), \vee, \wedge, \ast, \sim, c, 0, 1 \rangle$ is a DRL-algebra. Now, we define on $K(L)$ the following unary operators given by 
\begin{eqnarray*}
G_K((a,b))   & := & (G(a),F(b)), \\
H_K((a,b))   & := & (H(a),P(b)), \\
F_{K}((a,b)) & := & (F(a),G(b)), \\
P_{K}((a,b)) & := & (P(a),H(b)).
\end{eqnarray*}

Taking into account the unary operators defined earlier, we obtain the following result.

\begin{lemma} \label{lf} 
Let ${\mathbf{L}}$ be a tense ICRDL-algebra. Then, for every $(a,b) \in K(L)$:
\begin{enumerate}
\item $G_{K}(a,b), H_{K}(a,b) \in K(L)$, 
\item $F_{K}(a,b) = \sim G_{K}( \sim (a,b))$ and $P_{K}(a,b) = \sim H_{K} (\sim (a,b))$,
\item $F_{K}(a,b), P_{K}(a,b) \in K(L)$.
\end{enumerate}
\end{lemma}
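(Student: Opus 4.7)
The three items are intertwined, and I would handle them in the order (1), (2), (3) since (3) reduces to the previous two.

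For (1), the plan is to leverage the tense axioms (T4) and (T11) directly. Given $(a,b)\in K(L)$, by definition $a\cdot b=0$, so (T4) yields
\[
G(a)\cdot F(b)\leq F(a\cdot b)=F(0)=0,
\]
where the last equality is (T11). Hence $G_K(a,b)=(G(a),F(b))\in K(L)$. The argument for $H_K(a,b)$ is identical, using the second halves of (T4) and (T11), namely $H(a)\cdot P(b)\leq P(a\cdot b)=P(0)=0$. This step is essentially the reason why (T4) and (T11) were stipulated and should present no real difficulty.

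For (2), the verification is purely formal from the definitions. Note that $\sim(a,b)=(b,a)$, so
\[
\sim G_K(\sim(a,b))=\sim G_K(b,a)=\sim(G(b),F(a))=(F(a),G(b))=F_K(a,b),
\]
and the argument for $P_K$ is symmetric, swapping $G,F$ for $H,P$. No tense axiom is needed here beyond the definitions.

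For (3), I would combine (1) and (2). Commutativity of $\cdot$ in the ICRDL-algebra ensures that if $(a,b)\in K(L)$ then $\sim(a,b)=(b,a)\in K(L)$ as well. Applying (1) to $(b,a)$ gives $G_K(b,a)\in K(L)$, and since $\sim$ clearly preserves $K(L)$ (it just swaps coordinates), we conclude $F_K(a,b)=\sim G_K(\sim(a,b))\in K(L)$. The analogous chain with $H_K,P_K$ handles the second half. The only potential pitfall is to not bother re-proving closure under $F_K,P_K$ from scratch, since the $\sim$-duality of (2) makes (1) do all the work; I would explicitly record this to keep the proof short.
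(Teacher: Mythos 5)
Your proposal is correct and item (1) is argued exactly as in the paper, via (T4) and (T11) applied to $a\cdot b=0$; the paper leaves (2) and (3) to the reader, and your formal computation for (2) and the reduction of (3) to (1) via the coordinate swap are precisely the intended completions.
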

\begin{proof} We will only prove (1). If $(a,b) \in K(A)$, then $a \cdot b=0$. Therefore, from (T4) and (T11), $G(a) \cdot F(b) \leq F(a \cdot b) = F(0) = 0$. So, $(G(a), F(b)) \in K(A)$. The proof of $H_{K} (a,b) \in K(A)$ is similar. The details of the proof of the remaining items are left to the reader.
\end{proof}

\begin{lemma} \label{kalmanH} 
Let ${\mathbf{L}}$ be a tense ICRDL-algebra. Then
\[
K( \mathbf{L} ) = \langle \mathbf{L}_{K}, G_{K}, H_{K} \rangle
\]
is a tense DRL-algebra. 
\end{lemma}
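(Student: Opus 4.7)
The approach is to verify each of the six axioms (t0)--(t5) of Definition \ref{def: tDRL} by componentwise computation on the Kalman pairs. By Lemma \ref{lf} the operators $G_K$, $H_K$, $F_K$, $P_K$ are well-defined on $K(L)$, so only the tense-DRL axioms remain to be checked. Recall that the $K(L)$-order is $(p,q)\leq(r,s)$ iff $p\leq r$ and $s\leq q$, so each axiom splits into two inequalities, one per coordinate, each of which should reduce to an axiom or derived property of the tense ICRDL-algebra $\mathbf{L}$.

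First I would dispatch the bookkeeping axioms. Axiom (t0) reduces via $1=(1,0)$ to $G(1)=1$ and $F(0)=0$, which are (T8) and (T11). Axiom (t1) reduces via $c=(0,0)$ to (T3) and (T11). Axiom (t2) splits via (T9) and (T12), noting that meets in $K(L)$ use joins in the second coordinate. Axiom (t3) reduces to $(a,b)\leq(GP(a),FH(b))$, where the first coordinate is (T10) and the second is (T13). The statements for $H_K$ are strictly dual throughout.

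Next comes axiom (t5). Expanding $(a,b)\Rightarrow(x,y)=((a\to x)\wedge(y\to b),\,a\cdot y)$ and $G_K(a,b)\Rightarrow G_K(x,y)=((G(a)\to G(x))\wedge(F(y)\to F(b)),\,G(a)\cdot F(y))$, the first-coordinate inequality splits via (T9) into $G(a\to x)\leq G(a)\to G(x)$ and $G(y\to b)\leq F(y)\to F(b)$, which are (T6) and (T7); the second-coordinate inequality is $G(a)\cdot F(y)\leq F(a\cdot y)$, which is exactly (T4).

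The main obstacle is axiom (t4). Expanding gives
\[
G_K((a,b)\vee(x,y))=(G(a\vee x),F(b\wedge y))
\]
and
\[
G_K(a,b)\vee F_K(x,y)=(G(a)\vee F(x),F(b)\wedge G(y)),
\]
so the first-coordinate inequality is (T5) itself, while the second coordinate requires $F(b)\wedge G(y)\leq F(b\wedge y)$. Axiom (T4) alone only delivers $G(y)\cdot F(b)\leq F(y\cdot b)\leq F(b\wedge y)$ via monotonicity of $F$ (using $y\cdot b\leq y\wedge b$), which uses $\cdot$ rather than $\wedge$, so bridging this gap is the delicate step. To close it I would combine (T4) with Proposition \ref{propo_pro} applied to the identity $b\to(b\wedge y)=b\to y$, and exploit the adjunction $F\dashv H$ given by (T2) together with the unit $y\leq HF(y)$ from (T10), to upgrade the product-based inequality to the required meet-based one; the verification for $H_K$ and $P_K$ is strictly dual.
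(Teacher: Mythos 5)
Your verifications of (t0)--(t3) and (t5) are correct and coincide exactly with the paper's proof: the same componentwise reductions to (T8)/(T11), (T3)/(T11), (T9)/(T12), (T10)/(T13), and (T6)/(T7)/(T4), respectively, with the order reversal in the second coordinate handled properly. You have also correctly isolated the one genuinely delicate point, namely the second coordinate of (t4): what is needed is $F(b)\wedge G(y)\leq F(b\wedge y)$, while axiom (T4) only supplies the product version $G(y)\cdot F(b)\leq F(y\cdot b)\leq F(b\wedge y)$. (The paper's own proof simply cites (T4) at this step, and moreover rewrites $G_{K}(a,b)\vee F_{K}(x,y)$ as $(G(a),G(b))\vee (F(x),F(y))$ rather than $(G(a),F(b))\vee (F(x),G(y))$, so you have put your finger on precisely the point the paper glosses over.)

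However, the repair you sketch does not close the gap. Proposition \ref{propo_pro} applied to $b\to(b\wedge y)=b\to y$ gives $G(y)\cdot F(y\to b)\leq F(y\wedge b)$, and via $b\leq y\to b$ this again yields only $G(y)\cdot F(b)\leq F(b\wedge y)$ --- another product-bounded inequality. Since $u\cdot v\leq u\wedge v$ holds in every integral residuated lattice and not conversely, a bound of the form $u\cdot v\leq w$ is strictly weaker than $u\wedge v\leq w$, and neither the adjunction $F\dashv H$ nor the unit $y\leq HF(y)$ offers a mechanism for promoting the former to the latter: the adjunction only characterizes inequalities of the shape $F(\,\cdot\,)\leq(\,\cdot\,)$, not upper bounds on a meet $F(b)\wedge G(y)$. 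Note that the classical derivation of the modal law $F(p)\wedge G(q)\leq F(p\wedge q)$ uses Boolean complementation, and in the Heyting case it is literally (T4) because there $\cdot=\wedge$; in a general ICRDL-algebra neither route is available. So as written your argument for (t4) is incomplete: you would need to derive $F(b)\wedge G(y)\leq F(b\wedge y)$ from (T1)--(T6) by some genuinely different argument (or recognize it as an additional hypothesis) before the lemma is established.
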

\begin{proof} From Lemma \ref{lf}, the operators $G_{K}$ and $H_{K}$ are well defined. We will prove the axioms of tense DRL-algebra. Due to the symmetry of
tense operators $G_K$ and $H_K$, we will prove the axioms only for the operator $G_{K}$. We take $(a,b), (x,y)\in K(A)$.

(t0):  From (T8) and (T11) it follows  
\[
G_K(1) = G_K (1,0) = (G(1),F(0)) = (1,0) = 1.
\]
Similarly, $H_{K}(1)=1$.

(t1):  From (T3) and (T10) it follows  
\[
G_K(c) = G_K (0,0) = (G(0),F(0)) = (0,0) = c.
\]
Similarly, $H_{K}(c)=c$.

(t2): From (T9) and (T12) we have 
\[
\begin{array}{lll}
G_{K}((a,b) \wedge (x,y))  & = & G_{K}(a \wedge x, b \vee y) \\
                                          & = & (G(a \wedge x),F(b \vee y)) \\
                                          & = & (G(a) \wedge G(x), F(b) \vee F(y)) \\
                                          & = & (G(a), F(b)) \wedge (G(x), F(y)) \\ 
                                          & = & G_{K}(a,b) \wedge G_{K}(x,y). \\ 
\end{array}
\]

(t3): By properties (T10) and (T13), 
\[
\begin{array}{lll}
(a,b) \wedge G_{K}P_{K}(a,b)  & = & (a,b) \wedge G_{K}((P(a), H(b))) \\
                                                 & = & (a,b) \wedge (GP(a), FH(b)) \\
                                                 & = & (a \wedge GP(a), b \vee FH(b)) \\
                                                 & = & (a,b), \\ 
\end{array}
\]
i.e., $(a,b) \leq G_{K}P_{K}(a, b)$.

(t4): From (T5) and (T4), then 
\[
\begin{array}{lll}
G_{K}((a, b) \vee (x,y))  & = & G_{K}(a \vee x, b \wedge y) \\
                                      & = & (G(a \vee x), F(b \wedge y)) \\
                                      & \leq & (G(a) \vee F(x), G(b) \wedge F(y)) \\
                                      & = & (G(a), G(b)) \vee (F(x), F(y)) \\
                                      & = & G_{K}(a, b) \vee F_{K}(x, y). \\ 
\end{array}
\]

(t5): We start by noticing that $(a,b) \Rightarrow (x,y) = \sim ((a,b) \ast (y,x))$. Thus, it is the case that $G_{K}((a,b) \Rightarrow (x,y)) = (G(a \rightarrow x)\wedge G(y \rightarrow b), F(a \cdot y))$. Further, $G_{K}(a, b) \Rightarrow G_{K}(x, y) = ((G(a) \rightarrow G(x)) \wedge (F(y) \rightarrow F(b)), G(a) \cdot F(y))$. So, from (T6) and (T7), we have $G(a \to x) \leq G(a) \to G(x)$ and $G(y \to b) \leq F(y) \to F(b)$. Besides, from (T4), $G(a) \cdot F(y) \leq F(a \cdot y)$. Hence, 
\[
(G(a \to x) \wedge G(y \to b), F(a \cdot y)) \leq ((G(a) \to G(x)) \wedge (F(y) \to F(b)), G(a) \cdot F(y)).
\]
Therefore, $G_{K}((a, b) \Rightarrow (x, y)) \leq G_K(a, b) \Rightarrow G_{K}(x, y)$. 
\end{proof}

We write $\mathsf{tICRDL}$ for the category whose objects are tense ICRDL-algebras, $\mathsf{tDRL}$ for the category whose objects are tense DRL-algebras and in both cases, the morphisms are the corresponding algebra homomorphisms. If $\mathbf{L} = \langle L, G, H, F, P \rangle$ and $\mathbf{M} = \langle M, G, H, F, P \rangle$ are tense ICRLD-algebras and $f \colon L \rightarrow M$ is a morphism in $\mathsf{tICRDL}$, then it is no hard to see that the map $K(f) \colon K(L) \rightarrow K(M)$ given by $K(f)(x, y) = (f(x), f(y))$ is a morphism in $\mathsf{tDRL}$ from $\langle K(\mathbf{L}), G_{K}, H_{K} \rangle$ to $\langle K(\mathbf{M}), G_{K}, H_{K} \rangle$. These assignments establish a functor $K$ from $\mathsf{tICRDL}$ to $\mathsf{tDRL}$.

Let ${\mathbf{A}}$ be a DRL-algebra and define 
\[
C(A) = \{ x \in A \colon x\geq c\}.
\]
If we consider $x \cdot y = (x \ast y) \vee c$ and $x \to y = \sim (x \ast (\sim y))$, then the structure
\[
\mathbf{A}_C = \langle C(A), \vee, \wedge, \cdot, \to, c, 1 \rangle
\]
is a ICRDL-algebra (see \cite{CMS}, Theorem 7.6). Moreover, if $f \colon A \rightarrow B$ is a homomorphism of DRL-algebras, then it is not hard to see that $C(f) \colon C(A)\rightarrow C(B)$, defined by $C(f)(x) = f(x)$, is a homomorphism of DRL-algebras. The following lemma shows that the latter construction can be lifted to the class of tense DRL-algebras.

\begin{lemma} \label{tDRL to tICRL} 
Let ${\mathbf{A}}$ be a tense DRL-algebra. Then
\[
C(\mathbf{A}) = \langle \mathbf{A}_C, G, H, F, P \rangle
\]
is a tense ICRDL-algebra. Moreover, if $f \colon A \rightarrow B$ is a morphism in $\mathsf{tDRL}$, then $C(f) \colon C(A) \rightarrow C(B)$ is a morphism in $\mathsf{tICRDL}$.
\end{lemma}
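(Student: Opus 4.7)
The plan is to check that the four operators $G, H, F, P$ of $\mathbf{A}$ restrict to unary operations on $C(A)$, and then to verify axioms (T1)--(T6) of Definition \ref{def2} one by one.

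First I would establish monotonicity of $G$ and $H$ from (t2) (via the standard trick $x \leq y \iff x = x \wedge y$), and of $F$ and $P$ from this together with the antitonicity of $\sim$ and the definitions $F(x) = \sim G(\sim x)$, $P(x) = \sim H(\sim x)$. Combined with (t1) of Definition \ref{def: tDRL} and (c1) of Proposition \ref{propiedades tDRLc}, monotonicity yields that each operator sends $C(A)$ into itself: for $x \geq c$, $G(x) \geq G(c) = c$, and analogously for $H, F, P$. Since the zero of $C(A)$ is $c$, axiom (T3) is then nothing but (t1).

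For the adjunctions (T1) and (T2), one direction -- $P(x) \leq y$ implies $x \leq G(y)$ -- is immediate from monotonicity of $G$ and (t3), since $x \leq GP(x) \leq G(y)$. For the converse, I would first derive $PG \leq \mathrm{id}$ by applying (t3) to $\sim y$ and using the identities $P = \sim H \sim$, $G = \sim F \sim$ together with the involutivity of $\sim$; then $x \leq G(y)$ gives $P(x) \leq PG(y) \leq y$. A symmetric argument handles (T2) via $FH \leq \mathrm{id}$. For (T4), the care point is that in $C(A)$ one has $x \cdot y = (x \ast y) \vee c$, so my plan is to invoke (t8) of Lemma \ref{Algunos ahorros} in the form $G(x) \ast F(y) \leq F(x \ast y)$ (obtained from its stated form by commutativity of $\ast$) and then use monotonicity of $F$ together with $F(c) = c$ to absorb the $\vee c$ correction on both sides. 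Axiom (T5) is literally (t4), and (T6) coincides with (t5) because the implications of $\mathbf{A}$ and $\mathbf{A}_C$ are both given by $\sim(x \ast \sim y)$.

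Finally, for the functoriality assertion, I would check that $C(f)$ is well defined (if $x \geq c$ then $f(x) \geq f(c) = c$ since $f$ preserves $c$) and then observe that preservation by $C(f)$ of $\vee, \wedge, \cdot, \to, c, 1$ follows from $f$ preserving $\vee, \wedge, \ast, \sim, c, 0, 1$ as a DRL-morphism (so the derived operations $\cdot$ and $\to$ are automatically preserved), that preservation of $G, H$ is built into $f$ being a morphism in $\mathsf{tDRL}$, and that preservation of $F, P$ follows from their definitions via $\sim$. The only real piece of bookkeeping anywhere in the argument is in (T4), due to the $\vee c$ correction in the monoidal operation of $C(A)$; everything else is a direct transfer of the tense DRL axioms.
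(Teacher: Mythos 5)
Your proposal is correct and follows essentially the same route as the paper, which only writes out the one nontrivial axiom (T4) and leaves the rest to the reader: both arguments reduce (T4) to $G(x)\ast F(y)\leq F(x\ast y)$ from Lemma \ref{Algunos ahorros} and then handle the $\vee c$ correction, the paper via join-preservation of $F$ (so that $F(x\ast y)\vee c = F(x\cdot y)$) and you via monotonicity plus $F(c)=c$, which gives the needed inequality just as well. Your verifications of the remaining axioms, the restriction of the operators to $C(A)$, and functoriality are exactly the routine checks the paper omits.
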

\begin{proof}
Let $a,b \in A$. We only proof (T4). Notice that from (t3), it is the case that $F$ preserves all the existing joins. Therefore, from by Lemma \ref{Algunos ahorros} we have 
\[ 
G(x) \cdot F(y) = (G(x) \ast F(y)) \vee c \leq F(x \ast y) \vee c = F(x \cdot y),
\]  
as desired.
\end{proof}

Let ${\mathbf{A}}$ and ${\mathbf{B}}$ be two tense DRL-algebras. Let $f \colon A \rightarrow B$ be a homomorphisms of tense DRL-algebras. It is clear now, from Lemma \ref{tDRL to tICRL}, that the assingments $A \mapsto C(\mathbf{A})$ and $f \mapsto C(f)$ determine a functor $C$ from $\mathsf{tDRL}$ to $\mathsf{tICRDL}$.

\begin{lemma}\label{lemma alpha map}
Let ${\mathbf{L}}$ be a tense ICRDL-algebra. Then the map $\alpha_{L} \colon L \rightarrow C(K(L))$ given by $\alpha_{L}(x) = (x, 0)$ is an isomorphism in $\mathsf{tICRDL}$.
\end{lemma}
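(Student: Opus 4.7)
The plan is to verify that $\alpha_L$ is (a) a well-defined bijection, (b) a homomorphism of ICRDL-algebras, and (c) a preserving map for all four tense operators. All three parts reduce to unwinding the definitions of the Kalman construction on pairs.

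First I would pin down the underlying set of $C(K(L))$. By definition $C(K(L)) = \{(a,b) \in K(L) : (a,b) \geq (0,0)\}$. Since the lattice order on $K(L)$ is the one induced by the meet $(a,b) \wedge (x,y) = (a \wedge x, b \vee y)$, the inequality $(a,b) \geq (0,0)$ is equivalent to $a \geq 0$ and $b \leq 0$, hence $b = 0$. Thus $C(K(L)) = \{(a,0) : a \in L\}$. This immediately shows that $\alpha_L$ is a well-defined bijection (the only thing to check being that $(a,0) \in K(L)$, which follows from $a \cdot 0 = 0$).

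Next I would verify preservation of the ICRDL operations by direct computation, which is entirely mechanical thanks to the formulas in the Kalman construction. Specifically, $(x,0) \vee (y,0) = (x \vee y, 0 \wedge 0) = (x \vee y, 0)$ and $(x,0) \wedge (y,0) = (x \wedge y, 0 \vee 0) = (x \wedge y, 0)$. For multiplication one uses that in $C(K(L))$ one has $(x,0) \cdot (y,0) = ((x,0) \ast (y,0)) \vee c$; computing $(x,0) \ast (y,0) = (x \cdot y, (x \to 0) \wedge (y \to 0)) = (x \cdot y, \neg x \wedge \neg y)$ and joining with $c = (0,0)$ zeroes out the second coordinate, yielding $(x \cdot y, 0)$. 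For implication, $(x,0) \to (y,0) = \sim((x,0) \ast (0,y)) = \sim(0, (x \to y) \wedge 1) = (x \to y, 0)$. Finally, $\alpha_L(0) = (0,0) = c$ and $\alpha_L(1) = (1,0) = 1$.

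Preservation of the tense operators is the most routine part and will use axioms (T3) and (T11) crucially: by definition of $G_K, H_K, F_K, P_K$ on $K(L)$,
\[
G_K(x,0) = (G(x), F(0)) = (G(x), 0) = \alpha_L(G(x)),
\]
and the analogous identities for $H, F, P$ follow from $F(0) = P(0) = 0$ (by (T11)) and $G(0) = H(0) = 0$ (by (T3)). The potentially delicate point, if any, is just to confirm that the tense operators on $C(K(L))$ are indeed the restrictions of those on $K(L)$ as set up in Lemma \ref{tDRL to tICRL}; once that is noted, nothing else requires real work, and $\alpha_L$ is an isomorphism in $\mathsf{tICRDL}$.
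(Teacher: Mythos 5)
Your proof is correct, and on the genuinely new part of the statement it coincides exactly with the paper's: both arguments reduce the tense-operator preservation to the computations $G_K(x,0)=(G(x),F(0))=(G(x),0)$ and $F_K(x,0)=(F(x),G(0))=(F(x),0)$, which rest precisely on (T11) and (T3). Where you diverge is in the treatment of the underlying residuated-lattice isomorphism: the paper simply invokes Theorem~7.6 of \cite{CMS}, which already asserts that $x\mapsto(x,0)$ is an ICRDL-isomorphism of $L$ onto $C(K(L))$, and then checks only the tense operators, whereas you re-derive that classical part from scratch (identifying $C(K(L))=\{(a,0):a\in L\}$ from the order induced by the Kalman meet, and verifying $\vee$, $\wedge$, $\ast$-with-$\vee\, c$, $\Rightarrow$, and the constants by direct computation). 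Your computations are all accurate --- in particular the observation that joining with $c=(0,0)$ kills the second coordinate $\neg x\wedge\neg y$ of the product, and that $(x,0)\to(y,0)=\sim(0,(x\to y)\wedge 1)=(x\to y,0)$ --- so your version buys self-containedness at the cost of a page of routine verification, while the paper's buys brevity by leaning on the cited result. Your closing caveat, that the tense operators on $C(K(L))$ are the restrictions of $G_K,H_K,F_K,P_K$, is indeed the only point of contact with Lemma~\ref{tDRL to tICRL} that needs to be made explicit, and you handle it correctly.
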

\begin{proof} 
Taking into account \cite[Theorem 7.6]{CMS}, we only have to prove that $\alpha_{L}$ preserves the tense operators. Let $a \in A$. Then 
\[
\alpha_{L}(G(a)) = (G(a), 0) = (G(a), F(0)) = G_{K}((a, 0)) =G_{K}(\alpha_{L}(a))
\]
and
\[
\alpha_{L}(F(a)) = (F(a), 0) = (F(a), G(0)) = F_{K}((a, 0)) = F_{K}(\alpha_{L}(a)).
\]
Similarly, we can prove $\alpha_{L}(H(a)) = H_{K}(\alpha_{L}(a))$ and $\alpha_{L}(P(a)) = P_{K}(\alpha_{L}(a))$.
\end{proof}

If $\mathbf{A}$ is a DRL-algebra, then 
\[
\beta_{A} \colon A \rightarrow K(C(A))
\]
given by  $\beta_{A}(x) = (x \vee c, \sim x \vee c)$ is an injective homomorphism of DRL-algebras (see \cite{CMS}, Lemma 7.5). We consider the folllowing condition on the class of DRL-algebras:
\begin{itemize}
\item [(CK$^{\cdot}$)] for every $x,y \geq c$ such that $x \cdot y \leq  c$, there exists $z \in A$ such that $z \vee c=x$ and $\sim z \vee c=y$.
\end{itemize}

The next lemma follows from the definition of $\beta_{A}$.

\begin{lemma} \cite{CMS} \label{CMS_lemma}
Let $\mathbf{A}$ be a DRL-algebra. Then $\mathbf{A}$ satisfies the condition {\rm (CK$^{\cdot}$)} if and only if $\beta_{A}$ is a surjective map. 
\end{lemma}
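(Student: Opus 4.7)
The plan is to reduce both sides of the biconditional to the very same statement by unfolding the constructions $K$ and $C$ explicitly, after which the equivalence becomes essentially tautological. First I would describe which pairs belong to $K(C(\mathbf{A}))$. By the construction of $C$, the carrier $C(A) = \{a \in A \colon a \geq c\}$ is equipped with multiplication $x \cdot y = (x \ast y) \vee c$ and bottom element $c$. Consequently, $(x,y) \in K(C(A))$ precisely when $x, y \geq c$ and $(x \ast y) \vee c = c$, which, since $c \leq (x \ast y) \vee c$ always holds, is equivalent to $x, y \geq c$ together with $x \ast y \leq c$. This matches verbatim the hypothesis of (CK$^{\cdot}$) once we note that the condition $x \cdot y \leq c$ in (CK$^{\cdot}$) (read inside $C(\mathbf{A})$) is equivalent to $x \ast y \leq c$ in $\mathbf{A}$.

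Next I would read off what surjectivity of $\beta_A$ amounts to. Since $\beta_A(z) = (z \vee c, \sim z \vee c)$, asserting that $(x,y)$ lies in the image of $\beta_A$ is, by definition, asserting the existence of some $z \in A$ with $z \vee c = x$ and $\sim z \vee c = y$. But this is precisely the conclusion demanded by (CK$^{\cdot}$). Combining the two observations, (CK$^{\cdot}$) is the statement that every element of $K(C(A))$ lies in the image of $\beta_A$, which is exactly surjectivity of $\beta_A$ (recalling that $\beta_A$ is already known to land in $K(C(A))$).

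The main obstacle is not really an obstacle: the condition (CK$^{\cdot}$) has been engineered so that its hypothesis describes membership in $K(C(A))$ and its conclusion describes membership in the image of $\beta_A$. The only care required is the bookkeeping around the two multiplications: recognizing that the zero of $C(\mathbf{A})$ is $c$, and that the $\cdot$ in $C(\mathbf{A})$ unwinds back to $\ast$ in $\mathbf{A}$ via $x \cdot y = (x \ast y) \vee c$. Once this identification is made, both directions are a direct substitution and no properties of the tense operators or of the Leibniz condition need to be invoked.
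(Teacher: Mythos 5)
Your proposal is correct and is exactly the definitional unfolding the paper alludes to when it says the lemma ``follows from the definition of $\beta_{A}$'': the hypothesis of (CK$^{\cdot}$) characterizes membership in $K(C(A))$ (since $x\cdot y\leq c$ in $C(\mathbf{A})$ means $(x\ast y)\vee c=c$, i.e.\ $x\ast y\leq c$), and its conclusion characterizes membership in the image of $\beta_{A}$. No issues.
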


Next, we will present a crucial lemma for the proof of Theorem \ref{equivalence theorem}.

\begin{lemma} 
Let ${\mathbf{A}}$ be a tense DRL-algebra. Then the map $\beta_{A}$ is a monomorphism in $\mathsf{tDRL}$.
\end{lemma}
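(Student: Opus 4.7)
The plan is to piggyback on Lemma \ref{CMS_lemma}, which already provides that $\beta_A$ is an injective homomorphism of DRL-algebras. Since $\mathsf{tDRL}$ is an equational category, any injective homomorphism there is a monomorphism, so all that remains is to verify that $\beta_A$ commutes with the tense operators, that is, $\beta_A(G(x)) = G_K(\beta_A(x))$ and $\beta_A(H(x)) = H_K(\beta_A(x))$ for every $x \in A$. By symmetry I would only write out the $G$-case, the $H$-case being verbatim the same.

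Expanding the definitions, I would compute
\[
\beta_A(G(x)) = (G(x) \vee c,\ \sim G(x) \vee c) = (G(x) \vee c,\ F(\sim x) \vee c),
\]
using $\sim G(x) = F(\sim x)$ (which is just the defining identity of $F$, since $\sim$ is an involution). On the other side,
\[
G_K(\beta_A(x)) = G_K(x \vee c,\ \sim x \vee c) = (G(x \vee c),\ F(\sim x \vee c)).
\]
Matching the first coordinates is immediate from (c2) of Proposition \ref{propiedades tDRLc}. For the second coordinates I need the identity $F(\sim x \vee c) = F(\sim x) \vee c$.

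To get this identity I would first observe that $F$ preserves binary joins: since $F(u) = \sim G(\sim u)$, $\sim$ is a dual lattice-automorphism (so it swaps joins and meets), and $G$ preserves binary meets by (t2), one has
\[
F(u \vee v) = \sim G(\sim u \wedge \sim v) = \sim(G(\sim u) \wedge G(\sim v)) = F(u) \vee F(v).
\]
Combined with (c1), which gives $F(c) = c$, this yields $F(\sim x \vee c) = F(\sim x) \vee F(c) = F(\sim x) \vee c$, as required. The argument for $H_K$ is obtained by the same calculation with $(H,P)$ in place of $(G,F)$, invoking the $H$-half of (c1), (c2) and (t2).

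There is no real obstacle in the argument; the only subtle point is recognising that the compatibility boils down to showing $F$ commutes with $c \vee (-)$, which in turn reduces to the preservation of binary joins by $F$ (a short De Morgan-type calculation from (t2)) together with the fixed-point property $F(c) = c$ already recorded in Proposition \ref{propiedades tDRLc}.
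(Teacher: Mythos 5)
Your proposal is correct and follows essentially the same route as the paper: reduce to checking that $\beta_A$ commutes with the tense operators, match the first coordinates via (c2), and handle the second coordinates via the identity $F(\sim x \vee c) = F(\sim x) \vee c$ (which the paper justifies only by a terse citation of (t3), (c2), (c1), whereas you derive it cleanly from join-preservation of $F$ and (c1)). The only quibble is that the injectivity of $\beta_A$ as a DRL-homomorphism comes from Lemma 7.5 of the cited reference (recalled in the paragraph preceding Lemma \ref{CMS_lemma}), not from Lemma \ref{CMS_lemma} itself, which concerns surjectivity.
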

\begin{proof} 
Let $a \in A$.  From (t3), (c2) and (c1) we have that 
\[
\begin{array}{lll}
\beta_{A}(G(a)) & = & (G(a) \vee c, \sim G(a) \vee c) \\
                         & = & (G(a) \vee c, F(\sim a) \vee c) \\
                         & = & (G(a \vee c), F(\sim a \vee c)) \\
                         & = & G_{K}((a \vee c, \sim a \vee c)), \\
\end{array}
\]
i.e., $\beta_{A}(G(a)) = H_{K}(\beta_{A}(a))$. Similarly, $\beta_{A}(H(a)) = H_{K}(\beta_{A}(a))$.
\end{proof}

We will denote by $\mathsf{tDRL}_{c}$ for the full subcategory of $\mathsf{tDRL}$ whose objects satisfy the condition (CK$^{\cdot}$). Straightforward computations based on previous results of this section prove the following result.

\begin{theorem} \label{equivalence theorem}
The functors K and C establish a categorical equivalence between $\mathsf{tICRDL}$ and $\mathsf{tDRL_{c}}$ with natural isomorphisms $\alpha$ and $\beta$.
\end{theorem}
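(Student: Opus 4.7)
The plan is to verify the four ingredients of a categorical equivalence by assembling the lemmas already established. Concretely, I must check: (i) $K$ actually takes values in $\mathsf{tDRL}_{c}$, so that both $K$ and $C$ are functors between $\mathsf{tICRDL}$ and $\mathsf{tDRL}_{c}$; (ii) $\alpha$ and $\beta$ are natural transformations; and (iii) every component of $\alpha$ and $\beta$ is an isomorphism in the appropriate category.

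The first nonroutine point is that $K(\mathbf{L})$ satisfies (CK$^{\cdot}$) for every $\mathbf{L}\in\mathsf{tICRDL}$. Since $(a,b)\geq c=(0,0)$ in $K(L)$ forces $b=0$, the elements of $C(K(L))$ are precisely the pairs $(x,0)$ with $x\in L$. Unfolding the formulas for $\ast$, $\vee$ and $c$ in $K(L)$ gives $(x,0)\cdot(y,0) = (x\cdot y,0)$ inside $C(K(L))$, so the hypothesis $(x,0)\cdot(y,0)\leq c$ reduces to $x\cdot y = 0$. Taking $z:=(x,y)$ one has $z\in K(L)$ and directly $z\vee c=(x,0)$ and $\sim z\vee c=(y,0)$, producing the witness required by (CK$^{\cdot}$).

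For the naturality of $\alpha$, given $f\colon \mathbf{L}\to \mathbf{M}$ in $\mathsf{tICRDL}$, I would compute $C(K(f))(x,0)=(f(x),f(0))=(f(x),0)=\alpha_{M}(f(x))$ directly. For the naturality of $\beta$, given $g\colon \mathbf{A}\to \mathbf{B}$ in $\mathsf{tDRL}_{c}$, using that $g$ preserves $c$, $\vee$ and $\sim$, I would check
\[
K(C(g))(a\vee c,\sim a\vee c) = (g(a)\vee c,\sim g(a)\vee c) = \beta_{B}(g(a)).
\]
The componentwise isomorphism conditions now follow from results already in place: each $\alpha_{L}$ is an isomorphism in $\mathsf{tICRDL}$ by Lemma \ref{lemma alpha map}; and for $\mathbf{A}\in\mathsf{tDRL}_{c}$, $\beta_{A}$ is a monomorphism in $\mathsf{tDRL}$ by the preceding lemma and surjective by Lemma \ref{CMS_lemma}, so it is a bijective $\mathsf{tDRL}$-morphism and thus an isomorphism in $\mathsf{tDRL}_{c}$.

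The only step demanding an honest argument rather than pure bookkeeping is the (CK$^{\cdot}$)-check for $K(\mathbf{L})$; once this is in hand, everything else is a mechanical unfolding of the definitions, leveraging the Kalman-style lemmas already established. I therefore do not anticipate any substantive obstacle, the main care being to track the tense operators correctly through the round trips $C\circ K$ and $K\circ C$.
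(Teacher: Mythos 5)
Your proposal is correct and follows exactly the route the paper intends: the paper gives no actual proof (it declares the result to follow from ``straightforward computations based on previous results of this section''), and your argument assembles precisely those lemmas (Lemma \ref{lemma alpha map}, Lemma \ref{CMS_lemma}, and the monomorphism lemma for $\beta_{A}$) together with routine naturality checks. Your explicit verification that $K(\mathbf{L})$ satisfies (CK$^{\cdot}$) — identifying $C(K(L))$ with the pairs $(x,0)$ and exhibiting the witness $z=(x,y)$ — is the one substantive detail the paper omits entirely, and it is needed for $K$ to land in $\mathsf{tDRL}_{c}$, so it is a worthwhile addition rather than a deviation.
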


\section{A round trip between tense filters} \label{sec6}

Let $\mathbf{L}$ be a ICRDL-algebra.  In Remark 7.12 of \cite{CMS} it was proved that Kalman's construction can be used in order to establish an isomorphism between the congruence lattice of $\mathbf{L}$ and the congruence lattice of the DRL-algebra $K(\mathbf{L})$. Moreover, if $\mathbf{A}$ is a DRL-algebra, such a result can also be extended for proving that there is an isomorphism between the congruence lattice of $\mathbf{A}$ and the congruence lattice of the ICRDL-algebra $C(\mathbf{L})$. In this section we are intended to extend these results to the context of tense DRL-algebras and tense ICRDL-algebras. Nevertheless, our approach borrows some of the ideas employed along Section 7.1 of \cite{PelZu23}. As an application, we provide a detailed description between the tense filters of the members of the varieties of tense DRL-algebras and tense ICRDL-algebras.

We start by recalling some notions. Let $\mathbf{L}$ be a bounded distributive lattice. If $\theta\in \mathsf{Con}(\mathbf{L})$, we can define a congruence $\gamma_{\theta}$ of $K(\mathbf{L})$  by
\[
(a,b) \gamma_{\theta}(x,y)\ \text{ if and only if }\ (a,x)\in \theta\ \text{ and } (b,y)\in \theta.
\]
Reciprocally, if $\gamma \in \mathsf{Con}(K(\mathbf{L}))$, we can also define a congruence $\theta^{\gamma}$ of $\mathbf{L}$ as 
\[
(a,b) \in \theta^{\gamma} \text{ if and only if } (a,0) \gamma (b,0).
\]

\begin{lemma}\label{iso gamma}
Let $\mathbf{L}$ be a tense ICRDL-algebra and let $\theta \in \mathsf{Con}(\mathbf{L})$. Then $\gamma_{\theta} \in \mathsf{Con}(K(\mathbf{L}))$.
\end{lemma}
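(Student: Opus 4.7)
The plan is to piggyback on the congruence correspondence already established for the pure ICRDL/DRL reducts and then only verify compatibility with the tense operators, which is where the content of this lemma lives. First I would appeal to Remark 7.12 of \cite{CMS}, which says that the assignment $\theta \mapsto \gamma_\theta$ yields an isomorphism between $\mathsf{Con}(\mathbf{L})$ (viewed as a plain ICRDL-algebra) and $\mathsf{Con}(K(\mathbf{L}))$ (viewed as a plain DRL-algebra). In particular, $\gamma_\theta$ is already an equivalence relation on $K(L)$ which is compatible with $\vee$, $\wedge$, $\ast$, $\Rightarrow$ and $\sim$, so it suffices to check that it is also compatible with $G_K$ and $H_K$.

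For the tense operators, suppose $(a,b)\,\gamma_\theta\,(x,y)$, i.e.\ $(a,x)\in\theta$ and $(b,y)\in\theta$. Since $\theta$ is a congruence of the tense ICRDL-algebra $\mathbf{L}$, it is by definition compatible with all four operations $G$, $H$, $F$, $P$. Hence $(G(a),G(x))\in\theta$ and $(F(b),F(y))\in\theta$, so by definition of $\gamma_\theta$,
\[
G_K(a,b)=(G(a),F(b))\,\gamma_\theta\,(G(x),F(y))=G_K(x,y).
\]
The same argument, using compatibility of $\theta$ with $H$ and $P$, yields $H_K(a,b)\,\gamma_\theta\,H_K(x,y)$, and by Lemma \ref{lf} the derived operators $F_K$ and $P_K$ are then automatically preserved as well (they are expressible from $G_K$ and $H_K$ via $\sim$).

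There is no real obstacle here: the heart of the matter, namely compatibility with the lattice, monoidal and implication structure, is exactly the content of the known DRL-congruence correspondence from \cite{CMS}, and the tense part reduces to a one-line componentwise check because $G_K$ and $H_K$ are defined coordinatewise from the tense operators of $\mathbf{L}$. The only point worth stressing in the write-up is that we are using the congruence property of $\theta$ with respect to \emph{all four} tense operators $G,H,F,P$ of $\mathbf{L}$, not just $G$ and $H$, since $G_K$ mixes $G$ with $F$ and $H_K$ mixes $H$ with $P$.
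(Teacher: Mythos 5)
Your proof is correct, but it decomposes the work differently from the paper. The paper's own proof takes the opposite split: it explicitly verifies compatibility of $\gamma_{\theta}$ with the monoid product $\ast$ (noting that the residual $\Rightarrow$ is analogous) and defers all the remaining operations --- including the tense ones --- to Lemma 7.1 of \cite{PelZu23}, which handles congruences under Kalman's construction for tense distributive lattices with implication; the operations $\ast$ and $\Rightarrow$ are precisely the ones not covered there. You instead invoke Remark 7.12 of \cite{CMS} for the entire non-tense DRL-reduct and carry out the tense check by hand. Both routes are legitimate, since both cited results are genuinely available (the paper itself appeals to the CMS remark at the opening of Section \ref{sec6}). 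Your componentwise verification of $G_K$ and $H_K$ is exactly right, and your closing observation --- that one must use compatibility of $\theta$ with all four operators $G,H,F,P$ because $G_K$ mixes $G$ with $F$ and $H_K$ mixes $H$ with $P$ --- is the one point of substance in the tense part; note that this is unproblematic here because $F$ and $P$ are primitive operations in the signature of Definition \ref{def2}, so any $\theta\in\mathsf{Con}(\mathbf{L})$ respects them by definition. The trade-off: your decomposition isolates the genuinely new content of the lemma (the tense operators) and leaves nothing about the residuated structure to recheck, whereas the paper's version makes explicit the least obvious coordinatewise computation, namely that the second coordinate of $(a_1,b_1)\ast(a_2,b_2)$ is the mixed term $(a_1\to b_2)\wedge(a_2\to b_1)$, whose $\theta$-compatibility is the only place where the two coordinates of $\gamma_{\theta}$ interact.
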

\begin{proof}
We only proof that $\gamma_{\theta}$ is compatible with the product because the proof of the compatibility with respect the residual is analogue. The proof of the compatibility of $\theta^{\gamma}$ with the rest of the operations uses the same ideas of Lemma 7.1 of \cite{PelZu23}.  Let $(a_i,b_i)\gamma_{\theta}(x_i,y_i)$ with $i=1,2$. By definition of $\gamma_{\theta}$ we have that $(a_i,x_i),(b_i,y_i)\in \theta$ for $i=1,2$. Then, since $\theta$ is a congruence of $\mathbf{L}$ we get that $(a_{1}a_{2},x_{1}x_{2}), ((a_{1}\rightarrow b_{2})\wedge (a_{2}\rightarrow b_{1}),(x_{1}\rightarrow y_{2})\wedge (x_{2}\rightarrow y_{1}))$ belong to $\theta$. Therefore, $(a_1,b_1)\ast (a_2,b_2)\gamma_{\theta} (x_1,y_1)\ast (x_2,y_2)$.
\end{proof}

\begin{lemma} \label{iso theta}
Let $\mathbf{L}$ be a tense ICRDL-algebra and let  $\gamma \in \mathsf{Con}(K(\mathbf{L}))$. Then $\theta^{\gamma} \in \mathsf{Con}(\mathbf{L})$.
\end{lemma}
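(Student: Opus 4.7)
The plan is to observe that $\theta^\gamma$ is precisely the preimage of $\gamma$ along $\alpha_L \colon L \to K(L)$ sending $x \mapsto (x,0)$; that is, $(a,b) \in \theta^\gamma$ if and only if $(\alpha_L(a), \alpha_L(b)) \in \gamma$. Reflexivity, symmetry, and transitivity of $\theta^\gamma$ are thus immediate from the corresponding properties of $\gamma$, so the only substantive work is establishing compatibility with the fundamental operations of $\mathbf{L}$.

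For compatibility, my strategy is to exhibit, for each basic operation $\star$ of the tense ICRDL signature, a term $t_\star$ in the tense DRL signature (allowing the constants $(0,0)$ and $(1,0)$ as parameters) such that $\alpha_L(a \star b) = t_\star(\alpha_L(a), \alpha_L(b))$. Once this is in hand, given $(a_i,b_i) \in \theta^\gamma$ for $i=1,2$, compatibility of $\gamma$ with $t_\star$ forces $(a_1 \star a_2, 0)\, \gamma\, (b_1 \star b_2, 0)$, i.e., $(a_1 \star a_2, b_1 \star b_2) \in \theta^\gamma$.

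The required terms are easy to produce by direct calculation. For $\vee$ and $\wedge$ the identity terms of $K(\mathbf{L})$ suffice since $(a,0) \vee (c,0) = (a \vee c, 0)$ and dually. For the tense operators, from (T11) we get $G_K((a,0)) = (G(a), F(0)) = (G(a),0)$, and analogously for $H$, $F$, $P$. For the residual one checks that $(a,0) \Rightarrow (c,0) = (a \to c, 0)$. For the product, the naive expression $(a,0) \ast (c,0) = (a \cdot c,\, \neg a \wedge \neg c)$ need not have second coordinate $0$, but taking a join with $(0,0)$ corrects this: $((a,0) \ast (c,0)) \vee (0,0) = (a \cdot c, 0)$. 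This small bookkeeping for $\cdot$ is the only subtle point and the main obstacle I anticipate.

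An equivalent, more conceptual route is to invoke Lemma \ref{lemma alpha map} directly: since $\alpha_L$ is an isomorphism in $\mathsf{tICRDL}$ and the operations of $C(K(\mathbf{L}))$ are term-definable from those of $K(\mathbf{L})$, the restriction of $\gamma$ to $C(K(L))^2$ is a congruence of $C(K(\mathbf{L}))$; then $\theta^\gamma$ is its pullback along $\alpha_L$, hence a congruence of $\mathbf{L}$. Either route requires essentially the same verifications, and both are routine once the product case is handled.
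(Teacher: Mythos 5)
Your proposal is correct and follows essentially the same route as the paper: both treat $\theta^{\gamma}$ as the pullback of $\gamma$ along $x\mapsto (x,0)$ and reduce compatibility to expressing each operation of $\mathbf{L}$ as a $K(\mathbf{L})$-term evaluated at pairs with second coordinate $0$. The only divergence is the product case, where you join $(a,0)\ast(c,0)=(a\cdot c,\neg a\wedge\neg c)$ with the constant $(0,0)$ to kill the second coordinate, whereas the paper reaches $(a_{1}a_{2},0)\,\gamma\,(b_{1}b_{2},0)$ by a longer argument combining $\Rightarrow$, $\sim$ and transitivity; your bookkeeping step is the cleaner of the two.
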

\begin{proof}
We will prove the compatibility of $\theta^{\gamma}$ with respect to the implication and the product. For the compatibility of $\theta_{\gamma}$ with the rest of the operations, the reader may consult Lemma 7.1 of \cite{PelZu23}. To do so, let $(a_1,b_1),(a_2,b_2)\in \theta^{\gamma}$. By definition of $\theta^{\gamma}$ it is the case that $(a_i,0)\gamma(b_i,0)$ with $i=1,2$. Thus, $(a_1\rightarrow a_2,0)\gamma(b_1\rightarrow b_2,0)$ from the assumption on $\gamma$. Hence, $(a_1,0)\Rightarrow (a_2,0)\gamma (b_1,0)\Rightarrow (b_2,0),$ so  $\theta^{\gamma}$ is compatible with the implication. On the other hand, since $\gamma\in \mathsf{Con}(K(\mathbf{L}))$, we obtain that $(a_1,0)\ast (a_2,0)\gamma (b_1,0)\ast (b_2,0)$ and $(a_{1}a_{2},\neg a_1 \wedge \neg a_2)\gamma (b_{1}b_{2},\neg b_1 \wedge \neg b_2)$. Moreover, we also get that $(0,a_2)=\sim (a_2,0)\gamma \sim (b_2,0)=(0,b_2),$ so $(a_1,0)\Rightarrow (0,a_2)\gamma (b_1,0)\Rightarrow (0,b_2)$ and consequently $(\neg a_1 \wedge \neg a_2,0)\gamma (\neg b_1 \wedge \neg b_2,0)$. Finally, by transitivity on $\gamma$ we get $(a_{1}a_{2},0)\gamma (b_{1}b_{2},0)$, we can conclude $(a_{1}a_{2},b_{1}b_{2})\in \theta^{\gamma}$, as desired.
\end{proof}

The proof of the following result is a straight consequence of Lemmas \ref{iso gamma} and \ref{iso theta} and it uses the same ideas deployed in Corollary 5.4 of \cite{CasCeSan}. So, we leave the details to the reader.

\begin{theorem}\label{iso congruencias via funtor K}
Let $\mathbf{L}$ be a tense ICRDL-algebra. Then the assignment $f:\mathsf{Con}(\mathbf{L})\rightarrow \mathsf{Con}(K(\mathbf{L}))$ defined by $f(\theta)=\gamma_\theta$ is an order isomorphism.
\end{theorem}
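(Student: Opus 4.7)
The plan is to exhibit an order-preserving inverse $g \colon \mathsf{Con}(K(\mathbf{L})) \to \mathsf{Con}(\mathbf{L})$ given by $g(\gamma) = \theta^{\gamma}$. Lemma \ref{iso gamma} already establishes that $f(\theta) = \gamma_{\theta}$ is a tense DRL-congruence of $K(\mathbf{L})$, and Lemma \ref{iso theta} ensures $\theta^{\gamma}$ respects the ICRDL operations. Tense compatibility of $\theta^{\gamma}$ follows from the identities $G_K(a,0) = (G(a), F(0)) = (G(a), 0)$ (using $F(0)=0$ from (T11)) and its analogues for $H_K, F_K, P_K$; thus if $(a,0)\,\gamma\,(b,0)$, applying $G_K$ yields $(G(a), G(b)) \in \theta^{\gamma}$, and similarly for the remaining tense operators. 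Order preservation of $f$ and $g$ is immediate from their definitions.

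Next, I would verify that $f$ and $g$ are mutual inverses. The identity $g \circ f = \mathrm{id}$ is direct: $(a,b) \in \theta^{\gamma_{\theta}}$ iff $(a,0)\,\gamma_{\theta}\,(b,0)$ iff $(a,b) \in \theta$ (using that $(0,0) \in \theta$ holds automatically). For $f \circ g = \mathrm{id}$, i.e., $\gamma_{\theta^{\gamma}} = \gamma$, one direction follows by applying the unary term $u \mapsto u \vee c$ (yielding $(a,b) \vee c = (a,0)$), and then $u \mapsto \sim u \vee c$ (yielding $(b,0)$), to an assumption $(a,b)\,\gamma\,(x,y)$; this gives $(a,x), (b,y) \in \theta^{\gamma}$ and hence $(a,b)\,\gamma_{\theta^{\gamma}}\,(x,y)$.

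The reverse inclusion $\gamma_{\theta^{\gamma}} \subseteq \gamma$ --- namely, that $(a,0)\,\gamma\,(x,0)$ and $(b,0)\,\gamma\,(y,0)$ together imply $(a,b)\,\gamma\,(x,y)$ --- is the main technical step. As indicated in the paper, this can be handled along the lines of Corollary 5.4 of \cite{CasCeSan} and Remark 7.12 of \cite{CMS}, exploiting that every $(a,b) \in K(L)$ decomposes via its positive component $(a,b) \vee c = (a,0)$ and its negative component $(a,b) \wedge c = (0,b) = \sim(b,0)$; at the level of the quotient $K(\mathbf{L})/\gamma$, this forces the class of $(a,b)$ to be determined by those of $(a,0)$ and $(b,0)$.

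\textbf{Main obstacle:} The reverse inclusion $\gamma_{\theta^{\gamma}} \subseteq \gamma$ is the crux of the proof, since $(a,b)$ is not readily expressible as a term operation in $(a,0)$ and $(b,0)$ within the algebraic language of $K(\mathbf{L})$. The correct handling relies on the structural decomposition developed in \cite{CasCeSan} and \cite{CMS}, according to which every tense DRL-congruence on $K(\mathbf{L})$ is fully determined by its behavior on the embedded copy $\alpha_L(L) = L \times \{0\}$.
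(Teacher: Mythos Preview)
Your proposal is correct and follows precisely the route the paper indicates: the paper's own proof consists entirely of the sentence ``a straight consequence of Lemmas~\ref{iso gamma} and~\ref{iso theta} and it uses the same ideas deployed in Corollary~5.4 of \cite{CasCeSan}'', so your outline in fact supplies considerably more detail than the paper does, while deferring the same step to the same source.

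One remark on your ``main obstacle'': you somewhat overstate its difficulty. Although $(a,b)$ is indeed not a term in $(a,0)$ and $(b,0)$, the inclusion $\gamma_{\theta^{\gamma}}\subseteq\gamma$ follows from an elementary sandwich argument using only the lattice operations of $K(\mathbf{L})$. Given $(a,0)\,\gamma\,(x,0)$ and $(0,b)\,\gamma\,(0,y)$, meet the first relation with $(a,b)$ and with $(x,y)$ to obtain $(a,b)\,\gamma\,(a\wedge x,b)$ and $(a\wedge x,y)\,\gamma\,(x,y)$; then join the second relation with $(a\wedge x,b)$ and with $(a\wedge x,y)$ to obtain $(a\wedge x,b)\,\gamma\,(a\wedge x,b\wedge y)$ and $(a\wedge x,b\wedge y)\,\gamma\,(a\wedge x,y)$. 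Chaining these four relations gives $(a,b)\,\gamma\,(x,y)$. All intermediate pairs lie in $K(L)$ because $a\wedge x\leq a,x$ forces the required products to vanish. This is presumably the content of the cited result in \cite{CasCeSan}, so no genuinely new structural input is needed beyond compatibility of $\gamma$ with $\wedge$ and $\vee$.
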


\begin{definition}
Let $\mathbf{L}$ be a tense ICRDL-algebra. A non-empty subset $S$ of $L$ is a tense filter provided that $S$ is an up-set, $S$ is closed under multiplication, $G$ and $H$.  
\end{definition}

\begin{remark}\label{vale} 
We claim that tense filters of tense ICRDL-algebras are closed under $F$ and $P$. Indeed, if $S$ is a tense filter of a tense ICRDL-algebra $\mathbf{L}$ and $x\in S$, then since $G(0)=0$ and $H(0)=0$, by {\rm (T5)} we get that $G(x)\leq F(x)$ and $H(x)\leq P(x)$. Since $S$ is increasing and closed under $G$ and $H$, then $F(x), P(x)\in S$, as claimed.
\end{remark}

Let $\mathbf{L}$ be a tense ICRDL-algebra. We write $\mathrm{tFi}(\mathbf{L})$ for the poset of tense filters of $\mathbf{L}$ orderded by inclusion and $\mathsf{Con}(\mathbf{L})$ for the congruence lattice of $\mathbf{L}$. If we consider the operation $x\circ y=(x\rightarrow y)\cdot (y\rightarrow x)$, then we have the following result.

\begin{theorem} \label{congruences tICRDL}
Let $\mathbf{L}$ be a tense ICRDL-algebra, $S\in \mathrm{tFi}(\mathbf{L})$ and $\theta\in \mathsf{Con}(\mathbf{L})$. Then: 
\begin{enumerate}
\item $S_{\theta}:=1/\theta$ is a tense filter of $\mathbf{L}$,
\item $\theta_{S}:=\{(x,y)\in L^{2} \colon x\circ y\in S\}$ is a congruence of $\mathbf{L}$,
\item The assignments $\theta\mapsto S_{\theta}$ and $S\mapsto \theta_{S}$ establish a mutually-inverse poset isomorphism between $\mathrm{tFi}(\mathbf{L})$ and $\mathsf{Con}(\mathbf{L})$.
\end{enumerate}
\end{theorem}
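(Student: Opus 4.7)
The plan is to handle each item in turn, following the template of the classical correspondence between filters and congruences in residuated lattices, and adding the tense data at the end. For item (1), the set $1/\theta$ contains $1$ and is upward-closed by compatibility of $\theta$ with $\vee$: if $x\in 1/\theta$ and $x\leq y$, then $y=x\vee y\,\theta\,1\vee y=1$. Closure under $\cdot$ follows from $1\cdot 1=1$ and compatibility, while closure under $G$ and $H$ follows from (T8) together with compatibility of $\theta$ with the tense operators; by Remark \ref{vale}, closure under $F$ and $P$ is automatic.

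For item (2), the relation $\theta_S$ is reflexive because $x\circ x=1\cdot 1=1\in S$, symmetric by the definition of $\circ$, and transitive thanks to the standard residuated inequality $(x\circ y)\cdot(y\circ z)\leq x\circ z$ combined with closure of $S$ under product and upward closure. Compatibility with $\vee,\wedge,\cdot,\to$ is obtained by the usual residuated-lattice arguments, producing inequalities of the form $(x_1\circ y_1)\cdot (x_2\circ y_2)\leq (x_1\star x_2)\circ (y_1\star y_2)$ for $\star\in\{\vee,\wedge,\cdot,\to\}$. The genuinely new content is compatibility with the tense operators: assuming $x\circ y\in S$, upward closure gives $x\to y,\, y\to x\in S$; closure of $S$ under $G$ yields $G(x\to y),\, G(y\to x)\in S$, and by (T6) together with upward closure we obtain $G(x)\to G(y),\, G(y)\to G(x)\in S$, whose product is $G(x)\circ G(y)\in S$. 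The same scheme using (T7) handles $F$ and $P$, the required closure of tense filters under $F$ and $P$ being guaranteed by Remark \ref{vale}; the dual assertions for $H$ work identically.

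For item (3) we check that the two assignments are mutually inverse and order-preserving. The identity $S_{\theta_S}=S$ is immediate from $x\circ 1=(x\to 1)\cdot(1\to x)=x$. For $\theta_{S_\theta}=\theta$, if $(x,y)\in\theta$ then $x\to y\,\theta\,y\to y=1$ and $y\to x\,\theta\,1$, so $x\circ y\,\theta\,1$; conversely, if $x\circ y\in 1/\theta$, the inequality $x\circ y\leq x\to y\leq 1$ together with compatibility of $\theta$ with $\vee$ gives $x\to y\,\theta\,1$ and similarly $y\to x\,\theta\,1$, whence $x=x\cdot 1\,\theta\, x\cdot(x\to y)\leq y$ yields $x\vee y\,\theta\,y$ and by symmetry $x\vee y\,\theta\,x$, so $(x,y)\in\theta$. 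Order-preservation of $\theta\mapsto S_\theta$ and $S\mapsto \theta_S$ is clear from the definitions.

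The main obstacle is the compatibility of $\theta_S$ with the four tense operators, since this is the only part that is not a direct adaptation of the classical residuated-filter/congruence bijection already deployed in \cite{PelZu23,CasCeSan}. The argument hinges essentially on (T6), (T7), and the ``free'' closure of tense filters under $F$ and $P$ afforded by Remark \ref{vale}; it would break down for a weaker axiomatization that failed to link $G$ with $F$ (and $H$ with $P$) through the implication.
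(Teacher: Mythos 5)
Your proof is correct and follows exactly the route the paper intends: the paper's own ``proof'' is only a pointer to the analogous Lemma 11 of \cite{FigPel14} for Heyting algebras, and your write-up supplies the standard filter--congruence correspondence for residuated lattices together with the tense compatibility via (T6)/(T7) and Remark \ref{vale}, which is precisely the adaptation being invoked. The only cosmetic point is that compatibility of $\theta_S$ with $F$ and $P$ already follows from closure of $S$ under $G$, $H$ and (T7), so the appeal to Remark \ref{vale} there is redundant rather than needed.
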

\begin{proof}
It is proved in a similar way to Lemma 11 of \cite{FigPel14}.    
\end{proof}

Let $\mathbf{A}$ be a tense DRL-algebra. Note that since by definition, $\mathbf{A}$ is a tense ICRDL-algebra, and by Theorem \ref{congruences tICRDL}, we can obtain a description of the lattice of congruences of $\mathbf{A}$ by means of tense filters.

\begin{corollary} \label{congruencias tDRL}
Let $\mathbf{A}$ be a tense DRL-algebra. Then the assignments $\theta\mapsto D_{\theta}$ and $D\mapsto \theta_{D}$ establish a mutually-inverse poset isomorphism between $\mathrm{tFi}(\mathbf{A})$ and $\mathsf{Con}(\mathbf{A})$. 
\end{corollary}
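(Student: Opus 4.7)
The plan is to mimic, nearly verbatim, the proof of Theorem \ref{congruences tICRDL} in the tense DRL setting. The point is that the definitions $D_\theta := 1/\theta$ and $\theta_D := \{(x, y) \in A^2 : (x \Rightarrow y) \ast (y \Rightarrow x) \in D\}$ only invoke the operations $\ast$, $\Rightarrow$, $0$, $1$, $G$ and $H$, all of which are available (with $\Rightarrow$ derived from $\ast$ and $\sim$) in any tense DRL-algebra, so the bulk of the proof transfers without modification.

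First I would verify that $D_\theta$ is a tense filter: it is an up-set, closed under $\ast$ by compatibility of $\theta$ with $\ast$, and closed under $G$ and $H$ using (t0), which gives $G(1)=H(1)=1$. Next I would show that $\theta_D$ is a congruence. Reflexivity and symmetry are immediate, while transitivity uses the standard residuated-lattice inequality $(x \circ y) \ast (y \circ z) \leq x \circ z$ together with the closure of $D$ under $\ast$. Compatibility of $\theta_D$ with $\vee$, $\wedge$, $\ast$ and $\Rightarrow$ is the routine filter-induced-congruence argument in integral commutative residuated lattices. Compatibility with $G$ and $H$ invokes (t5): from $x \circ y \in D$ the up-set property yields $x \Rightarrow y \in D$, hence $G(x \Rightarrow y) \in D$, and (t5) places this below $G(x) \Rightarrow G(y)$, so the latter lies in $D$; symmetrically for the reverse direction and for $H$. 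The involution is handled via $\sim x = x \Rightarrow 0$, so compatibility with $\sim$ reduces to compatibility with $\Rightarrow$, and $c$ is preserved automatically as a constant.

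With both assignments shown to land in the correct ambient poset, the identities $\theta_{D_\theta} = \theta$ and $D_{\theta_D} = D$ are the same routine calculations as in Lemma 11 of \cite{FigPel14}, and monotonicity with respect to inclusion is immediate in both directions. The main obstacle I anticipate is conceptual rather than computational: one must first fix a precise notion of tense filter for tense DRL-algebras that parallels the ICRDL definition but is phrased in the DRL signature, and then observe (as in Remark \ref{vale}) that closure under $F$ and $P$ is automatic once closure under $G$ and $H$ is assumed.
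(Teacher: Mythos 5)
Your proposal is correct, and the underlying computations (the $x\circ y$ correspondence, closure of $1/\theta$ under $G,H$ via (t0), compatibility of $\theta_D$ with $G,H$ via (t5) and the up-set property, convexity for $\theta_{D_\theta}=\theta$) are exactly the ones the paper relies on. The difference is in how the reduction is packaged: the paper gives no computation at all, simply asserting that a tense DRL-algebra ``is by definition'' a tense ICRDL-algebra and invoking Theorem \ref{congruences tICRDL}, whereas you reprove the correspondence directly in the DRL signature. Your route is more work but it actually supplies what the paper's one-line derivation quietly needs: for the ICRDL theorem to transfer, one must know that congruences of $\mathbf{A}$ in the DRL signature coincide with congruences of its tense-ICRDL reduct, i.e.\ that compatibility with $\sim$ and preservation of $c$ come for free. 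Your observation that $\sim x = x \Rightarrow 0$ (since $x\Rightarrow 0=\sim(x\ast(\sim 0))=\sim(x\ast 1)=\sim x$), so that $\sim$ is term-definable from $\Rightarrow$ and $0$, together with $F=\sim G\sim$ and $P=\sim H\sim$ being derived operations, is precisely the missing justification. So your proof buys explicitness and closes a small gap the paper leaves implicit, at the cost of repeating verifications that the paper delegates to Theorem \ref{congruences tICRDL} and, ultimately, to Lemma 11 of \cite{FigPel14}.
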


Let $\mathbf{A}$ be a tense DRL-algebra.  Our next goal is to describe explicitly how from a congruence of $\mathbf{A}$ we can get a unique tense filter of $C(\mathbf{A})$. We start by recalling from Theorem \ref{equivalence theorem} that the map $\beta_{\mathbf{A}} \colon \mathbf{A}\rightarrow K(C(\mathbf{A}))$ defined by $\beta_{\mathbf{A}}(x) = (x\vee c, \sim x \vee c)$ is an isomorphism of tense DRL-algebras. Further, from general reasons the map $h_{\mathbf{A}} = \beta_{\mathbf{A}} \times \beta_{\mathbf{A}}$ which is defined by $h_{\mathbf{A}}(x,y) = (\beta_{\mathbf{A}}(x), \beta_{\mathbf{A}}(y))$, induces an isomorphism between $\mathsf{Con}_{\mathrm{tDRL}}(\mathbf{A})$ and $\mathsf{Con}_{\mathrm{tDRL}}(KC(\mathbf{A}))$. Now, from Theorem \ref{iso congruencias via funtor K}, the map 
\[
g \colon \mathsf{Con}_{\mathrm{tDRL}}(K(C(\mathbf{A}))) \rightarrow \mathsf{Con}_{\mathrm{tICDRL}}(C(\mathbf{A}))
\] 
defined by $g(\gamma)=\theta^{\gamma}$ is an isomorphism. Consider the composite $w = gh_{\mathbf{A}}$, i.e., 
\[
w \colon \mathsf{Con}_{\mathrm{tDRL}}(\mathbf{A})\rightarrow  \mathsf{Con}_{\mathrm{tICDRL}}(C(\mathbf{A})).
\]

\begin{lemma}\label{Fil(A) to Fil(C(A))} 
Let $\mathbf{A}$ be a tense DRL-algebra and let $D\in \mathrm{tFi}(\mathbf{A})$. Then 
\[
S_{D}=D\cap C(A)
\]
is a tense filter of $C(\mathbf{A})$. 
\end{lemma}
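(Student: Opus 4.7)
The strategy is to verify each of the conditions in the definition of tense filter directly, using that $D$ itself already satisfies these conditions in $\mathbf{A}$ and that the operations of $C(\mathbf{A})$ are built transparently from those of $\mathbf{A}$.

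First I would check non-emptiness: since $D$ is a tense filter of $\mathbf{A}$, it contains $1$; and since $c\leq 1$, one has $1\in C(A)$, hence $1\in S_{D}$.

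Next I would show that $S_{D}$ is an up-set in $C(\mathbf{A})$. If $x\in S_{D}$ and $x\leq y$ with $y\in C(A)$, then in particular $x\leq y$ in $\mathbf{A}$, and since $D$ is an up-set of $\mathbf{A}$ we get $y\in D$; combined with $y\in C(A)$ this gives $y\in S_{D}$.

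For closure under the product of $C(\mathbf{A})$, recall that this product is $x\cdot y=(x\ast y)\vee c$. If $x,y\in S_{D}$, then $x\ast y\in D$ because $D$ is closed under $\ast$; since $x\ast y\leq (x\ast y)\vee c$ and $D$ is an up-set, $(x\ast y)\vee c\in D$. Trivially $(x\ast y)\vee c\geq c$, so $(x\ast y)\vee c\in C(A)$, and therefore $x\cdot y\in S_{D}$. This is the step that uses the specific shape of the product of $C(\mathbf{A})$, and it is precisely where the up-set property of $D$ is needed.

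Finally, for closure under $G$ and $H$, let $x\in S_{D}$. Since $D$ is a tense filter of $\mathbf{A}$, $G(x),H(x)\in D$. To see that these elements belong to $C(A)$, note that $x\geq c$ and $G$ is monotone (by \rm{(t2)}, or directly by the analogue of \rm{(T14)} available in tense DRL-algebras), so by \rm{(t1)} we have $G(x)\geq G(c)=c$, that is, $G(x)\in C(A)$; the argument for $H(x)$ is identical. Hence $G(x),H(x)\in S_{D}$, which concludes the verification that $S_{D}$ is a tense filter of $C(\mathbf{A})$. No step presents a real obstacle; the only subtlety worth flagging is that multiplicative closure must be checked for the derived product $(x\ast y)\vee c$ of $C(\mathbf{A})$ rather than for $\ast$ itself.
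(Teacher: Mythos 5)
Your proof is correct, but it takes a genuinely different route from the paper. You verify the four defining conditions of a tense filter directly: non-emptiness via $1\geq c$, the up-set property by restriction, closure under the derived product $x\cdot y=(x\ast y)\vee c$ of $C(\mathbf{A})$ using closure of $D$ under $\ast$ together with the up-set property, and closure under $G$ and $H$ using monotonicity (from (t2)) and $G(c)=H(c)=c$ (from (t1)) to see that $G(x),H(x)\geq c$. All of these steps are sound, and your flag about checking multiplicative closure for $(x\ast y)\vee c$ rather than for $\ast$ itself is exactly the right point of care. The paper instead deduces the lemma from the congruence machinery already in place: it passes from $D$ to the congruence $\theta_D$ via Corollary \ref{congruencias tDRL}, transports it along the isomorphism $w\colon \mathsf{Con}(\mathbf{A})\to\mathsf{Con}(C(\mathbf{A}))$ to obtain $w(\theta_D)=\theta_D\cap C(A)^2$, and then reads off $S_D$ as $1/(\theta_D\cap C(A)^2)=D\cap C(A)$ using Theorem \ref{congruences tICRDL}. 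Your argument is more elementary and self-contained, and it does not depend on the correctness of the isomorphisms $w$, $\Theta$ and their composites; the paper's argument is shorter given the surrounding results and has the added benefit of exhibiting $S_D$ as the image of $D$ under the composite isomorphism, which is the form in which the lemma is reused to prove that $D\mapsto S_D$ and $S\mapsto D_S$ are mutually inverse. Either proof is acceptable here.
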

\begin{proof}
Notice that from Corollary \ref{congruencias tDRL} and the discussion of above it is no hard to see that $w(\theta_D)=\theta_{D}\cap C(A)^{2}$. So, from Lemma \ref{congruences tICRDL} and the fact that 
\[ 
1/(\theta_{D}\cap C(A)^{2})= 1/\theta_{D}\cap C(A)=D\cap C(A)
\]
the result follows.  
\end{proof}

Let $\mathbf{A}$ be a tense DRL-algebra. Now, we proceed to describe explicitly how from a tense filter of $C(\mathbf{A})$ we can obtain a unique congruence of $\mathbf{A}$. To do so, let $S$ be a tense filter of $C(\mathbf{A})$.  Recall that from Lemma \ref{congruences tICRDL} the map 
\[
\Theta \colon \mathrm{tFi}(C(\mathbf{A}))\rightarrow  \mathsf{Con}(C(\mathbf{A}))
\]
is an isomorphism. By Proposition \ref{iso congruencias via funtor K}, the map  
\[
k \colon \mathsf{Con}(C(\mathbf{A})) \rightarrow \mathsf{Con}(K(C(\mathbf{A})))
\]
defined by $k(\theta) = \gamma_{\theta}$ is a poset isomorphism. Let $(a,b)\in K(C(A))$. Then, since $\beta_{\mathbf{A}}$ is an isomorphism, from condition (CK$^{\cdot}$) there exists a unique $z\in A$ such that $z \vee c = a$ and $\sim z \vee c = b$. Thus, it is the case that the map $r_{\mathbf{A}} \colon K(C(\mathbf{A})) \rightarrow \mathbf{A}$ defined by $r_{\mathbf{A}} (a,b) = z$ is in fact $\beta_{\mathbf{A}}^{-1}$ and therefore an isomorphism. By general reasons, the map $w_{\mathbf{A}} = r_{\mathbf{A}} \times r_{\mathbf{A}}$ induces an isomorphism between $\mathsf{Con}(K(C(\mathbf{A})))$ and $\mathsf{Con}(\mathbf{A})$. Consider then the composite $m=w_{A} k \Theta$, i.e., 
\[
m \colon \mathrm{tFi}(C(\mathbf{A})) \rightarrow \mathsf{Con}(\mathbf{A}). 
\]

\begin{lemma} \label{Fil(C(A)) to Fil(A)}
Let $\mathbf{A}$ be a tense DRL-algebra and let $S\in \mathrm{tFi}(C(\mathbf{A}))$. Then 
\[
D_{S} = \{u \in A \colon u\vee c, c\Rightarrow u\in S \}
\] 
is a tense filter of $\mathbf{A}$. 
\end{lemma}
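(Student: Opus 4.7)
The plan is to identify $D_S$ with $1/m(S)$, the $1$-class of the congruence $m(S) \in \mathsf{Con}(\mathbf{A})$ associated to $S$ via the isomorphism $m = w_{\mathbf{A}} \circ k \circ \Theta$ constructed just above the statement. Once this identification is in place, the conclusion follows immediately from Corollary \ref{congruencias tDRL}, which guarantees that the $1$-class of any congruence of a tense DRL-algebra is a tense filter. The virtue of this approach is that it circumvents any direct attempt to compute $c \Rightarrow (u \ast v)$, $c \Rightarrow G(u)$, etc., in terms of $c \Rightarrow u$ and $c \Rightarrow v$.

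To carry out the identification I will unravel the definition of $m$ stepwise. For $u \in A$, the membership $(u, 1) \in m(S)$ is equivalent, through $w_{\mathbf{A}} = r_{\mathbf{A}} \times r_{\mathbf{A}}$, to $(\beta_{\mathbf{A}}(u), \beta_{\mathbf{A}}(1)) \in \gamma_{\theta_S}$. Since $\beta_{\mathbf{A}}(1) = (1, c)$ and $\beta_{\mathbf{A}}(u) = (u \vee c, \sim u \vee c)$, this amounts to the two conditions
\[
(u \vee c, 1) \in \theta_S \quad \text{and} \quad (\sim u \vee c, c) \in \theta_S.
\]
The first translates directly into $u \vee c \in S$, because $(u \vee c) \circ 1 = u \vee c$ in $C(\mathbf{A})$.

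The analysis of the second condition is the step I expect to be the main obstacle, since it is where the specific algebraic features of DRL-algebras enter. The goal is to show that $(\sim u \vee c) \circ c = c \Rightarrow u$ in $C(\mathbf{A})$, so that $(\sim u \vee c, c) \in \theta_S$ becomes exactly $c \Rightarrow u \in S$. To this end I would first record the key identity $c \ast c = 0$ in $\mathbf{A}$: from $c \Rightarrow c = 1$ (reflexivity of residuation), the defining identity $x \Rightarrow y = \sim(x \ast \sim y)$ and $\sim c = c$ we obtain $\sim(c \ast c) = 1$, hence $c \ast c = 0$. Combining this with the distributivity of $\ast$ over $\vee$, one factor of $\circ$ simplifies as $c \to (\sim u \vee c) = \sim(c \ast (u \wedge c)) = \sim 0 = 1$, while the other becomes $(\sim u \vee c) \to c = \sim((\sim u \ast c) \vee (c \ast c)) = \sim(c \ast \sim u) = c \Rightarrow u$. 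Their product in $C(\mathbf{A})$ is therefore $c \Rightarrow u$, as required.

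Putting both translations together yields $(u, 1) \in m(S)$ if and only if $u \vee c \in S$ and $c \Rightarrow u \in S$; that is, $D_S = 1/m(S)$. Since $m(S) \in \mathsf{Con}(\mathbf{A})$, Corollary \ref{congruencias tDRL} concludes the proof.
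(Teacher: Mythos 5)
Your proposal is correct and follows essentially the same route as the paper: both identify $D_S$ with the $1$-class of the transported congruence $m(S)=w_{\mathbf{A}}k\Theta(S)$ and then invoke Corollary \ref{congruencias tDRL}, using the same key identities ($c\ast c=0$, $\ast$ distributing over $\vee$, and $(\sim u\vee c)\to c=c\Rightarrow u$). The only difference is that you compute just the condition $(u,1)\in m(S)$ rather than describing all of $m(S)$ as the paper does, which is a harmless streamlining.
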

\begin{proof}
From the discussion of above, notice that $m(S)$ is a congruence of $\mathbf{A}$. We shall prove that $m(S)=\theta_{S}$. To do so, observe first that from Theorem \ref{iso congruencias via funtor K} and Lemma \ref{congruences tICRDL} we have  
\begin{equation} \label{equation 1}
k\Theta(S) = \{((a,b),(x,y)) \in K(C(T))^{2} \colon a\circ x, b\circ y\in S\}.
\end{equation}
Thus, if we write $r_{\mathbf{A}}(a,b)=u$ and $r_{\mathbf{A}}(x,y)=v$, it is the case that $u\vee c=a$, $v\vee c=x$, $\sim u\vee c=b$ and $\sim v\vee c=y$. Since $c\leq v\vee c $, then $c\Rightarrow (v\vee c)=1$. So, we get 
\begin{displaymath}
\begin{array}{rcl}
a\Rightarrow x & = & (u\vee c)\Rightarrow (v\vee c) \\
                        & = & (u\Rightarrow (v\vee c))\wedge (c\Rightarrow (v\vee c)) \\
                        & = & (u\Rightarrow (v\vee c))\wedge 1 \\  
                        & = & u\Rightarrow (v\vee c).
\end{array}
\end{displaymath}
By the same arguments of above, we can prove also that $x \Rightarrow a = v\Rightarrow (u\vee c)$, $b\Rightarrow y=\sim u\Rightarrow (\sim v\vee c)$ and $y\Rightarrow b=\sim v\Rightarrow (\sim u\vee c)$. Then, since $S$ is an up-set, we obtain
\[
m(S)=\{(u,v)\in T^{2}\colon u\Rightarrow (v\vee c), v\Rightarrow (u\vee c), u\Rightarrow (\sim v\vee c), \sim v\Rightarrow (\sim u\vee c)\in S\}.
\]
Hence $m(S)=\theta_{S}$. We conclude by noticing that from Corollary \ref{congruencias tDRL} and the fact that $\sim u\Rightarrow c=c\Rightarrow u$ we get  $D_S=1/\theta_{S}$, as claimed.
\end{proof}

\begin{theorem}
Let $\mathbf{A}$ be a tense DRL-algebra, $D\in \mathrm{tFi}(\mathbf{A})$ and $S\in \mathrm{tFi}(C(\mathbf{A}))$. Then the assignments $D \rightarrow S_{D}$ and $S \mapsto D_{S}$ establish a mutually-inverse poset isomorphism between $\mathrm{tFi}(\mathbf{A})$ and $\mathrm{tFi}(C(\mathbf{A}))$.
\end{theorem}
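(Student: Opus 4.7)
The plan is to combine Lemmas \ref{Fil(A) to Fil(C(A))} and \ref{Fil(C(A)) to Fil(A)} with the filter--congruence correspondences provided by Theorem \ref{congruences tICRDL} and Corollary \ref{congruencias tDRL}. Order-preservation of both assignments is immediate from their set-theoretic definitions $S_{D} = D \cap C(A)$ and $D_{S} = \{u \in A \colon u \vee c,\, c \Rightarrow u \in S\}$, so the core of the argument is to verify that the two round trips $D \mapsto S_{D} \mapsto D_{S_{D}}$ and $S \mapsto D_{S} \mapsto S_{D_{S}}$ return the identity.

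For this I would exploit the isomorphism $w \colon \mathsf{Con}(\mathbf{A}) \to \mathsf{Con}(C(\mathbf{A}))$ constructed in the proof of Lemma \ref{Fil(A) to Fil(C(A))}, which satisfies the explicit equality $w(\theta_{D}) = \theta_{D} \cap C(A)^{2}$, together with its inverse $\widetilde{m} = w_{\mathbf{A}} k \colon \mathsf{Con}(C(\mathbf{A})) \to \mathsf{Con}(\mathbf{A})$ appearing inside the definition $m = \widetilde{m} \circ \Theta$ in Lemma \ref{Fil(C(A)) to Fil(A)}. Given $D \in \mathrm{tFi}(\mathbf{A})$, the identity $S_{D} = D \cap C(A) = (1/\theta_{D}) \cap C(A) = 1/w(\theta_{D})$ yields $\Theta(S_{D}) = w(\theta_{D})$, and hence $m(S_{D}) = \widetilde{m}(w(\theta_{D})) = \theta_{D}$; consequently $D_{S_{D}} = 1/m(S_{D}) = 1/\theta_{D} = D$. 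The converse $S_{D_{S}} = S$ is obtained by a symmetric computation: $S_{D_{S}} = (1/m(S)) \cap C(A) = 1/(m(S) \cap C(A)^{2}) = 1/w(m(S)) = 1/\Theta(S) = S$, using that $w \widetilde{m} = \mathrm{id}$.

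The main obstacle I anticipate is confirming that $\widetilde{m}$ is indeed $w^{-1}$, that is, that the four component isomorphisms $h_{\mathbf{A}}$, $g$, $k$ and $w_{\mathbf{A}}$ built from Theorem \ref{equivalence theorem} and Theorem \ref{iso congruencias via funtor K} assemble compatibly on the nose. This amounts to a routine diagram chase unpacking the definitions of $w$ and $m$, after which the theorem follows from the two short computations displayed above.
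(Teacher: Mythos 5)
Your argument is correct, but it is not the route the paper takes. The paper proves $D_{S_{D}}=D$ by a direct element-level computation: for $u\in D$, the inequalities $u\le u\vee c$ and $u\le c\Rightarrow u$ place $u\vee c$ and $c\Rightarrow u$ in $D\cap C(A)=S_{D}$, whence $u\in D_{S_{D}}$; conversely, if $u\vee c$ and $c\Rightarrow u$ lie in $D\cap C(A)$, the inequality $(u\vee c)\ast(c\Rightarrow u)\le u$ together with closure of $D$ under $\ast$ and upward closure yields $u\in D$. The identity $S_{D_{S}}=S$ is declared analogous and monotonicity is left as a routine check. You instead compose the congruence-lattice isomorphisms already assembled in Lemmas \ref{Fil(A) to Fil(C(A))} and \ref{Fil(C(A)) to Fil(A)}, exploiting $S_{D}=1/w(\theta_{D})$ and $D_{S}=1/m(S)$ together with the cancellations $\widetilde{m}\,w=\mathrm{id}$ and $w\,\widetilde{m}=\mathrm{id}$; these do hold, since $w_{\mathbf{A}}=h_{\mathbf{A}}^{-1}$ by construction ($r_{\mathbf{A}}=\beta_{\mathbf{A}}^{-1}$) and $k=g^{-1}$ follows from Theorem \ref{iso congruencias via funtor K} and Lemma \ref{iso theta} (one checks $\theta^{\gamma_{\theta}}=\theta$ directly and then uses surjectivity), so the diagram chase you flag as the main obstacle is indeed routine. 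Your approach buys uniformity: both round trips fall out of the same cancellation, you never need the residuation inequality $(u\vee c)\ast(c\Rightarrow u)\le u$, and the second round trip is genuinely symmetric rather than merely asserted to be ``analogous.'' The paper's approach is shorter and more elementary, working entirely inside the filters without invoking the congruence machinery. Both are valid proofs of the theorem.
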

\begin{proof}
We shall prove that $D_{S_{D}}=D$. On the one hand, let $u\in D$. Then from the fact that $D$ is an up-set and $u \leq c\Rightarrow u$, we get that $u\vee c$ and $c\Rightarrow u$ belong to $D\cap C(A)$. Thus, $u\in D_{S_{D}}$. On the other hand, if $u\vee c, c\Rightarrow u\in D\cap C(A)$, since $(u\vee c)\ast(c\Rightarrow u)\leq u$ and $D$ is a tense filter, we conclude $u\in D$. The proof of $S_{D_{S}}=S$ is analogue. Straightforward calculations prove that both assignments are monotone. 
\end{proof}

We conclude this section by noticing that Theorems \ref{iso congruencias via funtor K} and  \ref{congruences tICRDL}, and Corollary \ref{congruencias tDRL} may also be applied in order to make explicit the relation between the tense filters of a tense ICRDL-algebra $\mathbf{L}$ and the tense filters of $K(\mathbf{L})$. To this end, let $\Theta$ be the lattice isomorphism between $\mathrm{tFi}(\mathbf{L})$ and $\mathsf{Con}(\mathbf{L})$ of Theorem \ref{congruences tICRDL}, $w$ be the isomorphism between $\mathsf{Con}(\mathbf{L})$ and $\mathsf{Con}(K(\mathbf{L}))$ of Theorem \ref{iso congruencias via funtor K} and $\mu$ be the isomorphism between $\mathsf{Con}(K(\mathbf{L}))$ and $\mathrm{tFi}(K(\mathbf{L}))$ of Corollary \ref{congruencias tDRL}. Then, if $g$ denotes the composition of these, it is clear that $g$ establishes an isomorphism between $\mathrm{tFi}(\mathbf{L})$ and $\mathrm{tFi}(K(\mathbf{L}))$. Moreover, if $S\in \mathrm{tFi}(\mathbf{L})$ and $J\in \mathrm{tFi}(K(\mathbf{L}))$, straightforward calculations show that 
\[
g(S)=\{ (x,y)\in K(A)\colon x\in S, \neg y\in S\}=J_{S}
\]
and 
\[
g^{-1}(J)=\{ a\in L\colon (a,0)\in J\}=S_{J}.
\]
The latter discussion may be summarized in the following result.
\begin{theorem}
Let $\mathbf{L}$ be a tense ICRDL-algebra, $S\in \mathrm{tFi}(\mathbf{L})$ and $J\in \mathrm{tFi}(K(\mathbf{A}))$. Then the assignments $S\rightarrow J_{S}$ and $J\mapsto S_{J}$ establish a mutually-inverse poset isomorphism between $\mathrm{tFi}(\mathbf{L})$ and $\mathrm{tFi}(K(\mathbf{L}))$.
\end{theorem}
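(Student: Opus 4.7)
The plan is to realize the desired isomorphism as the composition of the three isomorphisms already at hand, and then compute the composite explicitly to recover the formulas $S \mapsto J_S$ and $J \mapsto S_J$. Set
\[
g = \mu \circ w \circ \Theta,
\]
where $\Theta \colon \mathrm{tFi}(\mathbf{L}) \to \mathsf{Con}(\mathbf{L})$ is the isomorphism $S \mapsto \theta_S$ from Theorem \ref{congruences tICRDL}, $w \colon \mathsf{Con}(\mathbf{L}) \to \mathsf{Con}(K(\mathbf{L}))$ is the isomorphism $\theta \mapsto \gamma_\theta$ from Theorem \ref{iso congruencias via funtor K}, and $\mu \colon \mathsf{Con}(K(\mathbf{L})) \to \mathrm{tFi}(K(\mathbf{L}))$ is the isomorphism $\gamma \mapsto 1/\gamma$ obtained by applying Corollary \ref{congruencias tDRL} to the tense DRL-algebra $K(\mathbf{L})$. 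Since each factor is a poset isomorphism, so is $g$, and the theorem reduces to identifying $g$ with $S \mapsto J_S$ and $g^{-1}$ with $J \mapsto S_J$.

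For the forward direction, I would unravel the three steps. Given $S \in \mathrm{tFi}(\mathbf{L})$, the congruence $\theta_S$ satisfies $(x,y) \in \theta_S$ iff $x \circ y \in S$, where $x \circ y = (x \to y) \cdot (y \to x)$. By the definition of $\gamma_\theta$ in Section \ref{sec6},
\[
\gamma_{\theta_S} = \{((a,b),(x,y)) \in K(L)^2 : (a,x), (b,y) \in \theta_S\}.
\]
Since the top of $K(\mathbf{L})$ is $1 = (1,0)$, I obtain
\[
g(S) = 1/\gamma_{\theta_S} = \{(a,b) \in K(L) : a \circ 1 \in S \text{ and } b \circ 0 \in S\}.
\]
Using $a \circ 1 = a$ (since $a \to 1 = 1$ and $1 \to a = a$) and $b \circ 0 = \neg b$ (since $b \to 0 = \neg b$ and $0 \to b = 1$), this simplifies to $\{(a,b) \in K(L) : a \in S \text{ and } \neg b \in S\} = J_S$, as required.

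The reverse direction is dual. Writing $\gamma = \mu^{-1}(J)$, by construction $1/\gamma = J$, and the definition of the inverse of $w$ yields $w^{-1}(\gamma) = \theta^\gamma = \{(a,b) \in L^2 : (a,0)\,\gamma\,(b,0)\}$. Taking its $1$-class under $\Theta^{-1}$ gives
\[
g^{-1}(J) = \{a \in L : (a,0)\,\gamma\,(1,0)\} = \{a \in L : (a,0) \in 1/\gamma\} = \{a \in L : (a,0) \in J\} = S_J.
\]
No real obstacle is expected; the only delicate computational points are recognizing the top of $K(\mathbf{L})$ as $(1,0)$ and keeping track of the identities $a \circ 1 = a$ and $b \circ 0 = \neg b$ at the final simplification. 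Tense-filter-ness of $J_S$ and congruence-ness of the intermediate objects need not be re-verified, as they are transported automatically along the already-established chain of isomorphisms.
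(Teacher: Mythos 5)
Your proposal is correct and follows essentially the same route as the paper: the paper also obtains the isomorphism as the composite $g=\mu\circ w\circ\Theta$ of the three isomorphisms from Theorem \ref{congruences tICRDL}, Theorem \ref{iso congruencias via funtor K} and Corollary \ref{congruencias tDRL}, and then identifies $g(S)=J_S$ and $g^{-1}(J)=S_J$. The only difference is that you carry out the "straightforward calculations" (via $a\circ 1=a$ and $b\circ 0=\neg b$) that the paper leaves implicit.
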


\section{A contextual translation}\label{sec7}

Let $\mathcal{V}$ and $\mathcal{W}$ be varieties. Let us write $\mathsf{V}$ and $\mathsf{W}$ for the categories of algebras and homomorphisms of $\mathcal{V}$ and $\mathcal{W}$, respectively. In \cite{M2018}, Moraschini proved that there is a correspondence between the nontrivial adjunctions between $\mathsf{V}$ and $\mathsf{W}$ (i.e. pairs of functors $F \colon \mathsf{V} \rightarrow \mathsf{W}$ and $G \colon \mathsf{W} \rightarrow \mathsf{V}$ such that $F$ is left adjoint to $G$ (denoted by $F\dashv G$)) and \emph{nontrivial $\kappa$-contextual translations} from the equational consequence relation relative to $\mathcal{V}$ into the equational consequence relation  relative to $\mathcal{W}$, denoted by $\models_{\mathcal{V}}$ and $\models_{\mathcal{W}}$, respectively. In this setting, $\kappa$ is assumed to be a non necessarily finite cardinal. Classical examples of $\kappa$-contextual translations are the Kolmogorov translation induced by the well known adjunction coming from the Kalman construction for distributive lattices \cite{Cig1986}, the Glivenko translation induced by Glivenko's functor that relates Boolean algebras and Heyting algebras \cite{Kol} and the G\"odel translation which relates the equational consequence relative to Heyting algebras and interior algebras \cite{G1933}.  

The aim of this section is to show that Theorem \ref{equivalence theorem} can be employed to obtain a finite nontrivial 2-contextual translation between the equational consequence relations relative to tense DRL-algebras (or tDRL-algebras, for short) and tense ICDRL-algebras (or tIRL-algebras, for short). In order to make this section self-contained, we proceed to recall the definitions that we will require on the following.

Let $X$ be a set of variables. If $\mathcal{L}$ is a propositional language we write $\mathbf{Fm}_{\mathcal{L}}(X)$ for the absolutely free algebra of $\mathcal{L}$-terms.  If $\mathcal{V}$ is a variety, we write $\mathbf{F}_{\mathcal{V}}(X)$ for the $\mathcal{V}$-free algebra over $X$. Further, if $\mathcal{L}$ is the language of $\mathcal{V}$ and $t(x_1,...,x_n)$ is an $\mathcal{L}$-formula, we also denote by $t(x_1,...,x_n)$ its image under the natural map $\mathbf{Fm}_{\mathcal{L}}(X)  \rightarrow \mathbf{F}_{\mathcal{V}}(X)$ from the term algebra $\mathbf{Fm}_{\mathcal{L}}(X)$ over $X$ onto $\mathbf{F}_{\mathcal{V}}(X)$. In particular, if $X=\{x_1,...,x_k\}$, then we write $\mathbf{F}_{\mathcal{V}}(k)$ instead of $\mathbf{F}_{\mathcal{V}}(X)$. We also denote by $\mathcal{V}$ the category associated to the variety. Let $\mathbf{A}\in \mathcal{V}$. If $S\subseteq A\times A$, we write $\mathsf{Cg}^{\mathbf{A}}(S)$ for the congruence generated by $S$. We also write $\mathsf{Cg}^{\mathbf{A}}(\vec{a},\vec{b})$, for the congruence generated by all pairs $(a_{1} , b_{1} ), \dots, (a_{N} , b_{N})$ where $\vec{a}, \vec{b}\in A^{N}$. We say that a congruence $\theta$ on $\mathbf{A}$ is \emph{finitely generated} if $\theta=\mathsf{Cg}^{\mathbf{A}}(S)$ for some finite set $S\subseteq A\times A$.  We recall that an algebra $\mathbf{A}$ in $\mathcal{V}$ is a \emph{finitely generated free algebra} if it is isomorphic to $\mathbf{F}_{\mathcal{V}}(m)$ for some finite $m$, and \emph{finitely presented} if it is isomorphic to an algebra of the form $\mathbf{F}_{\mathcal{V}}(k)/\theta$, for some $k$ finite and $\theta$ finitely generated congruence on $\mathbf{F}_{\mathcal{V}}(k)$.

If $l \colon X \rightarrow Y$ is a function, it is well known that there is a unique homomorphism $\mathbf{F}_{\mathcal{V}}(l) \colon \mathbf{F}_{\mathcal{V}}(X)\rightarrow \mathbf{F}_{\mathcal{V}}(Y)$ extending the map $\alpha(x)=l(x)$. If $(x_1,...,x_n)\in X^{n}$ and $t(x_1,...,x_n)$ is a $\mathcal{L}$-term with variables in $\{x_1,...,x_n\}$, then it is well known that $\mathbf{F}_{\mathcal{V}}(l)(t(x_1,...,x_n))=t(l(x_1),...,l(x_n))$. Let $\mathcal{F}=\{\mathbf{A}_{i}\}_{i\in I}$ be a family of algebras of $\mathcal{V}$ and let us assume w.l.o.g. that their universes are pair-wise disjoint. Let $\pi_i \colon \mathbf{F}_{\mathcal{V}}(A_{i})\rightarrow \mathbf{A}_{i}$ be the unique onto-homomorphism extending the identity of $A_{i}$. Then, if  $X=\bigcup_{i\in I} A_{i}$ and $\gamma =\mathsf{Cg}^{\mathbf{F}_{\mathcal{V}(X)}}(\bigcup_{i\in I} \text{Ker} (\pi_i))$ it is well known that $\mathbf{F}_{\mathcal{V}}(X)/\gamma$ toghether with the homomorphisms $f_{i} \colon \mathbf{A}_{i}\rightarrow \mathbf{F}_{\mathcal{V}}(X)/\gamma$ defined by $a\mapsto a/\gamma$, is isomorphic to the coproduct of $\mathcal{F}$, denoted by $\sum_{i\in I}\mathbf{A}_{i}$. In particular, if $I=\{1,...,n\}$, then we write $\mathbf{A}_1 + \ldots + \mathbf{A}_n$ instead of $\sum_{i\in I}\mathbf{A}_{i}$.

\subsection{2-contextual translations} 

Let $\mathcal{K}$ be a class of similar algebras and let $\mathcal{L}_{\mathcal{K}}$ be the language of $\mathcal{K}$. Then $\mathcal{L}_{\mathcal{K}}^{2}$ is the algebraic language whose $n$-ary operations (for every $n\in \mathbb{N}$) are all pairs of $\mathcal{L}_{\mathcal{K}}$-terms built up with the variables $X_n = \{x_j^{1}\colon 1\leq j\leq n\} \cup \{x_j^{2}\colon 1\leq j\leq n\}$. That is to say, 
\[
f((x_1^{1},x_1^{2}),...,(x_n^{1},x_n^{2}))=(t_1(x_1^{1},x_1^{2},...,x_n^{1},x_n^{2}),t_2(x_1^{1},x_1^{2},...,x_n^{1},x_n^{2})).
\]
where $t_1,t_2\in Tm_{\mathcal{L}_{\mathcal{K}}}(X_n)$. Thus, for instance, if we set the language of binary operations $\mathcal{L}_{\mathcal{K}}=\{+,\cdot\}$, then the operation $\oplus$, defined as \[\oplus ((x_1^{1},x_1^{2}),(x_2^{1},x_2^{2})):=(x_{2}^{2}\cdot (x_1^{1}+x_2^{1}), x_1^{2}\cdot x_2^{1}),\]
is a binary operation on $\mathcal{L}_{\mathcal{K}}^{2}$.

Let $\mathbf{A}\in \mathcal{K}$. We write $\mathbf{A}^{[2]}$ for the algebra of type $\mathcal{L}_{\mathcal{K}}^{2}$ with universe $A^{2}$ where every $n$-ary operation $f\in \mathcal{L}_{\mathcal{K}}^{2}$  is interpreted as
\[
f^{\mathbf{A}^{[2]}}((a_1^{1},a_1^{2}),...,(a_n^{1},a_n^{2})):=(t_1^{\mathbf{A}}(a_1^{1},a_1^{2},...,a_n^{1},a_n^{2}),t_2^{\mathbf{A}}(a_1^{1},a_1^{2},...,a_n^{1},a_n^{2})),
\]
for every $(a_j^{1},a_j^{2})\in A^{2}$, with $1\leq j\leq n$.

By the \emph{2-matrix power} of $\mathcal{K}$ me mean the class $\mathbb{I}\{\mathbf{A}^{[2]} \colon \mathbf{A}\in \mathcal{K}\}$. Such a class will be denoted by $\mathcal{K}^{[2]}$. It is well known that $\mathcal{K}^{[2]}$ is a variety if and only if $\mathcal{K}$ is a variety (Theorem 2.3 (iii) of \cite{Mc1996}). Moreover, if $\mathbf{A},\mathbf{B}\in \mathcal{K}$ and $h \colon \mathbf{A} \rightarrow \mathbf{B}$ is a homomorphism, then the map $h \times h \colon \mathbf{A}^{2}\rightarrow \mathbf{B}^{2}$ is a homomorphism. So, if we set $h^{[2]} = h\times h$, then it is easy to see that the assignments $\mathbf{A} \mapsto \mathbf{A}^{[2]}$ and $h\mapsto h^{[2]}$ establish a functor $^{[2]} \colon \mathcal{K} \rightarrow \mathcal{K}^{[2]}$.

Let $\mathcal{L}_{\mathcal{V}}$ and $\mathcal{L}_{\mathcal{W}}$ be the languages of the varieties $\mathcal{V}$ and $\mathcal{W}$, respectively. A \emph{2-translation} of $\mathcal{L}_{\mathcal{V}}$ into $\mathcal{L}_{\mathcal{W}}$ is a map $\tau \colon \mathcal{L}_{\mathcal{V}}\rightarrow \mathcal{L}_{\mathcal{W}}^{2}$ that preserves the arities of function symbols. That is to say, if $\psi$ is a $n$-ary function symbol of $\mathcal{L}_{\mathcal{V}}$, then 
\[
\tau(\psi)((x_1^{1},x_1^{2}),...,(x_n^{1},x_n^{2}))=(t_1(x_1^{1},x_1^{2},...,x_n^{1},x_n^{2}),t_2(x_1^{1},x_1^{2},...,x_n^{1},x_n^{2}))
\]
for some $t_1,t_2\in Tm_{\mathcal{L}_{\mathcal{K}}}(X_n)$. It is worth recalling that $\tau$ maps constant symbols into pairs of constant symbols. Therefore for a translation to exists, both languages are required to contain at least one constant symbol. Observe that 2-translations extend to arbitrary terms as follows: if $\lambda$ is a cardinal, we write $X_{\lambda}$ for the set $\{x_j^{1}\colon j<\lambda\}\cup \{x_j^{2}\colon j<\lambda\}$. Thus, we define recursively a map 
\[
\tau_{\ast} \colon Tm(\mathcal{L}_{\mathcal{V}}, \lambda)\rightarrow Tm(\mathcal{L}_{\mathcal{W}}, X_{\lambda})^{2}
\]
as
\begin{displaymath}
\begin{array}{rclc}
\tau_{\ast} (x_j) & :=  & (x_j^{1}, x_j^{2}), & \text{for every $j<\lambda$,}\\
\tau_{\ast}(c)& := & \tau(c). &
\end{array}
\end{displaymath}
For variables and constant symbols. If $\psi \in  \mathcal{L}_{\mathcal{V}}$ is $n$-ary and $\varphi_{1},..., \varphi_{n}\in Tm(\mathcal{L}_{\mathcal{V}}, X_{\lambda})$, then 
\begin{displaymath}
\begin{array}{rcl}
\tau_{\ast} (\psi(\varphi_{1},..., \varphi_{n})) & :=  & \tau(\psi)(\tau_{\ast}(\varphi_{1}),..., \tau_{\ast}(\varphi_{n})).
\end{array}
\end{displaymath}
Furthermore, the map $\tau_{\ast}$ may be raised to sets of equations in order to produce a new map 
\[
\tau^{\ast}:\mathcal{P}(\mathrm{Eq}(\mathcal{L}_{\mathcal{V}}, \lambda)) \rightarrow \mathcal{P}(\mathrm{Eq}(\mathcal{L}_{\mathcal{W}}, X_{\lambda})
)
\]
through the assignment
\begin{displaymath}
\begin{array}{ccc}
\Phi & \longmapsto & \{\tau_{\ast}(\varepsilon)\approx  \tau_{\ast}(\delta)\colon \varepsilon\approx \delta \in \Phi\}.
\end{array}
\end{displaymath}

A \emph{2-contextual translation} of $\models_{\mathcal{V}}$ into $\models_{\mathcal{W}}$ is a pair $(\tau, \Theta)$ where $\tau$ is a 2-translation of $\mathcal{L}_{\mathcal{V}}$ into $\mathcal{L}_{\mathcal{W}}$ and $\Theta \subseteq \mathrm{Eq}(\mathcal{L}_{\mathcal{W}},2)$ is a set of equations written with variables among $\{x_{j}\colon j<\lambda\}$ such that: 
\begin{enumerate}
\item For every $n$-ary $\psi\in \mathcal{L}_{\mathcal{V}}$: 
\[
\Theta(x_1^{1},x_1^{2})\cup \ldots \cup \Theta(x_n^{1},x_n^{2}) \models_{\mathcal{W}} \Theta(\tau_{\ast}(\psi) ((x_1^{1},x_1^{2}),...,(x_n^{1},x_n^{2}))).
\]
\item For every cardinal $\lambda$ and $\Phi \cup \{ \varepsilon \approx \delta\}\subseteq \mathrm{Eq}(\mathcal{L}_{\mathcal{V}},\lambda)$:
\begin{displaymath}
\begin{array}{ccc}
\Phi \models_{\mathcal{V}} \epsilon \approx \delta & \Rightarrow  &   \tau_{\ast}(\Phi)\cup \bigcup\limits_{j<\lambda}\Theta(x_{j}^{1},x_{j}^{2})\models_{\mathcal{W}}\tau_{\ast}(\varepsilon \approx \delta).
\end{array}
\end{displaymath}
\end{enumerate}
The set $\Theta$ is called the \emph{context} of the 2-contextual translation $(\tau, \Theta)$. We say that a 2-contextual translation is \emph{nontrivial} if there is a (nonempty) sequence $\vec{\varphi}\in Tm(\mathcal{L}_{\mathcal{W}}, \emptyset)^{2}$ of constant symbols such that if $\mathcal{W}\models \Theta(\vec{\varphi})$ then there is $i_0\in \{1,2\}$ and sequences of variables $\vec{x}=(x^1,x^2)$ and $\vec{y}=(y^1,y^2)$ such that 
\[
\Theta(\vec{x})\cup \Theta(\vec{y})\not\models_{\mathcal{W}} x^{i_0}\approx y^{i_0}.
\]
Now, we proceed to present some technical results that we will require in order to establish the main result of this section.

\begin{lemma} \label{Technical lemma adjunction}
Let $\mathbf{A}$ be a tense ICDRL-algebra. Then, for all $a,b\in A$ such that $a\cdot b=0$, there exists a unique homomorphism $\psi \colon C(\mathbf{F}_{t\mathcal{DRL}}(1))\rightarrow \mathbf{A}$ such that $\psi(z\vee c)=a$ and $\psi(\sim z\vee c)=b$.
\end{lemma}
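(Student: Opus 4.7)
The plan is to extract the desired homomorphism from the universal property of the free tense DRL-algebra $\mathbf{F}:=\mathbf{F}_{t\mathcal{DRL}}(1)$ on its generator $z$, transferred to the tense ICRDL-side via the Kalman functors together with the natural isomorphism $\alpha_{\mathbf{A}}$ of Lemma~\ref{lemma alpha map}. Existence will follow from a short diagram chase, whereas uniqueness will rest on the following generation claim: the set $\{z\vee c,\,\sim z\vee c\}$ generates $C(\mathbf{F})$ as a tense ICRDL-algebra.

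For existence, since $a\cdot b=0$ one has $(a,b)\in K(A)$, and by Lemma~\ref{kalmanH} the algebra $K(\mathbf{A})$ is a tense DRL-algebra. The universal property of $\mathbf{F}$ produces a unique tense DRL-homomorphism $\varphi\colon \mathbf{F}\to K(\mathbf{A})$ with $\varphi(z)=(a,b)$. I would then set $\psi:=\alpha_{\mathbf{A}}^{-1}\circ C(\varphi)$ and use that $\varphi$ preserves $c=(0,0)$ to get, directly inside $K(\mathbf{A})$, the identities $\varphi(z\vee c)=(a,b)\vee(0,0)=(a,0)$ and, since $\sim(a,b)=(b,a)$, $\varphi(\sim z\vee c)=(b,0)$. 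Because $\alpha_{\mathbf{A}}(x)=(x,0)$ by Lemma~\ref{lemma alpha map}, applying $\alpha_{\mathbf{A}}^{-1}$ yields $\psi(z\vee c)=a$ and $\psi(\sim z\vee c)=b$, as required.

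For uniqueness, I would prove by induction on the complexity of a tense DRL-term $t(z)$ that \emph{both} $t(z)\vee c$ and $\sim t(z)\vee c$ belong to the tense ICRDL-subalgebra $S$ of $C(\mathbf{F})$ generated by $\{z\vee c,\,\sim z\vee c\}$. The base and constant cases are immediate; the lattice cases follow by distributivity and the De Morgan identities for $\sim$; the multiplicative case exploits Lemma~\ref{lem: technical DRL}~(1), which rewrites $(u\ast v)\vee c$ as the tense ICRDL-product $(u\vee c)\cdot(v\vee c)$ on $C(\mathbf{F})$; and the tense cases follow from Proposition~\ref{propiedades tDRLc}~(c2) together with the identity $\sim G(v)\vee c=\sim(G(v)\wedge c)=F(\sim v\vee c)$, obtained from $F(x)=\sim G(\sim x)$ and $\sim c=c$. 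Once generation is established, any two tense ICRDL-homomorphisms out of $C(\mathbf{F})$ that agree on $\{z\vee c,\,\sim z\vee c\}$ must coincide.

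The crux of the argument is the $\sim$-of-product case, and this is precisely where Lemma~\ref{lem: technical DRL}~(2) comes into play: taking $x=v$ and $y=w$ in that lemma, and recognising each factor $\sim(A\ast\sim B)$ as the tense ICRDL-implication $A\to B$ on $C(\mathbf{F})$, one obtains
\[
\sim(v\ast w)\vee c=\bigl((v\vee c)\to(\sim w\vee c)\bigr)\wedge\bigl((w\vee c)\to(\sim v\vee c)\bigr),
\]
which places $\sim(v\ast w)\vee c$ inside $S$ via the inductive hypothesis on $v$ and $w$. With this step handled, the remaining bookkeeping in the induction is routine, and the lemma follows.
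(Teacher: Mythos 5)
Your proposal is correct, and the existence half coincides with the paper's: both take the unique tense DRL-homomorphism $\varphi\colon\mathbf{F}_{t\mathcal{DRL}}(1)\to K(\mathbf{A})$ with $\varphi(z)=(a,b)$ and set $\psi=\alpha_{\mathbf{A}}^{-1}\circ C(\varphi)$, then compute $\varphi(z\vee c)=(a,0)$ and $\varphi(\sim z\vee c)=(b,0)$. Where you genuinely diverge is uniqueness. The paper argues categorically: given another $\psi'$ with the stated properties, its adjoint transpose $\psi'_{*}=K(\psi')\circ\beta_{\mathbf{F}_{t\mathcal{DRL}}(1)}$ sends $z$ to $(a,b)$, hence equals $\varphi$ by freeness, and the bijectivity of transposition under $C\dashv K$ forces $\psi'=\psi$ --- a two-line argument, but one that leans entirely on the adjunction. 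You instead prove that $\{z\vee c,\sim z\vee c\}$ generates $C(\mathbf{F}_{t\mathcal{DRL}}(1))$ as a tense ICRDL-algebra, by induction on DRL-terms, showing that $t(z)\vee c$ and $\sim t(z)\vee c$ always lie in the generated subalgebra; I checked the delicate cases and they go through: the product case is Lemma~\ref{lem: technical DRL}~(1), the $\sim$-of-product case is exactly Lemma~\ref{lem: technical DRL}~(2) read as $\sim(v\ast w)\vee c=((v\vee c)\to(\sim w\vee c))\wedge((w\vee c)\to(\sim v\vee c))$, and the tense cases use (c2) together with $\sim G(v)\vee c=\sim(G(v)\wedge G(c))=\sim G(\sim(\sim v\vee c))=F(\sim v\vee c)$. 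Your route is more computational but also more informative: the generation fact you establish is essentially the content of Theorem~\ref{theorem C} (that $C(\mathbf{F}_{t\mathcal{DRL}}(1))$ is a two-generated quotient of $\mathbf{F}_{t\mathcal{ICRL}}(2)$), which the paper only extracts later and again via the universal property; it also explains in advance why Lemma~\ref{lem: technical DRL} is needed, whereas the paper first invokes it only in the proof of Lemma~\ref{lem: 2-translation}. The trade-off is length: the paper's uniqueness argument is shorter, but yours is self-contained and does not presuppose the full strength of the adjunction.
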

\begin{proof}
Let $a,b\in A$ such that $a\cdot b=0$ and consider the assignment $\varepsilon(z)=(a,b)$. If we take $h \colon \mathbf{F}_{t\mathcal{DRL}}(1)\rightarrow K(\mathbf{A})$ as the unique homomorphism that extends $\varepsilon$, since $C\dashv K$ (by Theorem \ref{equivalence theorem}), there exists a unique homomorphism $\psi \colon C(\mathbf{F}_{t\mathcal{DRL}}(1))\rightarrow \mathbf{A}$. For general reasons, it follows that $\psi=\alpha^{-1}_{\mathbf{A}}C(h)$, where $\alpha_{\mathbf{A}}$ is the map of Lemma \ref{lemma alpha map}. Observe that 
\begin{displaymath}
\begin{array}{ccl}
\psi(\sim z\vee c)  &  = & \alpha^{-1}_{\mathbf{A}}(h(\sim z\vee c)) \\
                             & = & \alpha^{-1}_{\mathbf{A}}(\sim h(z)\vee (0,0))) \\
                             & = & \alpha^{-1}_{\mathbf{A}}(\sim (a,b)\vee (0,0)) \\
                             & = & \alpha^{-1}_{\mathbf{A}}((b,a)\vee (0,0)) \\
                             & = & \alpha^{-1}_{\mathbf{A}}(b,0) \\
                             & = & b. \\
\end{array}   
\end{displaymath}
In a similar fashion it can be proved that $\psi(z\vee c)=a$. Now, suppose that there exists another homomorphism $\psi' \colon C(\mathbf{F}_{t\mathcal{DRL}}(1))\rightarrow \mathbf{A}$ such that $\psi'(z\vee c)=a$ and $\psi'(\sim z\vee c)=b$. Then, since $C\dashv K$ (Theorem \ref{equivalence theorem}), there exist a unique $\psi'_{\ast} \colon \mathbf{F}_{t\mathcal{DRL}}(1)\rightarrow K(\mathbf{A})$. We stress that from general reasons, $\psi'_{\ast}=K(\psi')\beta_{\mathbf{F}_{t\mathcal{DRL}}(1)}$. Therefore,
\begin{displaymath}
\begin{array}{ccl}
\psi'_{\ast}(z) & = & K(\psi')(z\vee c,\sim z\vee c) \\
                     & = & (\psi'(z\vee c),\psi'(\sim z\vee c)) \\
                     & = & (a,b) \\
                     & = & \varepsilon(z) \\
\end{array}
\end{displaymath}
so $\psi'_{\ast}=h$ and consequently, $\psi'=\psi$, as claimed.
\end{proof}

\begin{corollary} \label{coro: ampliacion}
Let $\mathbf{A}$ be a tense ICDRL-algebra. Then, for all $a_1, a_2, b_1,b_2\in A$ such that $a_1\cdot b_1=0$ and $a_2\cdot b_2=0$, there exists a unique homomorphism $\mu \colon C(\mathbf{F}_{t\mathcal{DRL}}(2))\rightarrow \mathbf{A}$ such that $\mu(x\vee c)=a_1$, $\mu(y\vee c)=a_2$, $\mu(\sim x\vee c)=b_1$ and $\mu(\sim y\vee c)=b_2$.   
\end{corollary}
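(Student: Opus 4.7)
The plan is to mimic the argument of Lemma \ref{Technical lemma adjunction}, simply upgrading from one generator to two. I would first use the constraint $a_i\cdot b_i=0$ (for $i=1,2$) to ensure that the pairs $(a_1,b_1)$ and $(a_2,b_2)$ actually lie in $K(\mathbf{A})$. Then I would define the assignment $\varepsilon:\{x,y\}\to K(\mathbf{A})$ by $\varepsilon(x)=(a_1,b_1)$ and $\varepsilon(y)=(a_2,b_2)$, and let $h:\mathbf{F}_{t\mathcal{DRL}}(2)\to K(\mathbf{A})$ be the unique homomorphism extending $\varepsilon$, which exists by the universal property of the free tense DRL-algebra on two generators.

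Next, I would invoke the adjunction $C\dashv K$ from Theorem \ref{equivalence theorem}: the homomorphism $h$ corresponds, under this adjunction, to a unique homomorphism $\mu:C(\mathbf{F}_{t\mathcal{DRL}}(2))\to \mathbf{A}$, and for the same abstract reason as in the proof of the lemma we have $\mu=\alpha_{\mathbf{A}}^{-1}\circ C(h)$, where $\alpha_{\mathbf{A}}$ is the isomorphism of Lemma \ref{lemma alpha map}. To verify the four required values, I would simply compute, for each generator:
\begin{displaymath}
\begin{array}{rcl}
\mu(x\vee c) & = & \alpha_{\mathbf{A}}^{-1}(h(x)\vee (0,0)) \\
& = & \alpha_{\mathbf{A}}^{-1}((a_1,b_1)\vee (0,0)) \\
& = & \alpha_{\mathbf{A}}^{-1}(a_1,0) \;=\; a_1,
\end{array}
\end{displaymath}
and analogously $\mu(y\vee c)=a_2$, while $\mu(\sim x\vee c)=\alpha_{\mathbf{A}}^{-1}(\sim(a_1,b_1)\vee (0,0))=\alpha_{\mathbf{A}}^{-1}(b_1,0)=b_1$, and similarly $\mu(\sim y\vee c)=b_2$.

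For uniqueness, I would copy the corresponding argument of the lemma: any other $\mu':C(\mathbf{F}_{t\mathcal{DRL}}(2))\to \mathbf{A}$ meeting the four equalities transposes under the adjunction to a homomorphism $\mu'_{\ast}=K(\mu')\circ\beta_{\mathbf{F}_{t\mathcal{DRL}}(2)}$, and a direct calculation of $\mu'_{\ast}(x)$ and $\mu'_{\ast}(y)$ shows that they coincide with $\varepsilon(x)$ and $\varepsilon(y)$ respectively, so $\mu'_{\ast}=h$ by freeness, and then $\mu'=\mu$ by the uniqueness clause of the adjunction. I do not expect any genuine obstacle here: the only point that deserves a tiny bit of care is checking that $(a_i,b_i)\in K(\mathbf{A})$, which is immediate from the hypothesis, and making sure the bookkeeping with two generators through $\beta_{\mathbf{F}_{t\mathcal{DRL}}(2)}$ goes through unchanged.
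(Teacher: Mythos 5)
Your proposal is correct and follows essentially the same route as the paper: the paper's proof likewise defines $\varepsilon(x)=(a_1,b_1)$, $\varepsilon(y)=(a_2,b_2)$, extends to $h\colon \mathbf{F}_{t\mathcal{DRL}}(2)\to K(\mathbf{A})$, and then declares the rest analogous to Lemma \ref{Technical lemma adjunction}, which is exactly the adjunction-transposition argument you carry out in detail.
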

\begin{proof}
Consider the assignment defined by $\varepsilon(x)=(a_1,b_1)$ and $\varepsilon(y)=(a_2,b_2)$, and take $h \colon \mathbf{F}_{t\mathcal{DRL}}(2)\rightarrow K(\mathbf{A})$ as the unique homomorphism extending $\varepsilon$. It is no hard to see that the rest of the proof is analogue to the one of Lemma \ref{Technical lemma adjunction}.
\end{proof}

Let $\mathcal{V}$ be a variety, $\mathbf{A}\in \mathcal{V}$ and $\vec{a}, \vec{b}\in A^{N}$, with $N\in \mathbb{N}$. It is well known that the canonical homomorphism $\nu \colon \mathbf{A}\rightarrow \mathbf{A}/\mathsf{Cg}^{\mathbf{A}}(\vec{a}, \vec{b})$ has the universal property of identify $\vec{a}$ with $ \vec{b}$, in the sense that for every homomorphism $h \colon \mathbf{A} \rightarrow \mathbf{B}$ such that $g(a_i)=g(b_i)$ for every $1\leq i \leq N$, there exists a unique homomorphism $g \colon \mathbf{A}/\mathsf{Cg}^{\mathbf{A}}(\vec{a}, \vec{b})\rightarrow \mathbf{B}$ such that $h\nu=g$. This fact implies that in order to prove that an algebra $\mathbf{C}$ of $\mathcal{V}$ is isomorphic to $\mathbf{A}/\mathsf{Cg}^{\mathbf{A}}(\vec{a}, \vec{b})$ it is enough to find a homomorphism with domain $\mathbf{A}$ with the universal property mentioned above. This observation will be crucial for showing the following result.

\begin{theorem}\label{theorem C}
 $C(\mathbf{F}_{t\mathcal{DRL}}(1))$ is isomorphic to $\mathbf{F}_{t\mathcal{ICRL}}(2)/\mathsf{Cg}^{\mathbf{F}_{t\mathcal{IRL}}(2)}(x\cdot y, 0)$.
\end{theorem}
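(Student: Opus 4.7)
The plan is to exhibit the quotient $\mathbf{Q} := \mathbf{F}_{t\mathcal{ICRL}}(2)/\mathsf{Cg}^{\mathbf{F}_{t\mathcal{ICRL}}(2)}(x\cdot y, 0)$ and $C(\mathbf{F}_{t\mathcal{DRL}}(1))$ as representing the same functor on $\mathsf{tICRDL}$, and then invoke uniqueness to obtain the desired isomorphism. Writing $\bar{x}, \bar{y}$ for the images of $x,y$ in $\mathbf{Q}$, the universal property of $\mathbf{Q}$ is clear: for every tense ICRDL-algebra $\mathbf{A}$ and every pair $a,b\in A$ with $a\cdot b=0$, there is a unique homomorphism $\mathbf{Q}\to \mathbf{A}$ sending $\bar{x}\mapsto a$ and $\bar{y}\mapsto b$. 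On the other hand, Lemma \ref{Technical lemma adjunction} asserts precisely that $C(\mathbf{F}_{t\mathcal{DRL}}(1))$ satisfies the very same universal property with respect to the distinguished elements $z\vee c$ and $\sim z\vee c$.

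To turn this into an explicit argument, the key preliminary calculation is to verify that $(z\vee c)\cdot (\sim z\vee c)=0$ holds in $C(\mathbf{F}_{t\mathcal{DRL}}(1))$, where $\cdot$ denotes the induced product $u\cdot v=(u\ast v)\vee c$ and $0$ is the bottom $c$ of the ICRDL-structure. Using Lemma \ref{lem: technical DRL}(1) we obtain
\[
(z\vee c)\cdot(\sim z\vee c)=[(z\vee c)\ast(\sim z\vee c)]\vee c=(z\ast \sim z)\vee c,
\]
and since in any integral involutive residuated lattice $z\ast \sim z=0$, this collapses to $c$, i.e.\ to $0$ in the ICRDL-structure. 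Hence the universal property of $\mathbf{Q}$ yields a unique homomorphism $\bar{\varphi}\colon \mathbf{Q}\to C(\mathbf{F}_{t\mathcal{DRL}}(1))$ with $\bar{\varphi}(\bar{x})=z\vee c$ and $\bar{\varphi}(\bar{y})=\sim z\vee c$. Conversely, applying Lemma \ref{Technical lemma adjunction} to $\mathbf{A}=\mathbf{Q}$, $a=\bar{x}$, $b=\bar{y}$ delivers a unique homomorphism $\psi\colon C(\mathbf{F}_{t\mathcal{DRL}}(1))\to \mathbf{Q}$ sending $z\vee c\mapsto \bar{x}$ and $\sim z\vee c\mapsto \bar{y}$.

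It remains to check that $\bar{\varphi}$ and $\psi$ are mutually inverse. The composite $\psi\circ\bar{\varphi}\colon \mathbf{Q}\to\mathbf{Q}$ fixes $\bar{x}$ and $\bar{y}$; since $\{\bar{x},\bar{y}\}$ generates $\mathbf{Q}$ as a tense ICRDL-algebra (it is a quotient of a free algebra on these two generators), we conclude $\psi\circ\bar{\varphi}=\mathrm{id}_{\mathbf{Q}}$. For the other composite, $\bar{\varphi}\circ\psi\colon C(\mathbf{F}_{t\mathcal{DRL}}(1))\to C(\mathbf{F}_{t\mathcal{DRL}}(1))$ sends $z\vee c$ to $z\vee c$ and $\sim z\vee c$ to $\sim z\vee c$, hence, by the uniqueness clause of Lemma \ref{Technical lemma adjunction} applied to $\mathbf{A}=C(\mathbf{F}_{t\mathcal{DRL}}(1))$ with $a=z\vee c$ and $b=\sim z\vee c$, it must coincide with $\mathrm{id}_{C(\mathbf{F}_{t\mathcal{DRL}}(1))}$.

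The main subtlety I anticipate is not the universal-property bookkeeping but rather confirming the identity $(z\vee c)\cdot (\sim z\vee c)=0$ in the \emph{ICRDL-reduct} $C(\mathbf{F}_{t\mathcal{DRL}}(1))$; one must be careful to distinguish the monoidal operation $\cdot$ on $C(A)$ from the underlying $\ast$ of the DRL-algebra, and to identify the ICRDL-zero with the centre $c$. Once this is in place the rest is a completely formal application of the adjunction $C\dashv K$ of Theorem \ref{equivalence theorem} together with Lemma \ref{Technical lemma adjunction}.
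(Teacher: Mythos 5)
Your proposal is correct and follows essentially the same route as the paper: both arguments reduce the theorem to the universal property of the quotient together with Lemma \ref{Technical lemma adjunction}, which exhibits $C(\mathbf{F}_{t\mathcal{DRL}}(1))$ as the representing object for pairs $(a,b)$ with $a\cdot b=0$. Your explicit verification that $(z\vee c)\cdot(\sim z\vee c)=c$ via Lemma \ref{lem: technical DRL}(1) and $z\ast\sim z=0$ is a welcome detail that the paper merely asserts.
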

\begin{proof}
In order to prove our claim, regard the assignment $\alpha(x)=z\vee c$ and $\alpha(y)=\sim z\vee c$ and let $h \colon \mathbf{B} \rightarrow C(\mathbf{F}_{t\mathcal{DRL}}(1))$ be the homomorphism that extends $\alpha$. Notice that for every $t(x,y)\in \mathbf{B}$, it is the case that $h(t(x,y))=t^{C(\mathbf{F}_{t\mathcal{DRL}}(1))}(z\vee c, \sim z\vee c)$. In particular we have that  $h(x\cdot y)= c = 0^{C(\mathbf{F}_{t\mathcal{DRL}}(1))}$. In order to prove our claim, we will show that $h$ is universal with respect to the homomorphisms $f \colon \mathbf{B} \rightarrow \mathbf{A}$ such that $f(x\cdot y)=0^{\mathbf{A}}$. To do so, if we consider such an $f$, then $f(x)\ast^{\mathbf{A}} f(y)=0^{\mathbf{A}}$, thus from Lemma \ref{Technical lemma adjunction}, there exists a unique homomorphism $\psi \colon C(\mathbf{F}_{t\mathcal{DRL}}(1))\rightarrow \mathbf{A}$ such that $\psi(z\vee c)=f(x)$ and $\psi(\sim z\vee c)=f(y)$. We will prove that the diagram below
\begin{displaymath}
\xymatrix{
\mathbf{B} \ar[r]^-{h} \ar[dr]_-{f} & C(\mathbf{F}_{t\mathcal{DRL}}(1)) \ar[d]^-{\psi} \\
& \mathbf{A}
}
\end{displaymath}
commutes. Indeed, if $t(x,y)\in \mathbf{B}$, then we have 
\begin{displaymath}
\begin{array}{ccl}
\psi h(t(x,y)) & = & \psi(t^{C(\mathbf{F}_{t\mathcal{DRL}}(1))}(z\vee c, \sim z\vee c)) \\
                   & = & t^{\mathbf{A}}(\psi(z\vee c), \psi(\sim z\vee c)) \\
                   & = &  t^{\mathbf{A}}(f(x),f(y)) \\
                   & = & f(t(x,y)). \\
\end{array}
\end{displaymath}
This concludes the proof.
\end{proof}

\begin{corollary} \label{coro: description free 2 x 2}
Let $X_2$ be the set $\{x^{1}_1,x^{1}_2, x^{2}_1, x^{2}_2\}$ and let $\mathbf{C}=\mathbf{F}_{t\mathcal{IRL}}(X_2)$. Then, if $\theta= \mathsf{Cg}^{\mathbf{C}}(x^{1}_1\cdot x^{2}_1, 0)\vee \mathsf{Cg}^{\mathbf{C}}(x^{1}_2\cdot x^{2}_2, 0)$, the algebras $C(\mathbf{F}_{t\mathcal{DRL}}(2))$ and $\mathbf{C}/\theta$ are isomorphic. 
\end{corollary}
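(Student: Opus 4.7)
The plan is to adapt the argument of Theorem~\ref{theorem C} to two variables, with Corollary~\ref{coro: ampliacion} playing the role that Lemma~\ref{Technical lemma adjunction} played there. Writing $x,y$ for the free generators of $\mathbf{F}_{t\mathcal{DRL}}(2)$, I would first consider the assignment $\alpha \colon X_2 \to C(\mathbf{F}_{t\mathcal{DRL}}(2))$ sending $x^{1}_1 \mapsto x \vee c$, $x^{2}_1 \mapsto {\sim} x \vee c$, $x^{1}_2 \mapsto y \vee c$, $x^{2}_2 \mapsto {\sim} y \vee c$, and extend it, via the freeness of $\mathbf{C}$, to a homomorphism $h \colon \mathbf{C} \to C(\mathbf{F}_{t\mathcal{DRL}}(2))$.

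The next step is to check that $h$ annihilates both generating pairs of $\theta$. Using the definition of the product of $C(\mathbf{F}_{t\mathcal{DRL}}(2))$ and Lemma~\ref{lem: technical DRL}(1), one computes
\[
h(x^{1}_1 \cdot x^{2}_1) = (x \vee c) \cdot ({\sim} x \vee c) = \bigl((x \vee c) \ast ({\sim} x \vee c)\bigr) \vee c = (x \ast {\sim} x) \vee c,
\]
and since $x \ast {\sim} x = {\sim}(x \Rightarrow x) = {\sim} 1 = 0$ in any DRL-algebra, this simplifies to $c$, which is the bottom element $0^{C(\mathbf{F}_{t\mathcal{DRL}}(2))}$; the same computation handles $x^{1}_2 \cdot x^{2}_2$. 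By the universal property of quotients, $h$ factors uniquely through the canonical projection $\nu \colon \mathbf{C} \to \mathbf{C}/\theta$ as $h = \bar h \nu$.

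Finally, I would establish that $h$ is universal with respect to homomorphisms out of $\mathbf{C}$ that send $x^{1}_i \cdot x^{2}_i$ to $0$ for $i=1,2$, which forces $\bar h$ to be an isomorphism. Given any tense ICDRL-algebra $\mathbf{A}$ and a homomorphism $f \colon \mathbf{C} \to \mathbf{A}$ with $f(x^{1}_i \cdot x^{2}_i) = 0$, one has $f(x^{1}_i) \cdot f(x^{2}_i) = 0$ for $i = 1,2$, so Corollary~\ref{coro: ampliacion} supplies a unique $\psi \colon C(\mathbf{F}_{t\mathcal{DRL}}(2)) \to \mathbf{A}$ with $\psi(x \vee c) = f(x^{1}_1)$, $\psi({\sim} x \vee c) = f(x^{2}_1)$, $\psi(y \vee c) = f(x^{1}_2)$ and $\psi({\sim} y \vee c) = f(x^{2}_2)$. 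Since $\psi h$ and $f$ are both homomorphisms out of the free algebra $\mathbf{C}$ that agree on the generating set $X_2$, they coincide, which combined with the factorization of the previous paragraph yields the universal property characterizing $\mathbf{C}/\theta$.

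The only step that really requires care is the verification $h(x^{1}_i \cdot x^{2}_i) = c$, since it simultaneously invokes the definition of the induced product in $C(\mathbf{F}_{t\mathcal{DRL}}(2))$ and part~(1) of Lemma~\ref{lem: technical DRL}; the remainder is a mechanical two-variable transcription of Theorem~\ref{theorem C}.
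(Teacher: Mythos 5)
Your argument is correct, but it follows a genuinely different route from the paper. The paper deduces the corollary from Theorem~\ref{theorem C} by a purely categorical computation: it writes $\mathbf{F}_{t\mathcal{DRL}}(2)$ as the coproduct $\mathbf{F}_{t\mathcal{DRL}}(1)+\mathbf{F}_{t\mathcal{DRL}}(1)$, uses that $C$ preserves coproducts (being a left adjoint, since $C\dashv K$), applies Theorem~\ref{theorem C} to each summand, and then invokes the standard presentation of a coproduct of quotients of free algebras to identify $\mathbf{B}/\delta+\mathbf{B}/\delta$ with $\mathbf{C}/\theta$. You instead replay the proof of Theorem~\ref{theorem C} directly in two variables: you build the homomorphism $h\colon \mathbf{C}\to C(\mathbf{F}_{t\mathcal{DRL}}(2))$ on generators, check via Lemma~\ref{lem: technical DRL}(1) and $x\ast{\sim}x={\sim}(x\Rightarrow x)=0$ that both generating pairs of $\theta$ are collapsed, and verify the universal property using Corollary~\ref{coro: ampliacion} in place of Lemma~\ref{Technical lemma adjunction} (the uniqueness clause of that corollary also gives the uniqueness of the factoring map $\psi$, which you leave implicit but which is needed to conclude that $\bar h$ is an isomorphism). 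The trade-off is clear: the paper's argument is shorter once one accepts the preservation of coproducts and the description of coproducts of finitely presented algebras, and it generalizes to $n$ generators with no extra work; your argument is self-contained and more elementary, exhibits the isomorphism explicitly on generators, but duplicates the computation already done for Theorem~\ref{theorem C}. Both produce the same isomorphism, so either proof is acceptable.
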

\begin{proof}
Let $\mathbf{B}=\mathbf{F}_{t\mathcal{ICRL}}(2)$ and $\delta= \mathsf{Cg}^{\mathbf{F}_{t\mathcal{IRL}}(2)}(x\cdot y, 0)$. Then, from Theorem \ref{theorem C}, the fact that $C$ and $\mathbf{F}_{t\mathcal{DRL}}$ preserve coproducts, and the well known description on coproducts of algebras, the following holds:
\begin{displaymath}
\begin{array}{rcl}
C(\mathbf{F}_{t\mathcal{DRL}}(2)) & \cong &  C(\mathbf{F}_{t\mathcal{DRL}}(1)) + C(\mathbf{F}_{t\mathcal{DRL}}(1)) \\
                                                       & \cong & \mathbf{B}/\delta + \mathbf{B}/\delta  \\
                                                       & \cong & \mathbf{C}/\mathsf{Cg}^{\mathbf{C}}((x^{1}_1\cdot x^{2}_1, 0),(x^{1}_2\cdot x^{2}_2, 0))  \\
                                                       & \cong & \mathbf{C}/\theta, 
\end{array}
\end{displaymath}
i.e., $C(\mathbf{F}_{t\mathcal{DRL}}(2))$ and $\mathbf{C}/\theta$ are isomorphic.
\end{proof}

Next, we describe the procedure to obtain the contextual 2-translation associated to the adjunction $C\dashv K$. If $\psi$ is a $n$-ary  function symbol on the language of tense DRL-algebras, by Theorem 4.3 of \cite{M2016}, the 2-translation $\tau$ is determined by a homomorphism $\tau(\psi)$ that makes the following diagram commutes: 
\begin{displaymath}
\xymatrix{
& & \ar@{-->}[ddll]_-{\tau(\psi)} \mathbf{F}_{tICRL}(2) \ar[d]^-{h} \\
& & C(\mathbf{F}_{tDRL}(1)) \ar[d]^-{C(\psi)} \\
\mathbf{F}_{tICRL}(X_n) \ar[rr]_-{\pi_n} & & C(\mathbf{F}_{tDRL}(n))
}
\end{displaymath}
We stress that the existence of $\tau(\psi)$ is granted because $\mathbf{F}_{tICRL}(2)$ is onto-projective in $\mathsf{tICRL}$. Furthermore, the map $\tau \colon \mathcal{L}_{tDRL}\rightarrow \mathcal{L}_{tICRL}^{2}$ can be identified with its values on the generators. I.e. $\tau(\psi) := (\tau(\psi)(x_{1}^{1}),\tau(\psi)(x_{1}^{2}))$. Notice that in order to find such a map, it is enough to find terms $t(x_{1}^{1},x_{1}^{2},\ldots, x_{n}^{1},x_{n}^{2})$ and $s(x_{1}^{1},x_{1}^{2},\ldots, x_{n}^{1},x_{n}^{2})$ in $\mathbf{F}_{tICRL}(X_n)$ such that $C(\psi)(h(x))=h(t(x_{1}^{1},x_{1}^{2},\ldots, x_{n}^{1},x_{n}^{2}))$ and $C(\psi)(h(y))=h(s(x_{1}^{1},x_{1}^{2},\ldots, x_{n}^{1},x_{n}^{2}))$. In the next result we will see that in our case, such terms can be obtained from the operations we defined in Section \ref{sec5} at the moment of establishing the functor $K$. 

\begin{lemma}\label{lem: 2-translation}
The adjunction $C\dashv K$ induces the 2-translation  $\tau \colon \mathcal{L}_{tDRL}\rightarrow \mathcal{L}_{tICRL}^{2}$ defined as follows:
\begin{displaymath}
\begin{array}{rcl}
\tau(\wedge)((x_{1}^{1},x_{1}^{2}), (x_{2}^{1},x_{2}^{2})) & := & (x_{1}^{1}\wedge x_{2}^{1},x_{1}^{2}\vee x_{2}^{2})\\
\tau(\vee)((x_{1}^{1},x_{1}^{2}), (x_{2}^{1},x_{2}^{2})) & := & (x_{1}^{1}\vee x_{2}^{1},x_{1}^{2}\wedge x_{2}^{2})\\
\tau(\ast)((x_{1}^{1},x_{1}^{2}), (x_{2}^{1},x_{2}^{2})) & := & (x_{1}^{1}\cdot x_{2}^{1}, (x_{1}^{1}\rightarrow x_{2}^{2})\wedge (x_{2}^{1}\rightarrow x_{1}^{2}))\\
\tau(\Rightarrow)((x_{1}^{1},x_{1}^{2}), (x_{2}^{1},x_{2}^{2})) & := & ((x_{1}^{1}\rightarrow x_{2}^{1})\wedge (x_{2}^{2}\rightarrow x_{1}^{2}), x_{1}^{1}\cdot x_{2}^{2})\\
\tau(G)(x_{1}^{1},x_{1}^{2}) & := & (Gx_{1}^{1},Fx_{1}^{2})\\
\tau(F)(x_{1}^{1},x_{1}^{2})& := & (Fx_{1}^{1},Gx_{1}^{2})\\
\tau(H)(x_{1}^{1},x_{1}^{2}) & := & (Hx_{1}^{1},Px_{1}^{2})\\
\tau(P)(x_{1}^{1},x_{1}^{2}) & := & (Px_{1}^{1},Hx_{1}^{2})\\
\tau(\sim)(x_{1}^{1},x_{1}^{2}) & := & (x_{1}^{2},x_{1}^{1})\\
\tau(c) & := & (0,0)\\
\tau(0) & := & (0,1)\\
\tau(1) & := & (1,0).\\
\end{array}
\end{displaymath}    
\end{lemma}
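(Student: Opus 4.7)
The plan is to invoke Theorem 4.3 of \cite{M2016}, which tells us that for each $n$-ary symbol $\psi \in \mathcal{L}_{tDRL}$ the map $\tau(\psi)$ is characterised as a homomorphism $\mathbf{F}_{tICRL}(2) \to \mathbf{F}_{tICRL}(X_n)$ making the diagram in the statement commute. Since $\mathbf{F}_{tICRL}(2)$ is free on the two generators $x, y$, to specify $\tau(\psi)$ it suffices to exhibit terms $t, s \in \mathbf{F}_{tICRL}(X_n)$ with $\pi_n(t) = C(\psi)(h(x))$ and $\pi_n(s) = C(\psi)(h(y))$, and then declare $\tau(\psi)(x) = t$ and $\tau(\psi)(y) = s$.

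Let $z$ and $z_1, \ldots, z_n$ denote the free generators of $\mathbf{F}_{tDRL}(1)$ and $\mathbf{F}_{tDRL}(n)$ respectively. By Theorem \ref{theorem C}, $h$ is the homomorphism sending $x \mapsto z \vee c$ and $y \mapsto \sim z \vee c$, while by Corollary \ref{coro: description free 2 x 2} the map $\pi_n$ sends $x_i^1 \mapsto z_i \vee c$ and $x_i^2 \mapsto \sim z_i \vee c$. Moreover, $C(\psi)$ is $C$ applied to the canonical homomorphism $\mathbf{F}_{tDRL}(1) \to \mathbf{F}_{tDRL}(n)$, $z \mapsto \psi(z_1, \ldots, z_n)$. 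The required commutation conditions thus become
\begin{align*}
\pi_n(t) &= \psi(z_1, \ldots, z_n) \vee c, \\
\pi_n(s) &= \sim \psi(z_1, \ldots, z_n) \vee c.
\end{align*}

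To compute these right-hand sides I would apply the monomorphism $\beta \colon \mathbf{F}_{tDRL}(n) \to K(C(\mathbf{F}_{tDRL}(n)))$ from Theorem \ref{equivalence theorem}, which is defined by $\beta(w) = (w \vee c, \sim w \vee c)$ and therefore records both required quantities as the coordinates of $\beta(\psi(z_1, \ldots, z_n))$. Being a homomorphism of tense DRL-algebras, $\beta$ yields
\[
\beta(\psi(z_1, \ldots, z_n)) = \psi^{K}\bigl((z_1 \vee c, \sim z_1 \vee c), \ldots, (z_n \vee c, \sim z_n \vee c)\bigr),
\]
where $\psi^{K}$ is interpreted via the explicit formulas for the operations of a Kalman construction recorded in Section \ref{sec5}. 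Since each such formula provides both coordinates as polynomial expressions in the coordinates of the inputs, substituting $x_i^1$ for $z_i \vee c$ and $x_i^2$ for $\sim z_i \vee c$ produces the desired $t$ (from the first coordinate of $\psi^K$) and $s$ (from the second).

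The reading-off is purely mechanical. For instance, from $(a, b) \ast (x, y) = (a \cdot x, (a \to y) \wedge (x \to b))$ of Section \ref{sec5} one recovers $\tau(\ast)((x_1^1, x_1^2), (x_2^1, x_2^2)) = (x_1^1 \cdot x_2^1, (x_1^1 \to x_2^2) \wedge (x_2^1 \to x_1^2))$, exactly as claimed; the other binary operations and $\sim$ follow from the corresponding formulas, the constants $0, 1, c$ from the identifications $0 = (0,1)$, $1 = (1,0)$, $c = (0,0)$, and the tense operators $G, H, F, P$ directly from the definitions of $G_K, H_K, F_K, P_K$. The only real obstacle is organisational, namely keeping the naming conventions straight across the three free algebras, the map $\beta$ and the quotient $\pi_n$; no conceptual difficulty arises.
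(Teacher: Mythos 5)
Your argument is correct and follows the same overall scheme as the paper's proof: both set up the commuting triangle coming from Theorem 4.3 of \cite{M2016} and reduce the problem to exhibiting, for each symbol $\psi$, terms $t,s$ with $\pi_n(t)=\psi(\vec z\,)\vee c$ and $\pi_n(s)=\,\sim\psi(\vec z\,)\vee c$. Where you genuinely diverge is in how those identities are verified. The paper proceeds case by case: for the tense operators it computes $h(F(x_1^2))=C(G)(h(x_1^2))$ directly from (t1), (t2) and Proposition \ref{propiedades tDRLc}, and for $\ast$ it reduces the two required equalities to the explicit DRL-computations of Lemma \ref{lem: technical DRL}, which rest on the Leibniz condition. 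You instead observe that $\beta_{\mathbf A}(w)=(w\vee c,\sim w\vee c)$ records both target elements simultaneously, so the homomorphism property of $\beta$ in $\mathsf{tDRL}$ gives $\beta(\psi(\vec z\,))=\psi^{K}(\beta(z_1),\dots,\beta(z_n))$ and the terms $t,s$ can be read off coordinatewise from the Kalman formulas of Section \ref{sec5}. This buys uniformity --- a single argument covering all twelve symbols --- at the price of leaning on the fact that $\beta$ preserves $\ast$ and $\Rightarrow$, which is precisely where the content of Lemma \ref{lem: technical DRL} is hidden (for the non-tense reduct it is Lemma 7.5 of \cite{CMS}, and the paper's own lemma extends it to $G,H$). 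Since that fact is established before the present lemma, there is no circularity. The one step you should spell out rather than wave at is the identification of $\pi_n$ with $\lambda\varphi_{\theta}$ together with the normalisation $\lambda(x_i^{1}/\theta)=z_i\vee c$ and $\lambda(x_i^{2}/\theta)=\,\sim z_i\vee c$ granted by Corollary \ref{coro: ampliacion}; this is what makes the substitution of $x_i^{1}$ for $z_i\vee c$ and $x_i^{2}$ for $\sim z_i\vee c$ legitimate, and the paper devotes an explicit diagram to it.
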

\begin{proof}  
We start by recalling that $C(G) \colon C(\mathbf{F}_{tDRL}(1))\rightarrow C(\mathbf{F}_{tDRL}(1))$ maps a term $u(z)$ into $u(G(z))$ and by Theorem \ref{theorem C}, $h \colon \mathbf{F}_{tICRL}(2)\rightarrow C(\mathbf{F}_{tDRL}(1))$ maps a term $t(x_{1}^{1},x_{1}^{2})$ into $t^{C(\mathbf{F}_{t\mathcal{DRL}}(1))}(z\vee c, \sim z\vee c)$. Now, let us consider $t(x_{1}^{1},x_{1}^{2})=G(x_{1}^{1})$ and $s(x_{1}^{1},x_{1}^{2})=F(x_{1}^{2})$. Observe that from Proposition \ref{propiedades tDRLc}, it is the case that $h(G(x_{1}^{1}))=C(G)(h(x_{1}^{1}))$. On the other hand, from Definition \ref{def: tDRL} (t1) and (t2), we have 
\begin{displaymath}
\begin{array}{rcl}
h(F(x_{1}^{2})) & = & F(\sim z\vee c) \\
                        & = & \sim G(z\wedge c) \\
                        & = & \sim (G(z)\wedge c) \\
                        & = & \sim G(z)\vee c \\
                        & = & C(G)(\sim z\vee c) \\
                        & = & C(G)(h(x_{1}^{2})). \\
\end{array}
\end{displaymath}
Hence, we can set $\tau(G)(x_{1}^{1},x_{1}^{2}) = (Gx_{1}^{1},Fx_{1}^{2})$ as claimed. The proof of the statements about $\tau(H)$, $\tau(F)$,  $\tau(P)$ and $\tau(\sim)$ is similar.  Now, in order to show that 
\[
\tau(\ast)((x_{1}^{1},x_{1}^{2}), (x_{2}^{1},x_{2}^{2})) = (x_{1}^{1}\cdot x_{2}^{1}, (x_{1}^{1}\rightarrow x_{2}^{2})\wedge (x_{2}^{1}\rightarrow x_{1}^{2})),
\] 
we will proceed as next we will describe. Let us consider the following diagram 
\begin{displaymath}
\xymatrix{
& & \mathbf{F}_{t\mathcal{IRL}}(2) \ar[d]^-{h} \ar@/_/@{-->}[ddll]_-{\tau(\ast)}\\
& & C(\mathbf{F}_{t\mathcal{DRL}}(1)) \ar[d]^-{C(\ast)}\\
\mathbf{F}_{t\mathcal{IRL}}(X_2) \ar[r]^-{\varphi_{\theta}} \ar@/_1.5pc/[rr]_-{\pi_2} & \mathbf{F}_{t\mathcal{IRL}}(X_2)/\theta \ar[r]^-{\lambda}  & C(\mathbf{F}_{t\mathcal{DRL}}(2)) 
}  
\end{displaymath}
in which $\theta$ denotes  $\mathsf{Cg}^{\mathbf{F}_{tICRL}(X_2)}(x_1\cdot y_1,0)\vee \mathsf{Cg}^{\mathbf{F}_{tICRL}(X_2)}(x_2\cdot y_2,0)$, $\varphi_{\theta}$ is its respective quotient homomorphism and $\lambda$ is the isomorphism of  Corollary \ref{coro: description free 2 x 2}. Having into account that $\pi_2=\lambda \varphi_{\theta}$, it is clear that proving $\pi_2\tau(\ast)=C(\ast)h$ is equivalent to proof that $\lambda \varphi_{\theta}\tau(\ast)=C(\ast)h$. Thus, since $C(\ast) \colon C(\mathbf{F}_{tDRL}(1))\rightarrow C(\mathbf{F}_{tDRL}(2))$ maps a term $u(z)$ into $u(x\ast y)$, if $s(x^{1}_1,x^{2}_1,x^{1}_2,x^{2}_2)$ and $t(x^{1}_1,x^{2}_1,x^{1}_2,x^{2}_2)$ are terms of $F_{tICRL}(X_2)$ such that $\pi_2(s(x^{1}_1,x^{2}_1,x^{1}_2,x^{2}_2))=C(\ast)(h(x))$ and $\pi_2(t(x^{1}_1,x^{2}_1,x^{1}_2,x^{2}_2))=C(\ast)(h(y))$ the latter equation leads us to prove the following 
\begin{displaymath}
\begin{array}{c}
s^{C(\mathbf{F}_{tDRL}(2))}(\lambda(x^{1}_1/\theta),\lambda(x^{2}_1/\theta),\lambda(x^{1}_2/\theta),\lambda(x^{2}_2/\theta))= (x\ast y)\vee c, \\
\\
t^{C(\mathbf{F}_{tDRL}(2))}(\lambda(x^{1}_1/\theta),\lambda(x^{2}_1/\theta),\lambda(x^{1}_2/\theta),\lambda(x^{2}_2/\theta))= \sim (x\ast y)\vee c.
\end{array}   
\end{displaymath}
Now, since $x^{1}_1/\theta\cdot x^{2}_1/\theta=0/\theta$ and $x^{1}_2/\theta\cdot x^{2}_2/\theta=0/\theta$, from Corollary \ref{coro: ampliacion}, we can assume w.l.o.g. that $\lambda(x^{1}_1/\theta)=x\vee c$, $\lambda(x^{1}_2/\theta)=y\vee c$, $\lambda(x^{2}_1/\theta)=\sim x\vee c$ and $\lambda(x^{2}_2/\theta)=\sim y\vee c$. Therefore, by taking  $s(x^{1}_1,x^{2}_1,x^{1}_2,x^{2}_2)=x^{1}_1\cdot x^{1}_2$,  $t(x^{1}_1,x^{2}_1,x^{1}_2,x^{2}_2)=(x_{1}^{1}\rightarrow x_{2}^{2})\wedge (x_{2}^{1}\rightarrow x_{1}^{2})$ and having in mind that $\lambda$ is a homomorphism, in order to prove our claim, we shall prove the following
\begin{displaymath}
\begin{array}{rcl}
\lambda(x^{1}_1/\theta)\cdot \lambda(x^{1}_2/\theta)& = &(x\ast y)\vee c,  \\
\end{array}
\end{displaymath}
\begin{displaymath}
\begin{array}{rcl}
(\lambda(x_{1}^{1}/\theta)\Rightarrow \lambda(x_{2}^{2}/\theta)) \wedge  (\lambda(x_{2}^{1}/\theta) \Rightarrow \lambda(x_{1}^{2}/\theta)) & = & \sim (x\ast y)\vee c.
\end{array}
\end{displaymath}
But notice that these are precisely the equations (1) and (2) of Lemma \ref{lem: technical DRL}. So 
\[
\tau(\ast)((x_{1}^{1},x_{1}^{2}), (x_{2}^{1},x_{2}^{2})) =  (x_{1}^{1}\cdot x_{2}^{1}, (x_{1}^{1}\rightarrow x_{2}^{2})\wedge (x_{2}^{1}\rightarrow x_{1}^{2})),
\]
as desired. The proof of the statements about $\tau(\Rightarrow)$, $\tau(\vee)$ and $\tau(\wedge)$ are analogue. The details on the proof of $\tau(c)$, $\tau(0)$ and $\tau(1)$ are left to the reader. 
\end{proof}

In addition to Lemma \ref{lem: 2-translation}, we stress that from Theorem \ref{theorem C}, we may identify the generating set $\{(x\cdot y,0)\}$ with the set equations $\Theta=\{x\cdot y\approx 0\}$. Then, from Theorem 4.3 of \cite{M2018}, it turns out that the pair $(\tau, \Theta)$ is a 2-contextual translation 
of $\models_{tDRL}$ into $\models_{tICRL}$. Further is true. Since $C$ is faithful (Theorem \ref{equivalence theorem}), then by Lemma 6.4 of \cite{M2016} we are able to conclude:

\begin{theorem}
For every cardinal $\lambda$ and $\Phi \cup \{ \varepsilon \approx \delta\}\subseteq \mathrm{Eq}(\mathcal{L}_{tDRL},\lambda)$, 
\[
\Phi \models_{tDRL} \varepsilon \approx \delta \Longleftrightarrow \tau_{\ast}(\Phi)\cup \{(x_j^{1}\cdot x_j^{2},x_j^{1}\cdot x_j^{2})\approx (0,0)\colon j<\lambda \}\models_{tICRL}\tau_{\ast}(\varepsilon \approx \delta).
\]
where $(\tau, \Theta)$ is the contextual translation of $\models_{tDRL}$ into $\models_{tICRL}$ induced by $C$.
\end{theorem}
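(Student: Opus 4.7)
The plan is to derive the biconditional from the general theory of contextual translations induced by adjunctions, combined with the categorical equivalence established in Theorem \ref{equivalence theorem}. The proof splits naturally into two independent implications, and most of the technical groundwork is already in place in the previous lemmas of this section.

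First, I would dispose of the forward implication. By Lemma \ref{lem: 2-translation}, the map $\tau$ is precisely the 2-translation of languages induced by the adjunction $C \dashv K$ in the sense of Theorem 4.3 of \cite{M2018}. Moreover, Theorem \ref{theorem C} identifies $C(\mathbf{F}_{t\mathcal{DRL}}(1))$ with $\mathbf{F}_{t\mathcal{ICRL}}(2)/\mathsf{Cg}(x\cdot y,0)$, so the context $\Theta=\{x\cdot y\approx 0\}$ is exactly the generating set of equations presenting this quotient. Therefore $(\tau,\Theta)$ satisfies the defining clauses of a 2-contextual translation, so for every cardinal $\lambda$ and every $\Phi\cup\{\varepsilon\approx\delta\}\subseteq \mathrm{Eq}(\mathcal{L}_{tDRL},\lambda)$,
\[
\Phi \models_{tDRL} \varepsilon \approx \delta \;\Longrightarrow\; \tau_{\ast}(\Phi)\cup \bigcup_{j<\lambda}\Theta(x_j^{1},x_j^{2}) \models_{tICRL} \tau_{\ast}(\varepsilon\approx\delta).
\]

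For the converse, I would invoke Lemma 6.4 of \cite{M2016}. Since Theorem \ref{equivalence theorem} exhibits $C$ as one half of an equivalence of categories (with natural isomorphisms $\alpha$ and $\beta$), it is in particular \emph{faithful}. The cited lemma then guarantees that whenever the left adjoint of an adjunction between algebraic categories is faithful, the induced contextual translation is \emph{conservative}, meaning that the implication above can be strengthened to a biconditional. Combining both halves yields the desired equivalence.

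The main subtle point, and the only place that requires genuine verification rather than citation, is to confirm that the 2-translation exhibited by Lemma \ref{lem: 2-translation} is indeed the one associated with the adjunction in the precise form demanded by Lemma 6.4 of \cite{M2016}. This amounts to checking that our explicit descriptions of $C(\mathbf{F}_{t\mathcal{DRL}}(1))$ and $C(\mathbf{F}_{t\mathcal{DRL}}(2))$ (Theorem \ref{theorem C} and Corollary \ref{coro: description free 2 x 2}) match the universal presentations required there, which is essentially bookkeeping since those results already supply the explicit generators and relations needed to align our setting with the conventions of \cite{M2016,M2018}.
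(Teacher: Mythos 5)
Your proposal is correct and follows essentially the same route as the paper: the forward implication is obtained by recognizing $(\tau,\Theta)$ as the 2-contextual translation induced by $C\dashv K$ via Theorem 4.3 of \cite{M2018} (using Lemma \ref{lem: 2-translation} and Theorem \ref{theorem C}), and the converse follows from the faithfulness of $C$ guaranteed by Theorem \ref{equivalence theorem} together with Lemma 6.4 of \cite{M2016}. The paper's own proof is exactly this two-step citation argument, so no further comparison is needed.
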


\end{document}